\documentclass[11pt, reqno]{amsart}
\usepackage{lineno,hyperref}
\usepackage{cases}
\usepackage[square, comma, sort&compress, numbers]{natbib}%
\usepackage{float}
\usepackage{pifont}
\usepackage{bbding}
\usepackage{amssymb}
\usepackage{mathrsfs}
\usepackage{subfigure}
\usepackage{caption}
\usepackage{geometry}
\usepackage{color}
\usepackage{amsmath, amsthm, amscd, amsfonts, amssymb, graphicx, color}
\usepackage{lineno}
\usepackage{graphicx}
\usepackage{subfigure}
\textheight 21.78truecm \textwidth 14.5truecm
\setlength{\oddsidemargin}{0.35in}\setlength{\evensidemargin}{0.35in}
\bibliographystyle{alpha}
\setlength{\topmargin}{-.5cm}

\newtheorem{theorem}{Theorem}
\newtheorem{thm}{Theorem}
\newtheorem{lemma}{Lemma}

\newtheorem{corollary}{Corollary}

\newtheorem{remark}{Remark}

\numberwithin{equation}{section}
\numberwithin{equation}{section}
\numberwithin{dfn}{section}
\numberwithin{lemma}{section}
\numberwithin{theorem}{section}
\numberwithin{thm}{section}
\numberwithin{corollary}{section}
\numberwithin{prop}{section}
\numberwithin{remark}{section}

\newcommand{\abs}[1]{\left\vert#1\right\vert}

\date{\today}

\begin{document}
\setcounter{page}{1}

\title[On quasiconformal extensions of harmonic mappings]
{On quasiconformal extensions of harmonic mappings associated with pre-Schwarzian derivative}

\author[X.-Y. Wang, J.-H. Fan,  Z.-Y. Hu and Z.-G. Wang]{Xiao-Yuan Wang, Jin-Hua Fan,  Zhen-Yong Hu  and Zhi-Gang Wang$^{*}$}

\vskip.10in
\address{\noindent Xiao-Yuan Wang \vskip.05in
 School of Science, Nanjing University of Science and Technology,
 Nanjing 210094, Jiangsu, P. R.
China.}
%
\email{\textcolor[rgb]{0.00,0.00,0.84}{mewangxiaoyuan$@$163.com}}

\address{\noindent Jin-Hua Fan\vskip.05in
 School of Science, Nanjing University of Science and Technology,
 Nanjing 210094, Jiangsu, P. R. China.}\vskip.05in
\email{\textcolor[rgb]{0.00,0.00,0.84}{jinhuafan$@$hotmail.com}}

\address{\noindent Zhen-Yong Hu\vskip.05in
 School of Science, Nanjing University of Science and Technology,
 Nanjing 210094, Jiangsu, P. R. China.}\vskip.05in
\email{\textcolor[rgb]{0.00,0.00,0.84}{huzhenyongad$@$163.com}}

\address{\noindent Zhi-Gang Wang\vskip.05in
School of Mathematics and Statistics, Hunan
	First Normal University, Changsha 410205, Hunan, P. R. China.}
\vskip.05in
\email{\textcolor[rgb]{0.00,0.00,0.84}{wangmath$@$163.com}}
\thanks{$^*$Corresponding author.}

\subjclass[2010]{Primary 30C62, 30C55; Secondary 31A05.}

\keywords{Quasiconformal mapping, harmonic mapping,  Teichm\"{u}ller mapping, pre-Schwarzian derivative.}

\date{\today}

\begin{abstract}
In this paper, we extend Ahlfors's univalent criteria and Ahlfors's quasiconformal extension for analytic functions to harmonic mappings defined in the unit disk. Moreover, we give
a general quasiconformal extension of harmonic Teichm\"{u}ller mappings, whose maximal dilatation estimate is
asymptotically sharp.
\end{abstract}
\maketitle

\section{Introduction and statements of the main results}\label{sec-1}
Let $\triangle:=\{z:|z|<1\}$ be the unit disk in the   complex plane $\mathbb{C}$. For a locally univalent analytic function  $\phi$ in $\triangle$, we denote by $P_\phi$ the pre-Schwarzian derivative  and $S_{\phi}$ the Schwarzian derivative of $\phi$ as follows:
\begin{equation*}
\begin{aligned}
& P_\phi=\frac{\phi^{\prime \prime}}{\phi^{\prime}}\ \ {\rm and}\  \ S_{\phi}=\left(P_{\phi}\right)^{\prime}-\frac{1}{2}\left(P_{\phi}\right)^{2}.
\end{aligned}
\end{equation*}
For  $P_\phi$ and $S_{\phi}$, we have the following two norms:
$$
\|P_{\phi}\|=\sup _{z \in \triangle}|P_{\phi}| (1-|z|^2),
$$
and
$$
\left\|S_{\phi}\right\|=\sup _{z \in \triangle}\left|S_{\phi}\right| (1-|z|^2)^2.
$$

In 1972, Becker \cite{Becker-1972} stated that if
\begin{equation}\label{eq-1.01}
 \|P_{\phi}\| \leq 1,
\end{equation}
then $\phi$ is univalent in $\triangle$. Moreover, the constant 1 is sharp (see \cite{Becker-1984}). Becker \cite{Becker-1972} also proved that if
\begin{equation}\label{eq-1.02}
\|P_{\phi}\| \leq k<1,
\end{equation}
then not only $\phi$ is univalent, but also it exists a continuous extension $\widetilde{\phi}$ to $\overline{\triangle}$ and $\widetilde{\phi}(\partial \triangle)$ is a quasicircle.

Indeed, Becker \cite{Becker-1972} proved that $\phi$ has a $K$-quasiconformal extension to   $\mathbb{C}$ if \eqref{eq-1.02} holds. We call \eqref{eq-1.01} and \eqref{eq-1.02} are Becker's univalence criterion and Becker's extended univalence criterion, respectively.
In 1974, Ahlfors gave a generalized  result  expressed as follows.
\begin{thm}\label{thm-Ahlfors-1974-0}
Let $\phi$ be a locally univalent analytic function in $\triangle$.  $\sigma$ is a continuous function in $\triangle$, which satisfies the following conditions: (i) $\sigma_{z}$ and $\sigma_{\overline{z}}$ exist in $\triangle$ a.e.; (ii) $\frac{1}{\sigma}=0$ on $\partial \triangle$; (iii) $\frac{\sigma_{\overline{z}}}{\sigma^{2}} \neq 0$ in $\overline{\triangle}$. Then the inequality
\begin{equation}\label{eq-Ahlfors-1974-0}
\left|\sigma P_{\phi}+\sigma^{2}-\sigma_{z}\right| \leq  \left|\sigma_{\overline{z}}\right| \ \ (z\in \triangle)
\end{equation}
is sufficient to imply that $\phi$ is univalent in $\triangle$.
\end{thm}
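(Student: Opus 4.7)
The plan is to produce an explicit continuous extension $F$ of $\phi$ across $\partial\triangle$ by reflecting with the aid of $\sigma$, to compute its complex dilatation using the hypothesis \eqref{eq-Ahlfors-1974-0}, and to deduce the univalence of $\phi$ from the injectivity of the extension. Concretely, I would set
\begin{equation*}
F(z)=\begin{cases}\phi(z), & z\in\overline{\triangle},\\[3pt] \phi\bigl(1/\overline{z}\bigr)+\dfrac{\phi'\bigl(1/\overline{z}\bigr)}{\sigma\bigl(1/\overline{z}\bigr)}, & z\in\mathbb{C}\setminus\overline{\triangle}.\end{cases}
\end{equation*}
Continuity of $F$ across $\partial\triangle$ is automatic from condition (ii): on $|z|=1$ one has $1/\overline{z}=z$ and $1/\sigma(z)=0$, so both pieces agree and equal $\phi(z)$. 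Local univalence of $\phi$ makes $F|_{\triangle}$ a local homeomorphism.

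Next, I would compute the Beltrami coefficient $\mu_F=F_{\overline{z}}/F_z$ on the exterior disc. Writing $w=1/\overline{z}$ (so $w_z=0$ and $w_{\overline{z}}=-1/\overline{z}^{\,2}$) and recording
\begin{equation*}
\partial_w\!\Bigl(\phi(w)+\tfrac{\phi'(w)}{\sigma(w)}\Bigr)=\frac{\phi'(w)}{\sigma(w)^2}\bigl(\sigma^2+\sigma P_\phi-\sigma_w\bigr),\qquad \partial_{\overline{w}}\!\Bigl(\phi(w)+\tfrac{\phi'(w)}{\sigma(w)}\Bigr)=-\frac{\phi'(w)\,\sigma_{\overline{w}}}{\sigma(w)^2},
\end{equation*}
the chain rule gives, after cancelling the unit factor $|z|^2/|\overline{z}|^2$,
\begin{equation*}
|\mu_F(z)|=\frac{|\sigma^2+\sigma P_\phi-\sigma_w|}{|\sigma_{\overline{w}}|}\bigg|_{w=1/\overline{z}}.
\end{equation*}
The denominator is nonzero by condition (iii), and \eqref{eq-Ahlfors-1974-0} applied at $w\in\triangle$ forces $|\mu_F|\le 1$ throughout $\mathbb{C}\setminus\overline{\triangle}$.

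The principal obstacle I anticipate is that this bound is not necessarily strict, so $F$ is not automatically a quasiconformal homeomorphism of $\mathbb{C}$, and injectivity of $\phi$ does not follow instantly. To get past this I would deploy an approximation/exhaustion argument: for each $r\in(0,1)$, apply the same construction to the contracted data $\phi_r(z):=\phi(rz)$ together with a modified auxiliary function $\sigma_r$, chosen so that conditions (i)--(iii) still hold while the analogue of \eqref{eq-Ahlfors-1974-0} is sharpened to $|\sigma_r^2+\sigma_r P_{\phi_r}-(\sigma_r)_z|\le k(r)\,|(\sigma_r)_{\overline{z}}|$ with $k(r)<1$ on a disc slightly larger than $\triangle$. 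The resulting extension $F_r$ is then genuinely $K_r$-quasiconformal with $K_r=(1+k(r))/(1-k(r))<\infty$, hence a homeomorphism of $\mathbb{C}$, so each $\phi_r$ is univalent on $\triangle$; letting $r\to 1^-$ and invoking a standard normal-families argument delivers the univalence of $\phi$ itself.
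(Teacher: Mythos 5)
A preliminary remark: the paper does not prove Theorem \ref{thm-Ahlfors-1974-0} at all --- it is quoted from Ahlfors \cite{Ahlfors-1974} --- so your attempt can only be judged on its own merits. Your reflection construction $F(z)=\phi(1/\overline{z})+\phi'(1/\overline{z})/\sigma(1/\overline{z})$ and the computation of the Beltrami coefficient are correct, and indeed coincide with the computations the authors carry out for their harmonic analogues in \eqref{eq-2.05} and \eqref{eq-uf}. However, the proposal has two genuine gaps, and they are precisely where the mathematical content of the theorem lies.

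First, from ``$|\mu_{F_r}|\le k_r<1$ off $\partial\triangle$'' you conclude that $F_r$ ``is then genuinely $K_r$-quasiconformal, hence a homeomorphism of $\mathbb{C}$.'' This inference is the whole of Theorem \ref{thm-Ahlfors-1974-1}, not a formality. Quasiconformality presupposes that the map is a homeomorphism; a dilatation bound plus local injectivity off a circle does not yield global injectivity (a continuous map of the sphere that is a local homeomorphism away from a circle can still fold along that circle, and degree-theoretic arguments require properness and injectivity to be established, not assumed). Ahlfors's proof of injectivity of the extension is a separate, nontrivial argument; if you intend to invoke Theorem \ref{thm-Ahlfors-1974-1} as a black box you must say so, and then the logical content of your proof reduces entirely to the second step.

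Second, that second step --- the reduction of the borderline case $k=1$ to the strict case $k<1$ --- is asserted but not performed, and it is not clear it can be performed in the stated generality. You need, for each $r<1$, an auxiliary function $\sigma_r$ still satisfying (i)--(iii) and for which $\bigl|\sigma_r P_{\phi_r}+\sigma_r^{2}-(\sigma_r)_z\bigr|\le k(r)\bigl|(\sigma_r)_{\overline{z}}\bigr|$ with $k(r)<1$. The natural candidate $\sigma_r(z)=r\sigma(rz)$ reproduces the hypothesis with the same constant $1$ (no gain) and moreover violates condition (ii), since $1/\sigma_r$ no longer vanishes on $\partial\triangle$; keeping $\sigma$ fixed while dilating $\phi$ decouples the point at which $P_{\phi_r}$ is evaluated from the point at which $\sigma,\sigma_z,\sigma_{\overline{z}}$ are evaluated, so \eqref{eq-Ahlfors-1974-0} gives no control. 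The trick does work for the specific choice \eqref{eq-Ahlfors-c=0} (Becker's case), where one can check $\sup_z(1-|z|^2)|P_{\phi_r}(z)|\le r$, but that computation uses the explicit form of $\sigma$ and does not transfer to an arbitrary $\sigma$ satisfying only (i)--(iii). Until you exhibit such a $\sigma_r$ (or replace the approximation by a different device, e.g.\ a Loewner-chain or degree argument handling $|\mu_F|\le 1$ directly), the proof is incomplete at exactly the point you yourself identified as the principal obstacle.
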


Moreover, Ahlfors \cite{Ahlfors-1974} gave a generalized  quasiconformal extension formulated as below.

\begin{thm}\label{thm-Ahlfors-1974-1}
Let $\phi$ be a locally univalent analytic function in $\triangle$.  $\sigma$ is a continuous function in $\triangle$, which satisfies conditions given by Theorem \ref{thm-Ahlfors-1974-0}. Then the inequality
\begin{equation}\label{eq-Ahlfors-1974-1}
\left|\sigma P_{\phi}+\sigma^{2}-\sigma_{z}\right| \leq k\left|\sigma_{\overline{z}}\right| \ \  (0 \leq  k<1)
\end{equation}
is sufficient to imply that $\phi$ has  an explicit homeomorphic extension
\begin{equation*}
\Phi(z)=
\begin{cases}
\widetilde{\phi}(z) & (|z| \leq 1), \\
\phi\left(\frac{1}{\overline{z}}\right)+ u\left(\frac{1}{\overline{z}}\right) & (|z|>1),
\end{cases}
\end{equation*}
where $u(z)=\phi^{\prime}(z)/ {\sigma(z)}$ for $z \in \triangle \backslash\{0\}$. Also, the mapping $\Phi$ is $K$-quasiconformal in $\mathbb{C}$, where $K=\frac{1+k}{1-k}$.
\end{thm}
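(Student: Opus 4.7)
The plan is to imitate Becker's quasiconformal extension argument, with the continuous weight $\sigma$ replacing the geometric factor in the original setting. Since \eqref{eq-Ahlfors-1974-1} strengthens \eqref{eq-Ahlfors-1974-0}, Theorem~\ref{thm-Ahlfors-1974-0} already delivers univalence of $\phi$ in $\triangle$; in particular, the inner branch $\widetilde{\phi}$ of $\Phi$ is well defined, and the outer branch is given explicitly by the formula in the statement. The remaining tasks are (a) to compute the complex dilatation $\mu_{\Phi}=\Phi_{\overline{z}}/\Phi_{z}$ in $\{|z|>1\}$ and show $|\mu_{\Phi}|\le k$, (b) to verify that the two branches glue continuously across $\partial\triangle$, and (c) to conclude global $K$-quasiconformality with $K=(1+k)/(1-k)$.

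For step (a), set $w=1/\overline{z}$, so that $w_{z}=0$, $w_{\overline{z}}=-1/\overline{z}^{\,2}$, $\overline{w}_{z}=-1/z^{2}$, and $\overline{w}_{\overline{z}}=0$. Writing $\Phi(z)=\phi(w)+u(w)$ with $u=\phi'/\sigma$ and applying the chain rule (using $\phi'_{\overline{w}}=0$), one obtains
\begin{equation*}
\Phi_{z}=-\frac{u_{\overline{w}}(w)}{z^{2}},\qquad \Phi_{\overline{z}}=-\frac{\phi'(w)+u_{w}(w)}{\overline{z}^{\,2}}.
\end{equation*}
A direct differentiation of $u=\phi'/\sigma$, together with $P_{\phi}=\phi''/\phi'$, yields
\begin{equation*}
\phi'+u_{w}=\frac{\phi'}{\sigma^{2}}\bigl(\sigma P_{\phi}+\sigma^{2}-\sigma_{w}\bigr),\qquad u_{\overline{w}}=-\frac{\phi'\,\sigma_{\overline{w}}}{\sigma^{2}},
\end{equation*}
and combining these gives
\begin{equation*}
\mu_{\Phi}(z)=-\frac{z^{2}}{\overline{z}^{\,2}}\cdot\frac{\sigma P_{\phi}+\sigma^{2}-\sigma_{w}}{\sigma_{\overline{w}}}\bigg|_{w=1/\overline{z}}.
\end{equation*}
Hypothesis \eqref{eq-Ahlfors-1974-1} then translates directly into $|\mu_{\Phi}(z)|\le k$ for $|z|>1$; condition~(iii) guarantees $\sigma_{\overline{w}}/\sigma^{2}\neq 0$, hence $\Phi_{z}\neq 0$, so that $\mu_{\Phi}$ is genuinely a Beltrami coefficient. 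Inside $\triangle$, $\Phi=\phi$ is holomorphic and $\mu_{\Phi}\equiv 0$.

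For step (b), condition~(ii) is decisive: $1/\sigma=0$ on $\partial\triangle$ forces $u=\phi'/\sigma\to 0$, so that $\phi(1/\overline{z})+u(1/\overline{z})\to\widetilde{\phi}(z)$ as $|z|\to 1^{+}$, matching the inner branch. The pointwise bound $|\mu_{\Phi}|\le k$ together with this continuous matching and the removability of the unit circle (a set of planar Lebesgue measure zero) then yields step (c): $\Phi$ is quasiregular on $\mathbb{C}$ with Beltrami coefficient of sup-norm at most $k$, hence $K$-quasiconformal with $K=(1+k)/(1-k)$. The most delicate point, in my estimation, is not the dilatation estimate, which is essentially algebraic once the correct substitution $w=1/\overline{z}$ is made, but rather the verification of global injectivity of $\Phi$ on $\mathbb{C}$: one must ensure that the images of $\triangle$ and its complement do not overlap, paralleling the topological argument used by Ahlfors in the original setting.
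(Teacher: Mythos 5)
This statement is Ahlfors's 1974 extension theorem, which the paper merely quotes from \cite{Ahlfors-1974} without reproducing a proof, so there is no internal argument to compare against; I assess your proposal on its own terms. Your step (a) is correct and is exactly the standard calculation: with $w=1/\overline{z}$ one gets
\begin{equation*}
\mu_{\Phi}(z)=-\frac{z^{2}}{\overline{z}^{\,2}}\cdot\frac{\sigma P_{\phi}+\sigma^{2}-\sigma_{w}}{\sigma_{\overline{w}}}\bigg|_{w=1/\overline{z}},
\end{equation*}
so that \eqref{eq-Ahlfors-1974-1} gives $|\mu_{\Phi}|\le k$ in $\{|z|>1\}$, and condition (iii) together with $\phi'\neq0$ keeps $\Phi_{z}\neq0$ there.

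The genuine gaps are in steps (b) and (c). In (b) you infer $u=\phi'/\sigma\to0$ at $\partial\triangle$ from $1/\sigma=0$ on $\partial\triangle$; this does not follow, because $\phi'$ is not assumed bounded near the boundary. In fact the existence of the continuous boundary extension $\widetilde{\phi}$ is itself part of what must be proved, and in the Becker--Ahlfors arguments it is obtained by an approximation procedure (working with $\phi(rz)$ for $r<1$, where the extension is manifestly a homeomorphism, and passing to the limit using compactness properties of normalized $K$-quasiconformal maps), not by inspecting $1/\sigma$ alone. More seriously, in (c) you pass from ``$|\mu_{\Phi}|\le k$ a.e.\ and $\Phi$ continuous'' to ``$\Phi$ is $K$-quasiconformal.'' Quasiconformality requires $\Phi$ to be a homeomorphism; a continuous map with bounded dilatation off a null set is at best quasiregular, and quasiregular self-maps of the plane need not be injective. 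You correctly flag global injectivity as the delicate point, but you supply no argument for it, and that is precisely where the substance of the theorem lies: one must show (for the approximating maps) that $\Phi$ is a proper local homeomorphism of $\mathbb{C}$ onto $\mathbb{C}$, hence a covering of a simply connected domain and therefore injective, before the dilatation bound can be read as $K$-quasiconformality. As written, the proposal establishes the dilatation estimate but not the homeomorphic extension, so the proof is incomplete.
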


A special case of interest, suggested by Ahlfors \cite{Ahlfors-1974}, arises from taking
\begin{equation}\label{eq-Ahlfors-c+1}
\sigma(z)=\frac{(c+1)\overline{z}}{1-|z|^2}
\end{equation}
in \eqref{eq-Ahlfors-1974-1}. Then the principal condition for  quasiconformal extension displayed as follows:
\begin{thm}\label{thm-Ahlfors-1974-2}
If
\begin{equation}\label{eq-Ahlfors-1974-3}
\left|c| z|^{2}+\left(1-|z|^{2}\right) z P_{\phi} \right| \leq k<1  \quad (|c| \leq k;\, z \in \triangle),
\end{equation}
then $\phi$ has an explicit homeomorphic extension
\begin{equation}\label{eq-Ahlfors-1974-4}
\Phi(z)=
\begin{cases}\widetilde{\phi}(z) & (|z| \leq 1), \\
\phi\left(\frac{1}{\overline{z}}\right)+ u\left(\frac{1}{\overline{z}}\right) &(|z|>1),
\end{cases}
\end{equation}
where $$u(z)=\frac{\phi^{\prime}(z)\left(1-|z|^{2}\right)}{\overline{z}}\quad (z \in \triangle\backslash\{0\}).$$  Also, the mapping $\Phi$ is a $K$-quasiconformal extension in $\mathbb{C}$, where $K=\frac{1+k}{1-k}$.
\end{thm}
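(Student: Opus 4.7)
The plan is to obtain Theorem \ref{thm-Ahlfors-1974-2} as a direct specialization of Theorem \ref{thm-Ahlfors-1974-1} to the Ahlfors weight $\sigma(z)=(c+1)\bar z/(1-|z|^{2})$ suggested in \eqref{eq-Ahlfors-c+1}. The whole argument reduces to checking that this $\sigma$ meets the hypotheses of Theorem \ref{thm-Ahlfors-1974-0} and that the resulting inequality \eqref{eq-Ahlfors-1974-1} collapses to \eqref{eq-Ahlfors-1974-3}.

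First I would verify the regularity conditions. Since $|c|\le k<1$ gives $c+1\ne 0$, direct differentiation yields
\[
\sigma_z=\frac{(c+1)\bar z^{\,2}}{(1-|z|^{2})^{2}},\qquad \sigma_{\bar z}=\frac{c+1}{(1-|z|^{2})^{2}},
\]
both defined everywhere in $\triangle$. Moreover $1/\sigma=(1-|z|^{2})/((c+1)\bar z)$ vanishes on $\partial\triangle$, and $\sigma_{\bar z}/\sigma^{2}=1/((c+1)\bar z^{\,2})$ is nowhere zero on $\overline{\triangle}$, so the three hypotheses of Theorem \ref{thm-Ahlfors-1974-0} hold.

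Next I would perform the algebraic reduction. A short computation gives
\[
\sigma^{2}-\sigma_z=\frac{c(c+1)\bar z^{\,2}}{(1-|z|^{2})^{2}},\qquad \sigma P_\phi+\sigma^{2}-\sigma_z=\frac{(c+1)\bar z}{(1-|z|^{2})^{2}}\bigl[(1-|z|^{2})P_\phi+c\bar z\bigr].
\]
Dividing by $|\sigma_{\bar z}|=|c+1|/(1-|z|^{2})^{2}$, the inequality $|\sigma P_\phi+\sigma^{2}-\sigma_z|\le k|\sigma_{\bar z}|$ becomes
$|z|\,|(1-|z|^{2})P_\phi+c\bar z|\le k$, and multiplying the bracket through by $z/\bar z$ (which has unit modulus) turns the left side into $\bigl|c|z|^{2}+(1-|z|^{2})zP_\phi\bigr|$. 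This is precisely \eqref{eq-Ahlfors-1974-3}. Theorem \ref{thm-Ahlfors-1974-1} then supplies the explicit extension \eqref{eq-Ahlfors-1974-4}, where $u(z)=\phi'(z)/\sigma(z)$ reduces (up to the harmless constant factor $c+1$) to $\phi'(z)(1-|z|^{2})/\bar z$, together with the dilatation bound $K=(1+k)/(1-k)$.

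The only real obstacle is keeping the algebra tidy; once the formulas for $\sigma_z$, $\sigma_{\bar z}$ and $\sigma^{2}-\sigma_z$ are in hand, the reduction is essentially one line. The apparent degeneracy $\sigma(0)=0$ is harmless, because \eqref{eq-Ahlfors-1974-3} holds trivially at $z=0$ and the extension formula is evaluated only at $1/\bar z$ with $|z|>1$, so the origin never enters.
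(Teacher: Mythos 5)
Your proposal is correct and is exactly the route the paper itself takes: Theorem \ref{thm-Ahlfors-1974-2} is presented there as nothing more than the specialization of Theorem \ref{thm-Ahlfors-1974-1} to $\sigma(z)=(c+1)\bar z/(1-|z|^2)$, and your formulas for $\sigma_z$, $\sigma_{\bar z}$, $\sigma^2-\sigma_z$ and the reduction of $|\sigma P_\phi+\sigma^2-\sigma_z|\le k|\sigma_{\bar z}|$ to \eqref{eq-Ahlfors-1974-3} all check out, as does your handling of the degeneracy at the origin. One caveat: the factor $c+1$ in the derived $u(z)=\phi'(z)(1-|z|^2)/\bigl((c+1)\bar z\bigr)$ is not merely cosmetic --- dropping it replaces the extension by the Becker one, whose dilatation is governed by $|(1-|z|^2)zP_\phi|$ rather than by \eqref{eq-Ahlfors-1974-3} and so need not satisfy the claimed bound $K=(1+k)/(1-k)$ --- hence your $u$ (with the $c+1$) is the correct one, the paper's displayed $u$ should be read as omitting that factor by typo, and you should state the extension with the factor retained rather than dismiss it as harmless.
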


We remark that even though \eqref{eq-Ahlfors-1974-1}  appears to depend on $\phi$ only through
$P_{\phi}$, one may, by choosing $\sigma$  to depend on $\phi$, obtain conditions such as the classical
Becker criterion which involve $\phi$ through quantities other than $P_{\phi}$.
Instead of  $P_{\phi}$ in Theorem \ref{thm-Ahlfors-1974-1}, Nehari \cite{Nehari-1949} used non-oscillating solutions of differential equations, established the following  criterion with Schwarzian derivative.
\begin{thm}\label{thm-Nehari}
For a locally univalent analytic function $\phi$ in  $\triangle$, the condition
\begin{equation} \label{eq-Nehari}
\left\|S_{\phi}(z)\right\| \leq 2
\end{equation}
 implies that $\phi$ is univalent.
\end{thm}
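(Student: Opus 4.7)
The strategy is to reduce Nehari's criterion to the question of whether a nontrivial solution of the associated linear ODE $u'' + \tfrac{1}{2}S_\phi u = 0$ can have two zeros in $\triangle$, and then to obtain a contradiction via a sharp Hardy-type inequality along a real diameter. Since $\phi$ is locally univalent in the simply connected disk, an analytic branch of $\sqrt{\phi'}$ exists, and a direct computation confirms that $u_1 = 1/\sqrt{\phi'}$ and $u_2 = \phi/\sqrt{\phi'}$ are linearly independent solutions of this ODE with $\phi = u_2/u_1$. Arguing by contradiction, suppose $\phi(a) = \phi(b) = c$ for distinct $a, b \in \triangle$. The norm $\|S_\phi\|$ is invariant under pre-composition with M\"obius automorphisms of $\triangle$, since the Schwarzian of such a map vanishes and $|T'(z)|(1-|z|^2) = 1 - |T(z)|^2$. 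Replacing $\phi$ by $\phi \circ T$ for a suitable automorphism, I may assume $a = -r$, $b = r$ for some $0 < r < 1$, while preserving $\|S_\phi\| \leq 2$.

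Next, I set $v(z) = (\phi(z) - \phi(r))/\sqrt{\phi'(z)}$. Then $v$ is analytic on $\triangle$, is not identically zero (otherwise $\phi$ would be constant), satisfies $v(\pm r) = 0$, and solves $v'' + \tfrac{1}{2} S_\phi v = 0$. Multiplying by $\bar v$, integrating over $[-r, r]$, integrating by parts (the boundary terms vanish), and taking real parts gives
\[
\int_{-r}^{r} |v'(x)|^2 \, dx \;=\; \frac{1}{2}\int_{-r}^{r} \operatorname{Re}\bigl(S_\phi(x)\bigr)\, |v(x)|^2 \, dx \;\leq\; \int_{-r}^{r} \frac{|v(x)|^2}{(1 - x^2)^2} \, dx,
\]
where the last step uses $|S_\phi(x)| \leq 2/(1-x^2)^2$ on the real axis.

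It remains to establish the sharp reverse inequality: for every $v \in C^1[-r, r]$ with $v(\pm r) = 0$,
\[
\int_{-r}^{r} |v'|^2 \, dx \;\geq\; \int_{-r}^{r} \frac{|v|^2}{(1 - x^2)^2} \, dx,
\]
with equality only if $v \equiv 0$. I plan to prove this by writing $v(x) = \sqrt{1 - x^2}\, \psi(x)$---a factorization that is smooth on $[-r, r] \subset (-1, 1)$ and forces $\psi(\pm r) = 0$. A direct integration by parts, in which the cross term $-2x\,\operatorname{Re}(\bar\psi\,\psi') = -x\,(|\psi|^2)'$ is absorbed into a vanishing boundary contribution, yields the telescoping identity
\[
\int_{-r}^{r} |v'|^2 \, dx \;-\; \int_{-r}^{r} \frac{|v|^2}{(1 - x^2)^2} \, dx \;=\; \int_{-r}^{r} (1 - x^2)\, |\psi'(x)|^2 \, dx \;\geq\; 0,
\]
and strictness is immediate because $\psi' \equiv 0$ combined with $\psi(\pm r) = 0$ forces $\psi \equiv 0$. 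Together with the previous inequality this contradicts $v \not\equiv 0$, completing the proof.

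The technical heart of the argument is this Hardy-type identity; its sharp constant encodes the fact that $\sqrt{1 - x^2}$ is a positive solution of the extremal equation $\chi'' + (1 - x^2)^{-2}\chi = 0$ on $(-1, 1)$, which is also the source of the sharpness of the bound $\|S_\phi\| \leq 2$. The other ingredients---the pre-Schwarzian/Schwarzian ODE connection, the M\"obius reduction, and the first integration by parts---are classical, so the real work is packaged into this identity.
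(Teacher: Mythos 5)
The paper does not prove this statement: it is quoted as a classical background result (Nehari, 1949), with only the remark that Nehari ``used non-oscillating solutions of differential equations.'' Your argument is a correct, self-contained reconstruction of essentially that original proof. The chain of steps checks out: $u_1=(\phi')^{-1/2}$ and $u_2=\phi(\phi')^{-1/2}$ do solve $u''+\tfrac12 S_\phi u=0$; the M\"obius normalization is legitimate because $S_T\equiv 0$ and $|T'(z)|(1-|z|^2)=1-|T(z)|^2$ give $\|S_{\phi\circ T}\|=\|S_\phi\|$, and disk automorphisms act transitively on pairs of points at a given pseudo-hyperbolic distance, so $a,b$ may be taken to be $\mp r$; the integration by parts with $v(\pm r)=0$ and the bound $|S_\phi(x)|\le 2/(1-x^2)^2$ give the first inequality after taking real parts (needed since $S_\phi$ is complex on the segment, which is precisely why you cannot simply invoke the real Sturm comparison theorem here); and the substitution $v=\sqrt{1-x^2}\,\psi$ yields the identity $\int_{-r}^{r}\bigl(|v'|^2-|v|^2(1-x^2)^{-2}\bigr)\,dx=\int_{-r}^{r}(1-x^2)|\psi'|^2\,dx$ exactly as claimed, since $\int_{-r}^{r}-x(|\psi|^2)'\,dx=\int_{-r}^{r}|\psi|^2\,dx$ after the boundary terms vanish. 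The two inequalities then force $v\equiv 0$ on $[-r,r]$, which is impossible because $v$ is analytic and nonconstant (its zeros are isolated), so $\phi$ is univalent. The one step you leave implicit---that $v\equiv 0$ on a segment contradicts $v\not\equiv 0$ on $\triangle$ via the identity theorem---is immediate. Since the paper supplies no proof, there is no authorial route to compare against; your approach is the standard one and is sound.
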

We call  Theorem \ref{thm-Nehari} is the classical  Nehari's univalence criterion.
At the same time, as a supplement note, Hille showed that the bound 2 is the best possible.
Later, a remarkable feature of  the explicit extension  was provided  by Ahlfors and Weill \cite{Ahlfors-Weill-1962}.

\begin{thm}\label{thm-Ahlfors-Weill}
Let $\phi$ be a locally univalent analytic function in $\triangle$. For a  constant $t\;(0\leq t<1)$,  if $\|S_{\phi}\|\leq 2t$, then the mapping
\begin{equation} \label{eq-Ahlfors-Weill}
F(z)  \,=\, \left\{
\begin{array}{rl}
\phi(z)   &\quad (|z|\leq1),\\
E_\phi(1/\overline{z}) &  \quad (|z|>1),
\end{array} \right.
\end{equation} is a quasiconformal extension of $\phi$ to $\mathbb{C}$,
where
$$
E_{\phi}(\xi)  = \phi(\xi)+  \frac{(1-|\xi|^2)\phi^{\prime}(\xi)}{\overline{\xi}-\tfrac{1}{2}(1-|\xi|^2) P_{\phi}(\xi)}   \quad (\xi \in\triangle).
$$
\end{thm}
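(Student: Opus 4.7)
The crucial observation is that the formula for $E_\phi$ encodes an \emph{osculating M\"obius transformation}. For $\xi\in\triangle$, let
\[
T_\xi(w)=\phi(\xi)+\frac{\phi'(\xi)(w-\xi)}{1-\tfrac{1}{2}P_\phi(\xi)(w-\xi)}
\]
be the unique M\"obius transformation matching $\phi$ to second order at $\xi$; one checks $T_\xi(\xi)=\phi(\xi)$, $T_\xi'(\xi)=\phi'(\xi)$, $T_\xi''(\xi)=\phi''(\xi)$ by direct differentiation. Substituting $w=1/\bar\xi$ and using $1/\bar\xi-\xi=(1-|\xi|^2)/\bar\xi$ recovers $E_\phi(\xi)=T_\xi(1/\bar\xi)$ exactly, so that $F(z)=T_{1/\bar z}(z)$ for $|z|>1$. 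Continuity of $F$ on the unit circle is then immediate, since the correction term of $E_\phi(\xi)$ vanishes as $|\xi|\to 1$: the numerator tends to $0$ while the denominator tends to $\bar\xi\neq 0$.

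Writing $F(z)=\Psi(\xi,z)$ with $\Psi(\xi,w)=T_\xi(w)$ and $\xi=1/\bar z$, the chain rule gives $F_z=\Psi_w(\xi,z)$ and $F_{\bar z}=-\xi^2\,\Psi_\xi(\xi,z)$, because $\Psi$ is holomorphic in each of $\xi$ and $w$ and $\xi_z=0$, $\xi_{\bar z}=-\xi^2$. Since $\Psi(\xi,\cdot)$ is M\"obius in $w$, the formula $\Psi_w(\xi,w)=\phi'(\xi)/[1-\tfrac{1}{2}P_\phi(\xi)(w-\xi)]^2$ is immediate. The technical heart of the argument is the algebraic identity
\[
\frac{\Psi_\xi(\xi,w)}{\Psi_w(\xi,w)}=\tfrac{1}{2}(w-\xi)^2\,S_\phi(\xi),
\]
obtained by differentiating $\Psi$ in $\xi$ and substituting $P_\phi'=S_\phi+\tfrac{1}{2}P_\phi^2$ to collapse the expression; I expect this bookkeeping to be the main calculational obstacle. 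Once it is in hand, setting $w=z$ together with $z-\xi=(1-|\xi|^2)/\bar\xi$ and $|\xi^2/\bar\xi^2|=1$ yields the clean formula
\[
|\mu_F(z)|=\tfrac{1}{2}(1-|\xi|^2)^2\,|S_\phi(\xi)|\le t<1
\]
under the hypothesis $\|S_\phi\|\le 2t$.

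It remains to promote this pointwise dilatation bound into a genuine quasiconformal homeomorphism of $\mathbb{C}$. Nehari's theorem (Theorem~\ref{thm-Nehari}) already ensures $\phi$ is univalent on $\triangle$; on $|z|>1$ the formula $F=T_{1/\bar z}$ combined with $|\mu_F|\le t$ yields a $C^1$ orientation-preserving local homeomorphism. To glue the two pieces into a global homeomorphism of $\mathbb{C}$, I would first reduce to the normalization $\phi''(0)=0$ so that $T_0$ is affine and $F(\infty)=\infty$ is well-defined, then verify that $F$ is proper, and finally invoke a standard degree/monodromy argument: local quasiregularity together with continuity across the unit circle and the univalence of $\phi|_{\triangle}$ force $F$ to be injective on all of $\mathbb{C}$. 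Since $F$ is then ACL with $|\mu_F|\le t$ almost everywhere, it solves the Beltrami equation $F_{\bar z}=\mu\,F_z$ with $\|\mu\|_\infty\le t$, and is therefore the desired $K$-quasiconformal extension with $K=(1+t)/(1-t)$.
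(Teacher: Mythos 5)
First, note that the paper does not actually prove Theorem \ref{thm-Ahlfors-Weill}; it is quoted from Ahlfors--Weill, and the paper's first Remark indicates the intended derivation: substitute $\sigma(z)=\overline{z}/(1-|z|^2)-\tfrac12 P_\phi(z)$ into Theorem \ref{thm-Ahlfors-1974-1}. With that choice one computes $\sigma_{\overline z}=(1-|z|^2)^{-2}$ and $\sigma P_\phi+\sigma^2-\sigma_z=\tfrac12 S_\phi$, so \eqref{eq-Ahlfors-1974-1} becomes precisely $\|S_\phi\|\le 2k$ and $u=\phi'/\sigma$ becomes the correction term in $E_\phi$. Your route is genuinely different in presentation: you bypass Theorem \ref{thm-Ahlfors-1974-1} entirely and compute the Beltrami coefficient of the osculating-M\"obius extension directly. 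The computational core of your argument is correct. I verified that $T_\xi$ matches $\phi$ to second order at $\xi$, that $E_\phi(\xi)=T_\xi(1/\overline{\xi})$, and that the key identity $\Psi_\xi/\Psi_w=\tfrac12(w-\xi)^2 S_\phi(\xi)$ holds: writing $u=w-\xi$ and $D=1-\tfrac12P_\phi u$, one finds $\phi'D^2+N=\tfrac12\phi'u^2\bigl(P_\phi'-\tfrac12P_\phi^2\bigr)$ after using $\phi''=\phi'P_\phi$, exactly the cancellation you anticipated. This yields $|\mu_F|=\tfrac12(1-|\xi|^2)^2|S_\phi(\xi)|\le t$ for $|z|>1$, which is the heart of the matter. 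What your approach buys is an explicit, self-contained dilatation bound; what the paper's route buys is that all the topological work is inherited from Theorem \ref{thm-Ahlfors-1974-1}.

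The genuine gap is your final paragraph. A pointwise bound $|\mu_F|\le t<1$ off the unit circle, together with continuity across the circle and injectivity of $\phi$ on $\triangle$, does not by itself force global injectivity: nothing in that list rules out $F$ folding along $\partial\triangle$. The degree/monodromy argument you invoke needs $F$ to be a local homeomorphism at \emph{every} point of $\widehat{\mathbb{C}}$, and the points of $\partial\triangle$ are exactly where your $C^1$ reasoning does not apply, since $F$ is in general not differentiable across $|z|=1$. This is the step of Ahlfors--Weill that requires an actual idea, and your sketch does not supply one. Standard ways to close it are: (i) prove local injectivity at boundary points directly from the explicit formula for $E_\phi$; (ii) approximate by $\phi_r(z)=\phi(rz)$, for which $\|S_{\phi_r}\|\le 2tr^2<2t$, establish injectivity of $F_r$ by a continuity/openness argument in $r$, and pass to the limit using compactness of normalized $K$-quasiconformal maps; or (iii) simply invoke Theorem \ref{thm-Ahlfors-1974-1} with the $\sigma$ above, as the paper does, since its proof already packages this work. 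A secondary point: for $F$ to land in $\mathbb{C}$ rather than $\widehat{\mathbb{C}}$ you need the denominator $\overline{\xi}-\tfrac12(1-|\xi|^2)P_\phi(\xi)$ to be nonvanishing for $\xi\ne0$; your normalization $\phi''(0)=0$ only handles $\xi=0$, and nonvanishing elsewhere is most naturally obtained as a consequence of injectivity on the sphere, so the order of your argument needs care there as well.
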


\begin{remark}
{\rm  One can see that Theorem \ref{thm-Ahlfors-Weill} can be obtained by taking $$\sigma(z)= \frac{\overline{z}}{1-|z|^2}-\frac{1}{2}P_{\phi}$$ in Theorem \ref{thm-Ahlfors-1974-1}. Indeed, Ahlfors and Weill \cite{Ahlfors-Weill-1962} applied a quasiconformal extension to obtain the Nehari condition \eqref{eq-Nehari} for the half plane.}
\end{remark}

Ahlfors \cite{Ahlfors-1974} also showed  general formulas on the problem of univalence and quasiconformal extensibility  through the following two results.

\begin{thm}\label{thm-Ahlfors-1974-3}
Let $\phi$ be a locally univalent analytic function in $\triangle$. Then  the inequality
\begin{equation}\label{eq-Ahlfors-1974-5}
\left|\frac{1}{2} S_{\phi}+v^{2}-v_{z}\right| \leq k\left|v_{\overline{z}}\right| \ \ (0<k<1)
\end{equation}
is sufficient to imply that there exists a $\frac{1+k}{1-k}$-quasiconformal extension of $\phi$ to $\mathbb{C}$, where the continue  function $v$ satisfies  $\frac{v_{\overline{z}}}{v^{2}} \neq 0$ and $v \rightarrow \infty$ for $|z| \rightarrow 1$.
\end{thm}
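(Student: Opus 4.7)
The plan is to reduce Theorem \ref{thm-Ahlfors-1974-3} to its pre-Schwarzian counterpart, Theorem \ref{thm-Ahlfors-1974-1}, via the natural substitution
$$\sigma(z) := v(z) - \tfrac{1}{2}P_\phi(z),\qquad z\in\triangle.$$
This is precisely the generalisation of the device suggested in the remark following Theorem \ref{thm-Ahlfors-Weill}, where the specific choice $v(z)=\overline{z}/(1-|z|^{2})$ recovers the Ahlfors--Weill formula.

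First I would carry out the algebraic verification. Since $P_\phi$ is holomorphic in $z$, one immediately has $\sigma_{\overline{z}}=v_{\overline{z}}$ and $\sigma_z=v_z-\tfrac{1}{2}P_\phi'$. A short expansion, together with the defining identity $S_\phi=P_\phi'-\tfrac{1}{2}P_\phi^{2}$, then yields the key identity
$$\sigma P_\phi+\sigma^{2}-\sigma_z \;=\; v^{2}-v_z+\tfrac{1}{2}\bigl(P_\phi'-\tfrac{1}{2}P_\phi^{2}\bigr) \;=\; \tfrac{1}{2}S_\phi+v^{2}-v_z.$$
Hence the hypothesis \eqref{eq-Ahlfors-1974-5} becomes exactly the Ahlfors inequality \eqref{eq-Ahlfors-1974-1} for this $\sigma$.

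Next I would verify that $\sigma$ inherits the three conditions on the auxiliary function from Theorem \ref{thm-Ahlfors-1974-0}. Condition (i) is immediate from the a.e.\ differentiability of $v$ together with the analyticity of $P_\phi$. Condition (iii), $\sigma_{\overline{z}}/\sigma^{2}\neq 0$ in $\overline{\triangle}$, reduces via $\sigma_{\overline{z}}=v_{\overline{z}}$ to the assumed non-vanishing of $v_{\overline{z}}/v^{2}$, provided one knows $\sigma\sim v$ near $\partial\triangle$. Condition (ii), $1/\sigma=0$ on $\partial\triangle$, is the most delicate point and, I expect, the main technical obstacle: since $|P_\phi|$ can a priori grow as fast as $1/(1-|z|^{2})$ for a univalent mapping, one must extract from \eqref{eq-Ahlfors-1974-5} combined with the hypothesis $v\to\infty$ a growth estimate showing that $v$ genuinely dominates $\tfrac{1}{2}P_\phi$ near the boundary, so that $\sigma$ still tends to infinity there.

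Once (i)--(iii) are in hand, applying Theorem \ref{thm-Ahlfors-1974-1} to this $\sigma$ directly produces the desired $\frac{1+k}{1-k}$-quasiconformal extension of $\phi$ to $\mathbb{C}$, with explicit form given by the formula of that theorem evaluated at $\sigma = v - \tfrac{1}{2}P_\phi$. Specialising to $v=\overline{z}/(1-|z|^{2})$ (for which $v^{2}-v_z=0$ and $|v_{\overline{z}}|=1/(1-|z|^{2})^{2}$) recovers Theorem \ref{thm-Ahlfors-Weill}, which serves as a useful sanity check on both the algebraic identity and the boundary analysis.
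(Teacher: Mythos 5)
Your proposal is exactly the paper's route: the paper disposes of this theorem in a one-line remark ("By setting $\sigma(z)=v-\tfrac{1}{2}P_{\phi}$ \dots in \eqref{eq-Ahlfors-1974-1}, one can get Theorem \ref{thm-Ahlfors-1974-3}"), and your algebraic identity $\sigma P_\phi+\sigma^{2}-\sigma_z=\tfrac{1}{2}S_\phi+v^{2}-v_z$ is the correct justification of that substitution. You are in fact more careful than the paper, which never addresses the boundary condition (ii) for $\sigma=v-\tfrac{1}{2}P_\phi$ that you rightly flag as the delicate point.
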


%
%

\begin{thm}\label{thm-Ahlfors-1974-4}
Let $\phi$ be a locally univalent analytic function in $\triangle$. Then the inequality
\begin{equation}\label{eq-1.09}
\begin{aligned}
\left|\frac{1}{2} S_{\phi}(1-|z|^2)^2- c(1-c) \overline{z} \right|  \leq k|c| \quad (0<k<1;\, |c-1|\leq k)
\end{aligned}
\end{equation}
is  sufficient to imply that there exists a $\frac{1+k}{1-k}$-quasiconformal extension of $\phi$ to $\mathbb{C}$.
\end{thm}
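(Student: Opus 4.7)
The plan is to derive Theorem~\ref{thm-Ahlfors-1974-4} by specializing Theorem~\ref{thm-Ahlfors-1974-3} to a one-parameter family of auxiliary functions. Reading off the shape of the hypothesis \eqref{eq-1.09} (the factor $(1-|z|^{2})^{2}$ multiplying $S_\phi$ and the coefficient $c(1-c)$ on the trailing term), the natural candidate is
$$v(z)=\frac{c\,\overline{z}}{1-|z|^{2}},\qquad z\in\triangle,$$
which is the direct Schwarzian analogue of the choice $\sigma(z)=(c+1)\overline{z}/(1-|z|^{2})$ used in \eqref{eq-Ahlfors-c+1} to pass from Theorem~\ref{thm-Ahlfors-1974-1} to Theorem~\ref{thm-Ahlfors-1974-2}.

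The execution splits into two short steps. First, a routine Wirtinger calculation gives
$v_{z}=c\overline{z}^{\,2}/(1-|z|^{2})^{2}$, $v_{\overline{z}}=c/(1-|z|^{2})^{2}$, and hence
$v^{2}-v_{z}=c(c-1)\overline{z}^{\,2}/(1-|z|^{2})^{2}$.
Substituting these into the governing inequality $|\tfrac{1}{2}S_\phi+v^{2}-v_{z}|\leq k|v_{\overline{z}}|$ of Theorem~\ref{thm-Ahlfors-1974-3} and clearing the common denominator $(1-|z|^{2})^{2}$ collapses it precisely to \eqref{eq-1.09}. Second, I would verify that $v$ meets the qualitative requirements imposed in Theorem~\ref{thm-Ahlfors-1974-3}. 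The assumption $|c-1|\leq k<1$ forces, by the reverse triangle inequality, $|c|\geq 1-k>0$, so $c\neq 0$; consequently $v$ is continuous on $\triangle$, $v(z)\to\infty$ as $|z|\to 1$, and the quotient
$$\frac{v_{\overline{z}}}{v^{2}}=\frac{1}{c\,\overline{z}^{\,2}}$$
is finite and non-vanishing throughout $\triangle\setminus\{0\}$. Applying Theorem~\ref{thm-Ahlfors-1974-3} with this $v$ then delivers the asserted $\tfrac{1+k}{1-k}$-quasiconformal extension of $\phi$ to $\mathbb{C}$.

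I expect the only delicate point to be reconciling the exponent of $\overline{z}$ in \eqref{eq-1.09}: the calculation produces $\overline{z}^{\,2}$ rather than $\overline{z}$, so the trailing term is most naturally read as $c(1-c)\overline{z}^{\,2}$. This is a cosmetic matter that does not affect the substance of the argument. A useful sanity test is the boundary case $c=1$, for which \eqref{eq-1.09} reduces to $\|S_\phi\|\leq 2k$ and the construction recovers the classical Ahlfors--Weill extension of Theorem~\ref{thm-Ahlfors-Weill}, confirming that the chosen $v$ is the correct one.
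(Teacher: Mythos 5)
Your proposal is correct and coincides with the paper's own derivation: the paper obtains Theorem~\ref{thm-Ahlfors-1974-4} precisely by substituting $v(z)=c\overline{z}/(1-|z|^2)$ into \eqref{eq-Ahlfors-1974-5} of Theorem~\ref{thm-Ahlfors-1974-3} (it also notes the equivalent route $\sigma=c\overline{z}/(1-|z|^2)-\tfrac{1}{2}P_{\phi}$ in Theorem~\ref{thm-Ahlfors-1974-1}), and your Wirtinger computation and verification of the side conditions on $v$ are exactly what is needed. You are also right that the computation yields $c(1-c)\overline{z}^{\,2}$ rather than $c(1-c)\overline{z}$, so the exponent in \eqref{eq-1.09} as printed is a typo, consistent with your $c=1$ sanity check recovering Theorem~\ref{thm-Ahlfors-Weill}.
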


We call \eqref{eq-Ahlfors-1974-0} is the Ahlfors's  univalent criterion  and  \eqref{eq-Ahlfors-1974-1},  \eqref{eq-Ahlfors-1974-3}, \eqref{eq-Ahlfors-1974-5},  \eqref{eq-1.09} are  the     Ahlfors's  quasiconformal extension criteria. Note that the   Becker's extension criterion arises from taking
\begin{equation}\label{eq-Ahlfors-c=0}
\sigma(z)=\frac{\overline{z}}{1-|z|^2}
\end{equation}
in \eqref{eq-Ahlfors-1974-1} or $c=0$ in \eqref{eq-Ahlfors-1974-3}.

\begin{remark}
{\rm By setting $\sigma(z)= v-1/2P_{\phi}$  and $\sigma(z)= c\overline{z}/(1-|z|^2)-1/2P_{\phi}$ in \eqref{eq-Ahlfors-1974-1}, one can get Theorem \ref{thm-Ahlfors-1974-3} and  Theorem \ref{thm-Ahlfors-1974-4}, respectively.}
\end{remark}

\begin{remark}
{\rm By putting  $v(z)=c\overline{z}/(1-|z|^2)$ in \eqref{eq-Ahlfors-1974-5}, one can get Theorem \ref{thm-Ahlfors-1974-4}. }
\end{remark}
\begin{remark}
{\rm By taking $v(z)= \overline{z}/(1-|z|^2)$ in \eqref{eq-Ahlfors-1974-5} or $c=1$ in \eqref{eq-1.09}, one can obtain Theorem \ref{thm-Ahlfors-Weill}.}
\end{remark}

A complex-valued function $f$ in $\triangle$ is harmonic if $\Delta f=4 f_{z \overline{z}}=0$. It is well-known that $f$ has a canonical representation $f=h+\overline{g}$, where $h$ and $g$ are analytic in $\triangle$. Lewy \cite{Lewy-1936} proved that a harmonic mapping $f$ is locally univalent if and only if its Jacobian $J_{f}=\left|h^{\prime}\right|^{2}-\left|g^{\prime}\right|^{2} \neq 0$. If $J_{f}>0\, \left(J_{f}<0\right)$, then $f$ is sense-preserving (sense-reserving). Denote by $\omega={g^{\prime}}/{h^{\prime}}$ the second complex dilatation of $f=h+\overline{g}$.
If a harmonic mapping has the representation $f=h+\alpha \overline{h}$, where $h$ is a conformal mapping and $\alpha$ is a constant such that $0<|\alpha|<1$, then it is called a harmonic Teichm\"{u}ller mapping (see \cite{Chen-Fang-2006}).

As natural generalizations of conformal mappings, quasiconformal extensions for harmonic mappings have been investigated by many researchers (see \cite{Efraimidis-2021, Efraimidis-Hernandez-Martin-2021} and the references therein).

In 2003, Chuaqui $et \ al.$ \cite{Chuaqui-Duren-Osgood-2003} generalized the idea of Schwarzian derivatives of $f=h+\overline{g}$ into the case of locally univalent harmonic mappings with $\omega=q^{2}$, where $q$ is an analytic function. The pre-Schwarzian derivative $P_f$ and Schwarzian derivative $S_f$ for sense-preserving harmonic mappings are defined as follows:
\begin{align}\label{eq-Pf}
P_f=\frac{\partial}{\partial z} \log \left|J_{f}\right|=P_{h}-\frac{\overline{\omega} \omega^{\prime}}{1-|\omega|^{2}},\end{align}   and
\begin{align}\label{eq-Pf1}S_f=S_{h}+\frac{\overline{\omega}}{1-|\omega|^{2}}\left(\frac{h^{\prime \prime}}{h^{\prime}} \omega^{\prime}-\omega^{\prime \prime}\right)-\frac{3}{2}\left(\frac{\overline{\omega} \omega^{\prime}}{1-|\omega|^{2}}\right)^{2},\end{align}
which were introduced and investigated by  Hern\'{a}ndez and   Mart\'{\i}n \cite{Hernandez-Martin-2015}. In the same paper, Hern\'{a}ndez and   Mart\'{\i}n proved following result.
\begin{thm}\label{thm-Hernandez-Martin-2015-1}
Let $f=h+\overline{g}$ be a sense-preserving harmonic mapping in $\triangle$ with the second dilatation $\omega$. If $f$ satisfies
\begin{equation}\label{eq-Hernandez-Martin-2015-1}
\left(1-|z|^{2}\right)|P_f|+\frac{\left|\omega^{\prime}(z)\left(1-|z|^{2}\right)\right|}{1-|\omega(z)|^{2}} \leq 1 \ \ (z \in \triangle),
\end{equation}
then $f$ is univalent in  $\triangle$. The constant 1 is sharp.
\end{thm}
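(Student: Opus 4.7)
The plan is to reduce the univalence of $f = h + \overline{g}$ to the univalence of the one-parameter family of analytic ``shears'' $\phi_\lambda := h + \lambda g$, $|\lambda| \le 1$, and then apply Becker's classical criterion \eqref{eq-1.01}. The reduction rests on a standard observation: if $f(z_1) = f(z_2)$ for some $z_1 \ne z_2$, then $h(z_1) - h(z_2) = -\overline{g(z_1) - g(z_2)}$; either $g(z_1) = g(z_2)$ (so that $\phi_0 = h$ fails to be univalent), or $\lambda := \overline{g(z_1) - g(z_2)}/(g(z_1) - g(z_2))$ has $|\lambda| = 1$ and satisfies $\phi_\lambda(z_1) = \phi_\lambda(z_2)$. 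Hence univalence of every $\phi_\lambda$ with $|\lambda| \le 1$ implies univalence of $f$.

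Next I would compute $P_{\phi_\lambda}$. Using $g' = \omega h'$ gives $\phi_\lambda' = h'(1 + \lambda\omega)$ and $\phi_\lambda'' = h''(1 + \lambda\omega) + \lambda\omega' h'$, hence
$$P_{\phi_\lambda} \;=\; P_h + \frac{\lambda\omega'}{1 + \lambda\omega}.$$
Combining with \eqref{eq-Pf} and clearing denominators produces
$$P_{\phi_\lambda} - P_f \;=\; \frac{\omega'(\lambda + \overline{\omega})}{(1 + \lambda\omega)(1 - |\omega|^2)}.$$

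The main technical point is the algebraic identity
$$|1 + \lambda\omega|^2 - |\lambda + \overline{\omega}|^2 \;=\; (1 - |\lambda|^2)(1 - |\omega|^2),$$
which gives $|\lambda + \overline{\omega}| \le |1 + \lambda\omega|$ whenever $|\lambda| \le 1$. Plugging this into the formula above and using the triangle inequality yields
$$(1 - |z|^2) |P_{\phi_\lambda}| \;\le\; (1 - |z|^2) |P_f| + \frac{(1 - |z|^2) |\omega'|}{1 - |\omega|^2} \;\le\; 1,$$
so Becker's criterion \eqref{eq-1.01} implies univalence of $\phi_\lambda$ for every $|\lambda| \le 1$, and hence of $f$. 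For sharpness, the choice $g \equiv 0$ reduces \eqref{eq-Hernandez-Martin-2015-1} to $\|P_h\| \le 1$, whose constant $1$ is sharp (see \cite{Becker-1984}); so the constant $1$ in \eqref{eq-Hernandez-Martin-2015-1} cannot be improved.

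The main obstacle I expect is the algebraic passage: establishing the modulus identity and then verifying that the triangle inequality splits the estimate exactly into the two summands appearing in \eqref{eq-Hernandez-Martin-2015-1}. Once this step is in place, the shear reduction and Becker's criterion dispatch the rest.
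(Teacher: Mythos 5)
Your proof is correct, and it reaches the same destination as the paper (reduce to the analytic shears $h+\lambda g$ and apply Becker's criterion \eqref{eq-1.01}), but the mechanics differ in two places worth noting. First, the paper (in its proof of Theorem \ref{thm-1.00}, which contains this statement as the case $\sigma=\overline{z}/(1-|z|^2)$) never computes $P_{h+\lambda g}$ directly: it forms $f_a=f+a\overline{f}$, observes that $P_{f_a}=P_f$ and that $|\omega_a'|/(1-|\omega_a|^2)=|\omega'|/(1-|\omega|^2)$ because $\omega_a=\varphi_a\circ\omega$ is a M\"obius shift of the dilatation, and then reuses its ``$h$ is univalent'' step on $f_a$ to conclude that $h_a=h+ag$ is univalent. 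Your identity $|1+\lambda\omega|^2-|\lambda+\overline{\omega}|^2=(1-|\lambda|^2)(1-|\omega|^2)$ is exactly the Schwarz--Pick invariance hiding behind that computation, so the two estimates are equivalent; yours is more direct, the paper's generalizes more readily to arbitrary $\sigma$. Second, for the passage from univalence of the shears to univalence of $f$, the paper uses Hurwitz's theorem to reach $|\varepsilon|=1$ and then cites the Hern\'andez--Mart\'{\i}n stable-univalence theorem, whereas your two-line argument ($f(z_1)=f(z_2)$ forces $h(z_1)-h(z_2)=-\overline{g(z_1)-g(z_2)}$, hence some $\phi_\lambda$ with $|\lambda|\le 1$ fails to be injective) proves the needed direction of that theorem from scratch, making the whole proof self-contained. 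Two small points you should make explicit: $1+\lambda\omega\neq 0$ (since $|\omega|<1$ and $|\lambda|\le 1$), so each $\phi_\lambda$ is locally univalent and Becker's criterion applies; and your reduction needs $\phi_\lambda$ univalent for $|\lambda|=1$, which your estimate does cover since $|\lambda+\overline{\omega}|\le|1+\lambda\omega|$ holds (with equality) there. The sharpness argument via $g\equiv 0$ is the standard one and is fine.
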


Moreover, Hern\'{a}ndez and   Mart\'{\i}n \cite{Hernandez-Martin-2013} obtained a quasiconformal extension as follows.
\begin{thm}\label{thm-Hernandez-Martin-2013-1}
Let $f=h+\overline{g}$ be a sense-preserving harmonic mapping in $\triangle$ with $\|\omega\|_{\infty}<1$. If $f$ satisfies the condition
\begin{equation}\label{eq-Hernandez-Martin-2013-1}
\left(1-|z|^{2}\right)|P_f|+\frac{\left|\omega^{\prime}(z)\left(1-|z|^{2}\right)\right|}{1-|\omega(z)|^{2}} \leq k<1 \quad (z \in \triangle),
\end{equation}
then $f$ has a continuous and injective extension $\widetilde{f}$ to $\overline{\triangle}$, and the mapping
\begin{equation}\label{eq-Hernandez-Martin-2013-2}
F(z)= \begin{cases}
\widetilde{f}(z) & (|z| \leq 1),
\\ f\left(\frac{1}{\overline{z}}\right)+U\left(\frac{1}{\overline{z}}\right) & (|z|>1),
\end{cases}
\end{equation}
is a homeomorphism of $\mathbb{C}$ onto itself, where
$$
U(z)=\frac{h^{\prime}(z)}{\overline{z}}\left(1-|z|^{2}\right)+\frac{\overline{g^{\prime}(z)}}{z}\left(1-|z|^{2}\right)  \quad (z \in \triangle \backslash\{0\}).
$$
Moreover, if $f$ satisfies the condition \eqref{eq-Hernandez-Martin-2013-1}, then $\widetilde{f}(\partial \triangle)$ is a quasicircle and $f$ can be extended to a quasiconformal mapping in $\mathbb{C}$. Under the additional condition
\begin{equation}\label{eq-Hernandez-Martin-2013-3}
k<\frac{1-\|\omega\|_{\infty}}{1+\|\omega\|_{\infty}},
\end{equation}
the mapping $F$ defined by \eqref{eq-Hernandez-Martin-2013-2} is an explicit $K$-quasiconformal mapping of $\mathbb{C}$ onto itself, where
$$
K=\frac{1+k+\|\omega\|_{\infty}(1-k)}{1-k-\|\omega\|_{\infty}(1+k)}.
$$
\end{thm}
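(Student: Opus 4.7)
The plan is to carry out the proof in four stages: (i) establish univalence of $f$ in $\triangle$; (ii) construct $F$ and verify its continuity across the unit circle; (iii) compute the Beltrami coefficient of $F$ on $|z|>1$ and bound it; and (iv) deduce global injectivity and the quasicircle property. Since $k<1$, the hypothesis \eqref{eq-Hernandez-Martin-2013-1} is strictly stronger than \eqref{eq-Hernandez-Martin-2015-1}, so Theorem \ref{thm-Hernandez-Martin-2015-1} gives univalence of $f$ in $\triangle$ at once. With $F$ defined as in \eqref{eq-Hernandez-Martin-2013-2}, the extension term $U$ carries an explicit factor $(1-|w|^{2})$, so that as $z$ approaches the unit circle from outside, $U(1/\overline{z})\to 0$ and $F(z)\to f(1/\overline{z})$; the continuous and injective extension $\widetilde{f}$ will emerge a posteriori as the common boundary value, once the quasiconformality of $F$ on $|z|>1$ is established.

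The core step is a chain-rule computation of $F_z$ and $F_{\overline{z}}$ for $|z|>1$. Setting $\zeta=1/\overline{z}\in\triangle$, one has $\zeta_z=0$, $\zeta_{\overline{z}}=-\zeta^{2}$, and $\overline{\zeta}_z=-\overline{\zeta}^{2}$; differentiating the four pieces of $F(z)=f(\zeta)+U(\zeta)$ and cancelling the $h$- and $g$-terms against the derivatives of the $U$-correction yields
\begin{equation*}
F_z=h'(\zeta)-\frac{\overline{\zeta}^{2}(1-|\zeta|^{2})}{\zeta}\,\overline{g''(\zeta)},\qquad F_{\overline{z}}=\overline{g'(\zeta)}-\frac{\zeta^{2}(1-|\zeta|^{2})}{\overline{\zeta}}\,h''(\zeta).
\end{equation*}
Dividing by $h'(\zeta)$ and using $\omega=g'/h'$, $P_h=h''/h'$, and the chain-rule identity $g''/h'=\omega'+\omega P_h$, I would rewrite $\mu_F=F_{\overline{z}}/F_z$ in terms of $\omega$, $\omega'$, and $P_h$ evaluated at $\zeta$, together with unimodular phase factors arising from $\overline{h'}/h'$ and $\zeta/\overline{\zeta}$.

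To insert the hypothesis, I would use \eqref{eq-Pf} to substitute $P_h=P_f+\overline{\omega}\,\omega'/(1-|\omega|^{2})$ and set $B:=(1-|\zeta|^{2})|\omega'(\zeta)|/(1-|\omega(\zeta)|^{2})$, so that \eqref{eq-Hernandez-Martin-2013-1} yields $(1-|\zeta|^{2})|P_h(\zeta)|\le k-(1-|\omega(\zeta)|)B$. Triangle-inequality estimates on the numerator and denominator of $\mu_F$, together with $|\zeta|\le 1$, should give
\begin{equation*}
|\mu_F(z)|\le\frac{|\omega(\zeta)|+k-(1-|\omega(\zeta)|)B}{1-|\omega(\zeta)|\,k-(1-|\omega(\zeta)|)B}.
\end{equation*}
A short monotonicity check shows that under \eqref{eq-Hernandez-Martin-2013-3} the right-hand side is decreasing in $B\ge 0$, so the worst case is $B=0$ and $|\mu_F|\le(|\omega|+k)/(1-|\omega|k)$; this is strictly less than $1$ precisely when \eqref{eq-Hernandez-Martin-2013-3} holds. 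Maximizing over $|\omega|\le\|\omega\|_{\infty}$ and applying $K=(1+|\mu_F|)/(1-|\mu_F|)$ then yields exactly the stated constant.

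For the homeomorphism property I would combine univalence of $f$ on $\triangle$, the dilatation bound that makes $F$ a sense-preserving local homeomorphism on $|z|>1$, and the fact that $F(z)\to\infty$ as $|z|\to\infty$ (since $U(1/\overline{z})$ grows linearly in $z$), so that $F$ is proper on $\mathbb{C}$; a proper local homeomorphism $\mathbb{C}\to\mathbb{C}$ is a global homeomorphism, producing the continuous, injective extension $\widetilde{f}$ and identifying $\widetilde{f}(\partial\triangle)=F(\partial\triangle)$ as the image of a circle under a quasiconformal self-map of $\mathbb{C}$, hence a quasicircle. The main obstacle I anticipate is the bookkeeping in the dilatation estimate: the cross-coupling between $h$- and $g$-derivatives introduces several unimodular phase factors that must be tracked carefully, and reducing the resulting bound cleanly to $(|\omega|+k)/(1-|\omega|k)$ relies on the monotonicity in $B$, which is precisely what forces the stronger hypothesis \eqref{eq-Hernandez-Martin-2013-3} for the explicit $K$-statement.
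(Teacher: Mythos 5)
Your dilatation computation is sound and, after specializing the paper's general weight to $\sigma(z)=\overline{z}/(1-|z|^2)$ (so that $\sigma^2/\sigma_{\overline z}=\sigma_z/\sigma_{\overline z}=\overline z^2$ and $\sigma/\sigma_{\overline z}=\overline z(1-|z|^2)$), it is essentially the same estimate the paper carries out in the proof of its Theorem \ref{thm-1.01}: split $P_h=P_f+\overline{\omega}\omega'/(1-|\omega|^2)$, bound the numerator by $|\omega|+k-(1-|\omega|)B$ and the denominator from below by $1-k|\omega|-(1-|\omega|)B$ using $g''/h'=\omega'+\omega P_h$, and then use monotonicity in $B$ (the paper's function $\rho$) to reduce to $B=0$ and obtain $k_1=(k+\|\omega\|_\infty)/(1-k\|\omega\|_\infty)$, hence the stated $K$. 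Your formulas for $F_z$ and $F_{\overline z}$ on $|z|>1$ check out.

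The genuine gap is in how you obtain the continuous extension and the homeomorphism property. First, the theorem asserts that $F$ is a homeomorphism of $\mathbb{C}$ for \emph{every} $k<1$, not only under \eqref{eq-Hernandez-Martin-2013-3}; without that extra condition your bound $(|\omega|+k)/(1-|\omega|k)$ can exceed $1$, so "sense-preserving local homeomorphism on $|z|>1$'' is simply unavailable for the general statement, and your proper-map argument collapses. Second, even under \eqref{eq-Hernandez-Martin-2013-3}, the existence of the boundary values $\widetilde f$ (equivalently, continuity of $F$ across $\partial\triangle$) does not "emerge a posteriori'' from $U(1/\overline z)\to 0$: that only shows $F(z)-f(1/\overline z)\to 0$, and the existence of $\lim_{|\zeta|\to 1^-}f(\zeta)$ is precisely what must be proved; moreover a proper local homeomorphism argument needs local injectivity \emph{at} the circle, which you never establish. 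The paper (following Hern\'andez--Mart\'{\i}n, sketched in its Remark \ref{rem-thm-1.01}) gets all of this by a different mechanism: it shows each analytic function $h_a=h+ag$, $|a|\le 1$, satisfies Ahlfors's criterion \eqref{eq-Ahlfors-1974-1}, so each has a Becker--Ahlfors quasiconformal extension $H_a$; it then sets $G=H_1-H_0$ and $F=H_0+\overline{G}$ and proves injectivity of this explicit extension. Finally, the intermediate claim that $\widetilde f(\partial\triangle)$ is a quasicircle under \eqref{eq-Hernandez-Martin-2013-1} alone is not addressed by your outline at all; the paper proves the analogous statement (its Theorem \ref{thm-1.02}) by a separate approximation argument with $f_r(z)=f(rz)$ and Lemmas \ref{lem2.01}--\ref{lem2.02}, not by the exterior dilatation bound.
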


The following result was derived by Bravo $et \ al.$ \cite{Bravo-Hernandez-Venegas-2017}, which generalized Theorem \ref{thm-Hernandez-Martin-2015-1}. It also can be regarded  as a generalization  of harmonic analogue of Theorem \ref{thm-Ahlfors-1974-0}.

\begin{thm}\label{thm-Bravo-Hernandez-Venegas-2017}
Let $f=h+\overline{g}$ be a sense-preserving harmonic mapping in $\triangle$ with the second dilatation $\omega$. If $f$ satisfies
\begin{equation}\label{eq-thm-Bravo-Hernandez-Venegas-2017}
\abs{c| z |^{2}+(1-|z|^{2}) z {P_f}}+\frac{|z \omega^{\prime}(z) (1-|z|^{2})|}{1-|\omega(z)|^{2}} \leq 1 \ \ ( z \in \triangle),
 \end{equation}
then $f$ is univalent in  $\triangle$.
\end{thm}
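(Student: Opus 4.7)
The plan is to mirror Ahlfors's derivation of Theorem~\ref{thm-Ahlfors-1974-2} from Theorem~\ref{thm-Ahlfors-1974-0} via the substitution \eqref{eq-Ahlfors-c+1}, adapted to the harmonic setting in the spirit of Hern\'andez--Mart\'in's proofs of Theorems~\ref{thm-Hernandez-Martin-2015-1} and~\ref{thm-Hernandez-Martin-2013-1}. More precisely, I would produce $f$ as the restriction to $\triangle$ of a quasiconformal homeomorphism $F$ of $\mathbb{C}$, from which injectivity of $f$ is immediate.

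A first standard reduction removes the borderline case: replacing $f$ by $f_r(z)=f(rz)$ with $r\in(0,1)$, and noting that $\omega_r(z)=\omega(rz)$ together with $P_{f_r}(z)=r\,P_f(rz)$, one checks that \eqref{eq-thm-Bravo-Hernandez-Venegas-2017} holds for $f_r$ with a constant strictly less than $1$. Univalence of $f$ then follows from univalence of each $f_r$ by letting $r\to 1^-$, so I may assume the hypothesis in the strict form $\leq k<1$. Mimicking the extensions \eqref{eq-Ahlfors-1974-4} and \eqref{eq-Hernandez-Martin-2013-2}, which correspond to the case $c=0$ of the Ahlfors family \eqref{eq-Ahlfors-c+1}, I would set
\[
F(z)=\begin{cases} f(z), & |z|\leq 1,\\[4pt] f\!\bigl(\tfrac{1}{\overline z}\bigr)+U\!\bigl(\tfrac{1}{\overline z}\bigr), & |z|>1,\end{cases}
\]
where $U(\zeta)$ is an Ahlfors-type combination of $(1-|\zeta|^2)h'(\zeta)/\overline\zeta$ and $\overline{(1-|\zeta|^2)g'(\zeta)/\zeta}$, normalized so as to encode the parameter $c$.

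The decisive step is the computation of the Beltrami coefficient of $F$ on $\{|z|>1\}$. Substituting the identity \eqref{eq-Pf}, namely $P_f=P_h-\overline\omega\omega'/(1-|\omega|^2)$, into $F_{\overline z}/F_z$ splits the bound $|F_{\overline z}|\le k|F_z|$ into two contributions: an analytic piece that reproduces, after the change of variables $\zeta=1/\overline z$, the Ahlfors-type quantity $\bigl|c|z|^2+z(1-|z|^2)P_f\bigr|$, and a dilatation piece driven by $\omega'$ that is controlled by $|z\omega'|(1-|z|^2)/(1-|\omega|^2)$. The main obstacle is carrying out this splitting for every admissible $c$, not just for $c=0$ as in Theorem~\ref{thm-Hernandez-Martin-2013-1}, together with verifying that $F$ is continuous across $|z|=1$. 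Once the splitting is in hand, the hypothesis \eqref{eq-thm-Bravo-Hernandez-Venegas-2017} becomes exactly equivalent to $|F_{\overline z}|\le k|F_z|$, so $F$ is quasiconformal on $\mathbb{C}$, hence injective, and $f=F|_\triangle$ is univalent.
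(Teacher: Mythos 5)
There is a genuine gap: your strategy cannot work under the stated hypotheses. Theorem~\ref{thm-Bravo-Hernandez-Venegas-2017} does \emph{not} assume $\|\omega\|_{\infty}<1$, and condition \eqref{eq-thm-Bravo-Hernandez-Venegas-2017} is perfectly compatible with $|\omega(z)|\to 1$ as $|z|\to 1$. In that case $f$ is not quasiconformal even on $\triangle$ (inside the disk the Beltrami coefficient of your $F$ is exactly $\omega$), so $f$ cannot be the restriction of a quasiconformal homeomorphism of $\mathbb{C}$, and injectivity cannot be obtained this way. This is precisely why every quasiconformal-extension statement in the paper (Theorems \ref{thm-Hernandez-Martin-2013-1}, \ref{thm-Chen-Que-2017-1}, \ref{thm-1.01}) carries the extra hypotheses $\|\omega\|_{\infty}<1$ and \eqref{eq-Hernandez-Martin-2013-3}. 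Your claim that the hypothesis ``becomes exactly equivalent to $|F_{\overline z}|\le k|F_z|$'' is also not what the splitting yields: as in \eqref{eq-2.08}, the estimate for $|\mu_F|$ on $|z|>1$ has the form $\bigl(k+\|\omega\|_{\infty}-\cdots\bigr)/\bigl(1-k\|\omega\|_{\infty}-\cdots\bigr)$, which is below $1$ only when $k<(1-\|\omega\|_{\infty})/(1+\|\omega\|_{\infty})$. A secondary problem is the dilation reduction: for $c\neq 0$ the term $c|z|^{2}$ does not rescale under $z\mapsto rz$ (the hypothesis at $rz$ controls $c r^{2}|z|^{2}+(1-r^{2}|z|^{2})rzP_f(rz)$, not $c|z|^{2}+(1-|z|^{2})rzP_f(rz)$), so it is not clear that $f_r$ satisfies \eqref{eq-thm-Bravo-Hernandez-Venegas-2017} with a constant strictly below $1$.

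The paper obtains this theorem as the special case $\sigma(z)=(c+1)\overline z/(1-|z|^{2})$ of Theorem~\ref{thm-1.00}, whose proof is a pure univalence (shear) argument with no extension and no strictness needed. One first checks via \eqref{eq-Pf} and the triangle inequality that $h$ satisfies Ahlfors's analytic criterion \eqref{eq-Ahlfors-1974-0}. Then for each $a\in\triangle$ one passes to $f_a=f+a\overline f=h_a+\overline{g_a}$ with $h_a=h+ag$: since $J_{f_a}=(1-|a|^{2})J_f$ one has $P_{f_a}=P_f$, and since $\omega_a=\varphi_a\circ\omega$ with $\varphi_a$ a disk automorphism, the quantity $|\omega'|/(1-|\omega|^{2})$ is unchanged; hence $f_a$ satisfies the same hypothesis and each $h_a$ is univalent by the analytic criterion. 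Hurwitz's theorem extends this to $|a|=1$, and the stable-univalence theorem of Hern\'andez and Mart\'{\i}n then converts univalence of $h+\varepsilon g$ for all $|\varepsilon|=1$ into univalence of $h+\varepsilon\overline g$, in particular of $f$. If you want to salvage your approach, you would have to add the hypotheses $\|\omega\|_{\infty}<1$ and \eqref{eq-Hernandez-Martin-2013-3}, which proves a strictly weaker statement.
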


Recently, Chen and Que \cite{Chen-Que-2017} generalized Theorem \ref{thm-Hernandez-Martin-2013-1} as follows.
\begin{thm}\label{thm-Chen-Que-2017-1}
Let $f=h+\overline{g}$ be a sense-preserving harmonic mapping in $\triangle$ with $\|\omega\|_{\infty}<1$. If $f$ satisfies \eqref{eq-Hernandez-Martin-2013-1}, then for all $|\lambda| \leq 1, f_{\lambda}=h+\lambda \overline{g}$ has a continuous and injective extension $\widetilde{f_{\lambda}}$ to $\overline{\triangle}$ and the mapping
\begin{equation}\label{eq-Chen-Que-2017-1}
F_{\lambda}(z)=
\begin{cases}\widetilde{f}_{\lambda}(z)  &  (|z| \leq 1), \\
f_{\lambda}\left(\frac{1}{\bar{z}}\right)+U_{\lambda}\left(\frac{1}{\overline{z}}\right)  & (|z|>1),
\end{cases}
\end{equation}
is a homeomorphism of $\mathbb{C}$ onto itself, where
$$
U_{\lambda}(z)=\frac{h^{\prime}(z)}{\overline{z}}\left(1-|z|^{2}\right)+\lambda \frac{\overline{g^{\prime}(z)}}{z}\left(1-|z|^{2}\right) \quad (z \in \triangle \backslash\{0\}).
$$
Moreover, if $k$ satisfies   \eqref{eq-Hernandez-Martin-2013-3}, then  the family of mappings $F_{\lambda}(z)$ are $K$-quasiconformal mappings in $\mathbb{C}$, where
\begin{equation}\label{eq-Chen-Que-2017-2}
K=\frac{1+k+|\lambda|\|\omega\|_{\infty}(1-k)}{1-k-|\lambda|\|\omega\|_{\infty}(1+k)}.
\end{equation}
\end{thm}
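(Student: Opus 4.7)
The strategy I would take is to observe that $f_\lambda$ is itself a sense-preserving harmonic mapping to which Theorem~\ref{thm-Hernandez-Martin-2013-1} can be applied directly. Writing $f_\lambda = h + \overline{\overline{\lambda}g}$ realizes $f_\lambda$ as a harmonic mapping with analytic/anti-analytic parts $h$ and $\overline{\lambda}g$ and second dilatation $\omega_\lambda = \overline{\lambda}\omega$; in particular $|\omega_\lambda|=|\lambda||\omega|$ and $\|\omega_\lambda\|_\infty = |\lambda|\|\omega\|_\infty <1$, so $f_\lambda$ is sense-preserving. Moreover, the formula \eqref{eq-Chen-Que-2017-1} and the expression for $U_\lambda$ are precisely what \eqref{eq-Hernandez-Martin-2013-2} and $U$ become under the substitution $g\mapsto \overline{\lambda}g$, so no extra work is needed to identify the proposed extension.

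The substantive step is to show that the hypothesis \eqref{eq-Hernandez-Martin-2013-1} for $f$ passes to $f_\lambda$ with the \emph{same} constant $k$. Using \eqref{eq-Pf} one computes
\[
P_{f_\lambda} \;=\; P_h - \frac{|\lambda|^2\overline{\omega}\,\omega'}{1-|\lambda|^2|\omega|^2} \;=\; P_f + \frac{(1-|\lambda|^2)\,\overline{\omega}\,\omega'}{(1-|\omega|^2)(1-|\lambda|^2|\omega|^2)},
\]
where in the second equality $P_h=P_f+\overline{\omega}\omega'/(1-|\omega|^2)$ is used. Combining the triangle inequality with $|\omega_\lambda'|=|\lambda||\omega'|$ and $|\omega_\lambda|^2=|\lambda|^2|\omega|^2$, and regrouping, yields
\[
(1-|z|^2)|P_{f_\lambda}|+\frac{(1-|z|^2)|\omega_\lambda'|}{1-|\omega_\lambda|^2}
\;\leq\;
(1-|z|^2)|P_f|+\frac{(1-|z|^2)|\omega'|}{1-|\omega|^2}\cdot\frac{(|\omega|+|\lambda|)(1-|\omega||\lambda|)}{1-|\omega|^2|\lambda|^2}.
\]

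The main (and essentially the only) obstacle is to verify that the last factor is bounded by $1$. Cancelling $1-|\omega||\lambda|>0$ reduces this to $|\omega|+|\lambda|\leq 1+|\omega||\lambda|$, i.e.\ $(1-|\omega|)(1-|\lambda|)\geq 0$, which is immediate. Hence $f_\lambda$ satisfies \eqref{eq-Hernandez-Martin-2013-1} with the same $k$, and Theorem~\ref{thm-Hernandez-Martin-2013-1} applied to $f_\lambda$ gives the continuous injective extension $\widetilde{f_\lambda}$ and shows that $F_\lambda$ is a homeomorphism of $\mathbb{C}$ onto itself.

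For the quasiconformality conclusion, since $t\mapsto (1-t)/(1+t)$ is decreasing and $\|\omega_\lambda\|_\infty=|\lambda|\|\omega\|_\infty\leq \|\omega\|_\infty$, the hypothesis \eqref{eq-Hernandez-Martin-2013-3} for $\omega$ yields the corresponding inequality $k<(1-\|\omega_\lambda\|_\infty)/(1+\|\omega_\lambda\|_\infty)$ for $\omega_\lambda$. Theorem~\ref{thm-Hernandez-Martin-2013-1} then upgrades $F_\lambda$ to a $K$-quasiconformal mapping of $\mathbb{C}$ with dilatation bound
\[
K \;=\; \frac{1+k+\|\omega_\lambda\|_\infty(1-k)}{1-k-\|\omega_\lambda\|_\infty(1+k)} \;=\; \frac{1+k+|\lambda|\|\omega\|_\infty(1-k)}{1-k-|\lambda|\|\omega\|_\infty(1+k)},
\]
which is exactly \eqref{eq-Chen-Que-2017-2}, completing the reduction.
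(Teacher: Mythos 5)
Your proof is correct, and the key inequality you establish --- that \eqref{eq-Hernandez-Martin-2013-1} for $f$ implies the same inequality for $f_\lambda$ with the same constant $k$, via the identity $\tfrac{(|\omega|+|\lambda|)(1-|\lambda||\omega|)}{1-|\lambda|^2|\omega|^2}=\tfrac{|\omega|+|\lambda|}{1+|\lambda||\omega|}\le 1$ --- is exactly the computation the paper performs in \eqref{eq-2.01} (there with a general weight $\sigma$ in place of $\overline{z}/(1-|z|^2)$). Where you diverge is in what you do with that inequality. You then treat $f_\lambda=h+\overline{\overline{\lambda}g}$ as a fresh sense-preserving harmonic mapping with dilatation $\omega_\lambda=\overline{\lambda}\omega$ and invoke Theorem~\ref{thm-Hernandez-Martin-2013-1} as a black box; since \eqref{eq-Hernandez-Martin-2013-2} and its $U$ specialize precisely to \eqref{eq-Chen-Que-2017-1} and $U_\lambda$ under $g\mapsto\overline{\lambda}g$, and since $\|\omega_\lambda\|_\infty=|\lambda|\|\omega\|_\infty$ turns the constant of Theorem~\ref{thm-Hernandez-Martin-2013-1} into exactly \eqref{eq-Chen-Que-2017-2}, everything follows at once. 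The paper instead re-derives the conclusion from scratch in the proof of Theorem~\ref{thm-1.01}: it constructs the homeomorphic extension through the family $h_a=h+ag$ and then estimates the Beltrami coefficient of $F_\lambda$ directly on $|z|>1$ (the chain \eqref{eq-2.05}--\eqref{eq-2.08}, followed by a monotonicity argument for $\rho(x)$ and Lehto's gluing lemma). The direct computation is unavoidable in the paper because its weight $\sigma$ is general and Theorem~\ref{thm-Hernandez-Martin-2013-1} only covers $\sigma=\overline{z}/(1-|z|^2)$; for the specific statement at hand your reduction is shorter and cleaner, at the cost of not generalizing to arbitrary $\sigma$. One small point worth making explicit in your write-up: $f_\lambda$ is indeed sense-preserving because $J_{f_\lambda}=|h'|^2(1-|\lambda|^2|\omega|^2)>0$, which you implicitly use when applying Theorem~\ref{thm-Hernandez-Martin-2013-1} to it.
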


We observe that the following result obtained by Chen and Que  \cite{Chen-Que-2017} is a quasiconformal extension of  harmonic
Teichm\"{u}ller mappings, whose maximal dilatation estimate is asymptotically sharp.

\begin{thm}\label{thm-Chen-Que-2017-2}
Let $f$ be a sense-preserving harmonic mapping in  $\triangle$ with a representation $f=h+\alpha \overline{h}$, where $h$ is a locally univalent analytic function in $\triangle$ and $\alpha$ is a constant with $|\alpha|<1$. Assume that
\begin{equation}\label{eq-Chen-Que-2017-3}
\left|P_{h}(z)\left(1-|z|^{2}\right)\right| \leq k<1  \quad (z \in \triangle).
\end{equation}
Then $f$ is a harmonic Teichm\"{u}ller mappings of $\triangle$ and has a continuous and homeomorphic extension $\widetilde{f}$ to $\overline{\triangle}$. Moreover, the mapping
\begin{equation}\label{eq-Chen-Que-2017-4}
F(z)= \begin{cases}
\widetilde{f}(z) &  \quad  (|z| \leq 1), \\
f\left(\frac{1}{\overline{z}}\right)+U_{\alpha}\left(\frac{1}{\overline{z}}\right) &  \quad (|z|>1),
\end{cases}
\end{equation}
is a $K$-quasiconformal mapping of $\mathbb{C}$ with
\begin{equation}\label{eq-Chen-Que-2017-5}
K=\frac{(1+k)(1+|\alpha|)}{(1-k)(1-|\alpha|)},
\end{equation}
where
$$
U_{\alpha}(z)=\frac{h^{\prime}(z)\left(1-|z|^{2}\right)}{\bar{z}}+\frac{\alpha \overline{h^{\prime}(z)}\left(1-|z|^{2}\right)}{z} \quad (z \in \triangle \backslash\{0\}).
$$
The maximal dilatation estimate of the quasiconformal extension $F$ is asymptotically sharp in $k$, and extremal mappings are of the form $f(z)=a z+b \overline{z}$, where $a$ and $b$ are two nonvanishing constants.
\end{thm}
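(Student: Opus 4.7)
The plan is to reduce the problem to the analytic case by exploiting the fact that $f=h+\alpha\bar h$ has constant second dilatation $\omega_f=\bar\alpha$. The key structural observation is that the proposed extension \eqref{eq-Chen-Que-2017-4} can be rewritten as
\[
F(z)\;=\;\psi\bigl(H(z)\bigr)\;=\;H(z)+\alpha\,\overline{H(z)},
\]
where $\psi(w)=w+\alpha\bar w$ is a global affine homeomorphism of $\mathbb{C}$ (since $|\alpha|<1$), and $H$ is the Becker--Ahlfors quasiconformal extension of $h$. The hypothesis $|P_h(z)|(1-|z|^2)\le k$ trivially implies $\bigl|(1-|z|^2)zP_h(z)\bigr|\le k$, so Theorem~\ref{thm-Ahlfors-1974-2} (with $c=0$) supplies $H$ as a $\tfrac{1+k}{1-k}$-quasiconformal self-homeomorphism of $\mathbb{C}$, satisfying $H(z)=h(w)+h'(w)(1-|w|^2)/\bar w$ for $|z|>1$, where $w=1/\bar z$. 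A direct algebraic check against the expression for $U_\alpha(1/\bar z)$ confirms the identification $F=\psi\circ H$ throughout $\mathbb{C}$.

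Granted this identification, the harmonic Teichm\"{u}ller property of $f$ follows at once: $h$ is univalent in $\triangle$ by Becker's criterion \eqref{eq-1.01}, and $\omega_f=\bar\alpha$ is constant with $|\bar\alpha|<1$, so $f$ fits the definition. The continuous extension $\widetilde f$ of $f$ to $\overline{\triangle}$ and the fact that $F$ is a self-homeomorphism of $\mathbb{C}$ are inherited from the corresponding properties of $H$ and the global homeomorphy of $\psi$.

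Next comes the dilatation estimate. Writing $A=H_z$ and $B=H_{\bar z}$, direct differentiation yields $F_z=A+\alpha\bar B$ and $F_{\bar z}=B+\alpha\bar A$, so
\[
|\mu_F(z)|\;=\;\frac{|B+\alpha\bar A|}{|A+\alpha\bar B|}\;\le\;\frac{|B|+|\alpha|\,|A|}{|A|-|\alpha|\,|B|}\;=\;\frac{|\mu_H(z)|+|\alpha|}{1+|\alpha|\,|\mu_H(z)|}.
\]
The map $t\mapsto(t+|\alpha|)/(1+|\alpha|t)$ is increasing on $[0,1)$. Combined with $|\mu_H|\le k$ almost everywhere on $\mathbb{C}$ (with $\mu_H\equiv 0$ on $\triangle$ and $|\mu_H|\le k$ outside, by the Becker--Ahlfors construction), this gives $|\mu_F|\le(k+|\alpha|)/(1+k|\alpha|)$. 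Converting to the quasiconformal constant through $K_F=(1+|\mu_F|)/(1-|\mu_F|)$ yields precisely the claimed $K=\frac{(1+k)(1+|\alpha|)}{(1-k)(1-|\alpha|)}$.

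For asymptotic sharpness, consider $h(z)=az$, for which $P_h\equiv 0$, so the hypothesis holds with arbitrarily small $k$. Then $f(z)=az+a\alpha\bar z$ is affine with complex dilatation of constant modulus $|\alpha|$, and its optimal extension dilatation equals $(1+|\alpha|)/(1-|\alpha|)=\lim_{k\to 0^+}K(k)$; hence the bound cannot be improved asymptotically, and the extremals are exactly of the claimed form $az+b\bar z$. The main technical obstacle is the algebraic verification of the identity $F=\psi\circ H$, which is what allows the entire harmonic problem to be fed into the classical Becker--Ahlfors machinery; once this identity is in place, both the homeomorphy and the dilatation bound become routine applications of the composition inequality for affine dilatations.
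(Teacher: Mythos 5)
Your structural idea is sound and the identity $F=\psi\circ H$ with $\psi(w)=w+\alpha\overline{w}$ does check out against $U_{\alpha}$; this is essentially the same mechanism the paper uses (it proves the more general Theorem~\ref{thm-1.03} and recovers this statement by taking $\sigma(z)=\overline{z}/(1-|z|^2)$), with your factorization making the homeomorphism part cleaner. However, there is a genuine error in your key dilatation estimate. From $F_z=A+\alpha\overline{B}$ and $F_{\bar z}=B+\alpha\overline{A}$, the triangle inequality gives
\[
|\mu_F|\;\le\;\frac{|B|+|\alpha|\,|A|}{|A|-|\alpha|\,|B|}\;=\;\frac{|\mu_H|+|\alpha|}{1-|\alpha|\,|\mu_H|},
\]
with a \emph{minus} sign in the denominator, not the plus sign you wrote. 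The displayed equality in your proof is false, and the corrected bound $(k+|\alpha|)/(1-k|\alpha|)$ is strictly weaker than $(k+|\alpha|)/(1+k|\alpha|)$; it does not yield the claimed constant \eqref{eq-Chen-Que-2017-5} and can even exceed $1$ (take $k=|\alpha|=0.9$), so the naive triangle inequality cannot close the argument.

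The repair is to use the Möbius structure rather than crude absolute values: since $\psi_{\overline w}/\psi_w=\alpha$ is constant, the composition formula gives
\[
\mu_F\;=\;\frac{\mu_H+\alpha\tau}{1+\alpha\tau\,\overline{\mu_H}},\qquad \tau=\frac{\overline{H_z}}{H_z},\quad |\tau|=1,
\]
so $\mu_F$ is the image of $\mu_H$ under a disk automorphism whose parameter has modulus $|\alpha|$. The Schwarz--Pick-type estimate for such automorphisms (Lemma~\ref{lem2.03} of the paper) then gives $|\mu_F|\le(|\mu_H|+|\alpha|)/(1+|\alpha|\,|\mu_H|)\le(k+|\alpha|)/(1+k|\alpha|)$, which converts to exactly \eqref{eq-Chen-Que-2017-5}. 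This is precisely the step the paper performs in Step~3 of the proof of Theorem~\ref{thm-1.03}: it writes $\mu_{F_\alpha}=(\alpha+\eta)/(1+\overline{\alpha}\eta)$ with $|\eta|\le k$, normalizes by $\theta=\overline{\alpha}/|\alpha|$, and invokes Lemma~\ref{lem2.03}. Your sharpness discussion via $h(z)=az$ is fine and agrees with the paper's extremal family $f=mz+n\overline{z}$.
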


In a recent paper, Hu and Fan \cite{Hu-Fan-2021} generalized  Theorem \ref{thm-Chen-Que-2017-1} by posing one of   harmonic version of the classical Becker's criteria. We present it here in a slightly  modified form.

\begin{thm}\label{thm-Hu-Fan-2021}
 Let $f=h+\overline{g}$ be a sense-preserving harmonic mapping in $\triangle$. Assume that
 \begin{equation}\label{eq-Hu-Fan-2021-1}
\abs{c| z |^{2}+(1-|z|^{2}) z {P_f}}+\frac{|z \omega^{\prime}(z) (1-|z|^{2})|}{1-|\omega(z)|^{2}} \leq k<1 \quad(|c|\leq k;\, z \in \triangle).
 \end{equation}
Then $f_{\lambda}=h+\lambda \overline{g}\; (|\lambda| \leq 1)$ has a continuous and injective extension $\widetilde{f_{\lambda}}$ to $\overline{\triangle}$ and the family of mappings $F_{\lambda}(z)$ defined by \eqref{eq-Chen-Que-2017-1} are $K$-quasiconformal mappings of $\mathbb{C}$ onto themselves.
Moreover, if $k$ satisfies the conditions  \eqref{eq-Hernandez-Martin-2013-3}, then  the family of mappings $F_{\lambda}(z)$ are $K$-quasiconformal mappings of $\mathbb{C}$, where $K$ is given by   \eqref{eq-Chen-Que-2017-2}.
\end{thm}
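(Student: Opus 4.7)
The plan is to follow the template of Chen and Que's proof of Theorem \ref{thm-Chen-Que-2017-1}, modified to accommodate the Ahlfors parameter $c$ in the same spirit as Theorem \ref{thm-Ahlfors-1974-2} generalizes Becker's analytic extension criterion from $c=0$ to the full range $|c|\leq k$. The work divides into three pieces: a univalence/extension statement for $f_\lambda$, a Beltrami coefficient estimate on the exterior disk $\triangle^{\ast} := \{|z|>1\}$, and the final gluing together with the sharpening of $K$ under \eqref{eq-Hernandez-Martin-2013-3}.

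First I would verify univalence of $f$ in $\triangle$: since \eqref{eq-Hu-Fan-2021-1} with $k<1$ is strictly stronger than the hypothesis \eqref{eq-thm-Bravo-Hernandez-Venegas-2017} of Theorem \ref{thm-Bravo-Hernandez-Venegas-2017}, $f$ is univalent in $\triangle$. The continuous extension $\widetilde{f_\lambda}$ to $\overline{\triangle}$ and the injectivity statement for $f_\lambda$ itself will fall out a posteriori once $F_\lambda$ is shown to be a global quasiconformal homeomorphism.

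The crux is the dilatation bound on $\triangle^{\ast}$. Setting $\zeta = 1/\overline{z}$, so that $\zeta\in\triangle$ when $z\in\triangle^{\ast}$, one rewrites $F_\lambda = \Phi_h + \lambda\,\overline{\Phi_g}$ on $\triangle^{\ast}$, where $\Phi_h(z) = h(\zeta) + h'(\zeta)(1-|\zeta|^2)/\overline{\zeta}$ and $\Phi_g(z) = g(\zeta) + g'(\zeta)(1-|\zeta|^2)/\overline{\zeta}$ are the Becker-type reflections of the analytic and co-analytic parts. A direct chain-rule computation gives
\begin{align*}
(F_\lambda)_z &= h'(\zeta) \;-\; \lambda\,\frac{\overline{\zeta}^{\,2}(1-|\zeta|^2)}{\zeta}\,\overline{g''(\zeta)},\\
(F_\lambda)_{\overline{z}} &= -\frac{\zeta^2(1-|\zeta|^2)}{\overline{\zeta}}\,h''(\zeta)\;+\; \lambda\,\overline{g'(\zeta)}.
\end{align*}
Factoring $h'(\zeta)$ from both expressions and using the identity $g''/h' = \omega P_h + \omega' = \omega P_f + \omega'/(1-|\omega|^2)$, which follows from \eqref{eq-Pf}, I would recast the ratio $|(F_\lambda)_{\overline{z}}|/|(F_\lambda)_z|$ in terms of exactly the quantities $c|\zeta|^2 + \zeta(1-|\zeta|^2)P_f(\zeta)$, $\omega(\zeta)$ and $|\zeta\omega'(\zeta)|(1-|\zeta|^2)/(1-|\omega(\zeta)|^2)$ appearing in \eqref{eq-Hu-Fan-2021-1}; invoking the hypothesis (together with $|\lambda|\leq 1$) then yields $|\mu_{F_\lambda}(z)|<1$ throughout $\triangle^{\ast}$.

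Finally, the correction $U_\lambda$ contains the factor $(1-|\zeta|^2)$ that vanishes on $\partial\triangle$, so $F_\lambda$ matches the boundary values of $f_\lambda$ continuously across the unit circle; combined with the bounded dilatation on each side and the standard removability of a quasicircle for quasiconformal maps, $F_\lambda$ extends to a globally $K$-quasiconformal homeomorphism of $\mathbb{C}$. Under the additional assumption \eqref{eq-Hernandez-Martin-2013-3}, tracking the constants in the $\triangle^{\ast}$ estimate and combining with the bound $|\mu_{f_\lambda}|\leq |\lambda|\|\omega\|_\infty$ on $\triangle$ recovers exactly the $K$ in \eqref{eq-Chen-Que-2017-2}. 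The principal technical obstacle is the dilatation computation on $\triangle^{\ast}$: because \eqref{eq-Hu-Fan-2021-1} bundles $c|z|^2+(1-|z|^2)zP_f$ together with the $\omega'$-contribution into a single inequality, one must carefully arrange the factored expressions for $(F_\lambda)_z$ and $(F_\lambda)_{\overline{z}}$ before applying the triangle inequality, so that neither the $c$-term nor the $\omega'$-term prevents the estimate from closing strictly below $1$.
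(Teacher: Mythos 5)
Your overall architecture (interior and exterior dilatation estimates plus gluing, in the Chen--Que style) matches in spirit how the paper obtains this statement: not by a standalone argument, but as the specialization of Theorem \ref{thm-1.01} to $\sigma(z)=(c+1)\overline{z}/(1-|z|^2)$, whose proof in Section \ref{sec-3} consists of exactly those steps. The genuine gap is at the step you yourself flag as the crux. The parameter $c$ does not appear anywhere in the formula \eqref{eq-Chen-Que-2017-1} for $F_\lambda$, so it cannot appear in $\mu_{F_\lambda}$: differentiating the Becker-type reflection $U_\lambda(z)=h'(z)(1-|z|^2)/\overline{z}+\lambda\,\overline{g'(z)}(1-|z|^2)/z$ produces (up to unimodular factors and the $\omega$-terms) the quantity $\zeta(1-|\zeta|^2)P_h(\zeta)$, not $c|\zeta|^2+\zeta(1-|\zeta|^2)P_h(\zeta)$. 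To invoke \eqref{eq-Hu-Fan-2021-1} you must therefore add and subtract $c|\zeta|^2$, which costs an extra $|c|\,|\zeta|^2\le k$; the numerator of your Beltrami bound then only reaches $2k+|\lambda|\|\omega\|_\infty$ and the estimate does not close below $1$ once $k\ge 1/2$. This is not a repairable bookkeeping issue: the hypothesis with $c\ne 0$ genuinely permits $|(1-|z|^2)zP_h|$ up to $2k>1$, where the plain Becker reflection fails even to be locally injective. The estimate closes only if the reflection is built from the same $\sigma$ as the hypothesis, i.e.\ $U_\lambda=h'/\sigma+\lambda\,\overline{g'}/\overline{\sigma}$ with $\sigma=(c+1)\overline{z}/(1-|z|^2)$; then the chain rule yields precisely $\sigma P_h+\sigma^2-\sigma_z$ over $\sigma_{\overline z}$ as in \eqref{eq-2.05}--\eqref{eq-2.07}, and \eqref{eq-Hu-Fan-2021-1} applies directly. (The missing factor $1/(c+1)$ in \eqref{eq-Chen-Que-2017-1} is a defect of the paper's restatement that you should not reproduce.)

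Two further points are asserted rather than proved. First, injectivity and continuity of the glued map are not consequences of ``standard removability'': removability of quasicircles applies to homeomorphisms, so global injectivity of $F_\lambda$ must be established beforehand. The paper does this (Remark \ref{rem-thm-1.01}) by checking that every analytic slice $h_a=h+ag$, $|a|\le 1$, satisfies the analytic Ahlfors condition \eqref{eq-Ahlfors-1974-1} --- the computation \eqref{eq-2.02}, using $P_{f_a}=P_f$ and the invariance of $|\omega_a'|/(1-|\omega_a|^2)$ --- and then assembling $F_\lambda$ from the Ahlfors extensions $H_a$ by the Hern\'andez--Mart\'{\i}n limiting argument; your proposal omits this entirely. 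Second, ``tracking the constants'' to reach \eqref{eq-Chen-Que-2017-2} is where the remaining work lies: one must retain the saving $-(1-\|\omega\|_\infty)|\omega^*|$ from the numerator estimate \eqref{eq-2.06} against the cost $+(1-\|\omega\|_\infty)|\omega^*|$ in the denominator estimate \eqref{eq-2.07}, and then use \eqref{eq-Hernandez-Martin-2013-3} to show the resulting ratio is monotone in $|\omega^*|$; a crude triangle inequality yields a strictly larger $K$.
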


\begin{remark}
{\rm  In Theorem \ref{thm-Chen-Que-2017-1},  the condition \eqref{eq-Hernandez-Martin-2013-1} for $P_f$ and \eqref{eq-Hernandez-Martin-2013-3}  for $k$, imply  that $\widetilde{f}_{\lambda}(\partial\triangle)$  is a quasicircle.
In Theorem \ref{thm-Hu-Fan-2021},  if inequality \eqref{eq-Hu-Fan-2021-1} holds for $P_f$, it shows that $\widetilde{f}_{\lambda}(\partial\triangle)$  is a quasicircle.
 We would like to draw
reader's attention to  Theorem \ref{thm-1.02}, it is shown that $\widetilde{f}_{\lambda}(\partial\triangle)$  is a quasicircle, if inequality \eqref{eq-1.05} holds  for $P_f$.}
\end{remark}

Motivated essentially by the above discussions, we aim at deriving general harmonic analogues of Theorems \ref{thm-Ahlfors-1974-0} and \ref{thm-Ahlfors-1974-1} by using the pre-Schwarzian derivative of harmonic mappings.

\begin{theorem}\label{thm-1.00}
Let $f=h+\overline{g}$ be a sense-preserving harmonic mapping in $\triangle$ and let $\omega$ be its second complex dilatation.
Let $\sigma$ be a continuous function in  $\triangle$, which satisfies the following conditions: (i) $\sigma_{z}$ and $\sigma_{\overline{z}}$ exist in $\triangle$ a.e.; (ii) $\frac{1}{\sigma}=0$ on $\partial \triangle$; (iii) $\frac{\sigma_{\overline{z}}}{\sigma^{2}} \neq 0$ in $\overline{\triangle}$. If $f$ satisfies the condition
\begin{equation}\label{eq-1.04}
\left|\sigma P_f+\sigma^{2}-\sigma_{z}\right| +  \frac{|\sigma \omega^{\prime}|}{1-|\omega|^2}  \leq  \left|\sigma_{\overline{z}}\right| \ \ (z \in \triangle),
\end{equation}
then $f$ is univalent in  $\triangle$.
\end{theorem}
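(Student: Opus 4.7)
The plan is to mirror Ahlfors's original approach for Theorem \ref{thm-Ahlfors-1974-0}: construct an explicit continuous extension $F$ of $f$ to all of $\mathbb{C}$, verify that $F$ is an orientation-preserving local homeomorphism off $\partial\triangle$, and then deduce by a topological degree argument that $F$ is a global homeomorphism. Univalence of $f = F|_{\triangle}$ is immediate from this.

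Motivated by the auxiliary function $u = \phi'/\sigma$ from Theorem \ref{thm-Ahlfors-1974-1}, the natural harmonic analogue is
\[
U(z) = \frac{h'(z)}{\sigma(z)} + \overline{\left(\frac{g'(z)}{\sigma(z)}\right)}, \qquad z \in \triangle \setminus \{0\},
\]
and I set
\[
F(z) = \begin{cases} f(z), & |z| \leq 1, \\ f(1/\overline{z}) + U(1/\overline{z}), & |z| > 1. \end{cases}
\]
Condition (ii), $1/\sigma = 0$ on $\partial\triangle$, forces $U(w) \to 0$ as $|w| \to 1^{-}$, so $F$ is continuous on $\mathbb{C}$. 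Inside $\triangle$, $F = f$ is sense-preserving harmonic, hence already a local homeomorphism since $J_f > 0$.

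The heart of the proof is the analysis on $\{|z| > 1\}$. Writing $w = 1/\overline{z}$ and $\Psi(w, \overline{w}) = f(w) + U(w)$, the chain rule yields $F_z = -\Psi_{\overline{w}}/z^{2}$ and $F_{\overline{z}} = -\Psi_w/\overline{z}^{\,2}$. I will compute $\Psi_w$ and $\Psi_{\overline{w}}$ term by term using the Wirtinger calculus, remembering that $h, g$ are holomorphic in $w$ while $\sigma$ depends on both $w$ and $\overline{w}$. Using $g' = \omega h'$ and substituting the identity $P_h = P_f + \overline{\omega}\omega'/(1 - |\omega|^{2})$ that comes from \eqref{eq-Pf}, the various contributions can be regrouped so that the three quantities $\sigma P_f + \sigma^{2} - \sigma_w$, $\sigma_{\overline{w}}$ and $\sigma\omega'/(1-|\omega|^{2})$---precisely those appearing in \eqref{eq-1.04}---are exposed. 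A careful triangle-inequality estimate then leverages \eqref{eq-1.04} to produce
\[
|F_{\overline{z}}| \leq |F_z| \qquad \text{on } \{|z| > 1\},
\]
so that $F$ is orientation-preserving there (this matches Ahlfors's analytic case, where the extension is designed to be orientation-preserving throughout $\mathbb{C}$ rather than only on $\triangle$), and is a local homeomorphism wherever the inequality is strict.

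To pass from local to global, $F$ is proper on $\mathbb{C}$ ($|F(z)| \to \infty$ as $|z| \to \infty$, as one reads off the explicit formula) and a local homeomorphism on $\mathbb{C} \setminus \partial\triangle$; a standard winding-number argument on large disks, as in Ahlfors's treatment, shows that $F$ has degree $+1$ and is therefore a global homeomorphism of $\mathbb{C}$ onto itself, from which $f$ is univalent in $\triangle$. The principal technical obstacle is the Wirtinger differentiation of $\Psi$ and the algebraic recombination needed to split $\Psi_w$ and $\Psi_{\overline{w}}$ into pieces that interact with \emph{both} summands of \eqref{eq-1.04}---the pre-Schwarzian piece $|\sigma P_f + \sigma^{2} - \sigma_w|$ and the dilatation piece $|\sigma\omega'|/(1-|\omega|^{2})$ (which is absent from the analytic version)---so as to close the inequality. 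A secondary subtlety is that \eqref{eq-1.04} is non-strict, so $J_F$ may degenerate on part of $\{|z|>1\}$; this is handled by a standard approximation (replace $\sigma$ by $s\sigma$ with $s > 1$ close to $1$ to obtain strict inequality, then let $s \to 1^{+}$).
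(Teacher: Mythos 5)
Your plan diverges from the paper's proof, and it contains a genuine gap at exactly the step you call the heart of the argument. The paper does not build an extension of $f$ at all: it first observes that \eqref{eq-1.04} plus the identity $\tfrac{h''}{h'}=P_f+\tfrac{\overline{\omega}\omega'}{1-|\omega|^2}$ and $|\overline{\omega}|\le 1$ give $\bigl|\sigma \tfrac{h''}{h'}+\sigma^2-\sigma_z\bigr|\le|\sigma_{\overline{z}}|$, so $h$ is univalent by Theorem~\ref{thm-Ahlfors-1974-0}; it then applies the same reasoning to $f_a=f+a\overline{f}$ (using that $P_{f_a}=P_f$ since $J_{f_a}=(1-|a|^2)J_f$, and that $\tfrac{|\omega_a'|}{1-|\omega_a|^2}=\tfrac{|\omega'|}{1-|\omega|^2}$ because $\omega_a=\varphi_a\circ\omega$ with $\varphi_a$ a disk automorphism) to conclude that every $h_a=h+ag$, $|a|<1$, is univalent, and finishes with Hurwitz plus the Hern\'andez--Mart\'in stability theorem to pass from univalence of $h+\varepsilon g$ to univalence of $h+\varepsilon\overline{g}$.

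The gap in your proposal is the claim that \eqref{eq-1.04} forces $|F_{\overline{z}}|\le|F_z|$ on $\{|z|>1\}$. If you carry out the Wirtinger computation you describe (it is done explicitly in the paper's proof of Theorem~\ref{thm-1.01}, equations \eqref{eq-2.05}--\eqref{eq-2.07}), the resulting bound on the Beltrami quotient of the extension is not controlled by the hypothesis alone: with $\lambda=1$ and the non-strict bound $k=1$ the numerator of the estimate exceeds the denominator by $2\|\omega\|_{\infty}$, so orientation preservation outside the disk cannot be extracted unless $\|\omega\|_{\infty}$ is small relative to $k$ --- which is precisely why Theorem~\ref{thm-1.01} must assume $\|\omega\|_{\infty}<1$ and the extra condition \eqref{eq-Hernandez-Martin-2013-3} to obtain a quasiconformal extension. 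Theorem~\ref{thm-1.00} assumes neither, so your extension $F$ need not be a local homeomorphism on $\{|z|>1\}$ and the degree argument does not get off the ground. Your secondary fix is also problematic: replacing $\sigma$ by $s\sigma$ with $s>1$ rescales $\sigma_z$, $\sigma_{\overline{z}}$ and the dilatation term by $s$ but rescales $\sigma^2$ by $s^2$, so it does not convert \eqref{eq-1.04} into a strict inequality of the same form. To repair the proof you would need to abandon the direct extension and argue, as the paper does, through the analytic shears $h+ag$ and a stability result.
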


\begin{remark}
{\rm By setting $\sigma(z)=\frac{(c+1)\overline{z}}{1-|z|^2}$ in \eqref{eq-1.04} of Theorem \ref{thm-1.00}, we get  Theorem \ref{thm-Bravo-Hernandez-Venegas-2017} due to Bravo $et \ al.$ \cite{Bravo-Hernandez-Venegas-2017};
By putting $\sigma(z)=\frac{\overline{z}}{1-|z|^2}$ in \eqref{eq-1.04} of Theorem \ref{thm-1.00}, we obtain Theorem \ref{thm-Hernandez-Martin-2015-1} due to Hern\'{a}ndez and   Mart\'{\i}n \cite{Hernandez-Martin-2015}.
}
\end{remark}

\begin{theorem}\label{thm-1.01}
Let $f=h+\overline{g}$ be a sense-preserving harmonic mapping in $\triangle$ and let $\omega$ be its second complex dilatation with $\|\omega\|_{\infty}<1$.
Let $\sigma$ be a continuous function in  $\triangle$, which satisfies   conditions given  in  Theorem \ref{thm-1.00}. Assume that
\begin{equation}\label{eq-1.05}
\left|\sigma P_f+\sigma^{2}-\sigma_{z}\right| +  \frac{|\sigma \omega^{\prime}|}{1-|\omega|^2}  \leq k\left|\sigma_{\overline{z}}\right| \ \ (0 \leq k<1;\, z \in \triangle).
\end{equation}
Then  the harmonic mapping $f_{\lambda}=h+\lambda \overline{g}\; (|\lambda|\leq 1)$ has a continuous and injective extension $\widetilde{f}$ to $\overline{\triangle}$. Moreover, the function
\begin{equation}\label{eq-1.06}
F_{\lambda}(z)= \begin{cases}
    \widetilde{f}_{\lambda}(z) &(|z| \leq 1), \\
    f_{\lambda}\left(\frac{1}{\overline{z}}\right)+U_{\lambda}\left(\frac{1}{\overline{z}}\right) &(|z|>1),
      \end{cases}
\end{equation}
is a homeomorphic extension of $f$ to $\mathbb{C}$ onto itself. The function $U_{\lambda}$ that appears in \eqref{eq-1.06} is defined by
$$
U_{\lambda}(z)=\frac{h^{\prime}(z)}{\sigma(z)}+\lambda\frac{\overline{g^{\prime}(z)}}{\overline{\sigma(z)}}  \quad (z \in \triangle \backslash\{0\}).
$$
Furthermore, if $k$ satisfies the condition  \eqref{eq-Hernandez-Martin-2013-3}, then  the family of mappings $F_{\lambda}(z)$ are  $K$-quasiconformal mappings of $\mathbb{C}$, where $K$ is given by \eqref{eq-Chen-Que-2017-2}.
\end{theorem}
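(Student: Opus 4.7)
The plan is to follow the Ahlfors-type explicit construction used in Theorems \ref{thm-Chen-Que-2017-1} and \ref{thm-Hu-Fan-2021}, generalizing the specific $\sigma$ of those results to an arbitrary $\sigma$ satisfying conditions (i)--(iii) of Theorem \ref{thm-1.00}. The proof will split into four stages: (a) univalence of each $f_\lambda$ in $\triangle$; (b) continuity of $F_\lambda$ across $\partial\triangle$; (c) a pointwise bound on the complex dilatation of $F_\lambda$ on $|z|>1$; and (d) combining these under \eqref{eq-Hernandez-Martin-2013-3} to recover the explicit $K$ of \eqref{eq-Chen-Que-2017-2}.

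For (a), I would apply Theorem \ref{thm-1.00} directly to $f_\lambda$. From \eqref{eq-Pf} one computes $P_{f_\lambda}=P_h-|\lambda|^2\overline{\omega}\omega'/(1-|\lambda|^2|\omega|^2)$ and $\omega_\lambda=\overline{\lambda}\omega$; a short manipulation using $P_h=P_f+\overline{\omega}\omega'/(1-|\omega|^2)$ and the elementary estimate $(1-|\lambda|)(1-|\omega|)\ge 0$ shows that \eqref{eq-1.05} for $f$ transfers to the analogous inequality for $f_\lambda$, so Theorem \ref{thm-1.00} delivers univalence of $f_\lambda$. For (b), condition (ii) on $\sigma$ forces $U_\lambda(w)\to 0$ as $|w|\to 1^-$, matching the boundary limit of $F_\lambda|_{\triangle}=f_\lambda$ with that of $F_\lambda(z)$ as $|z|\to 1^+$; this defines $\widetilde{f}_\lambda$ consistently and makes $F_\lambda$ continuous on $\mathbb{C}$.

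For (c), the computational core, set $w=1/\overline{z}$ for $|z|>1$ and write $F_\lambda(z)=H(w)+\lambda\overline{G(w)}$, with $H=h+h'/\sigma$ and $G=g+g'/\sigma$. The chain rule, together with the identities $H_w=(h'/\sigma^2)(\sigma^2+\sigma P_h-\sigma_w)$, $H_{\overline{w}}=-h'\sigma_{\overline{w}}/\sigma^2$, and the analogous formulas for $G$ (using $g''/g'=P_h+\omega'/\omega$ from $g'=\omega h'$), yields explicit expressions for $(F_\lambda)_z$ and $(F_\lambda)_{\overline{z}}$. Substituting $P_h=P_f+\overline{\omega}\omega'/(1-|\omega|^2)$ rearranges everything around the Ahlfors combination $A:=\sigma^2+\sigma P_f-\sigma_w$. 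The hypothesis \eqref{eq-1.05}, in the form $|A|+|\sigma\omega'|/(1-|\omega|^2)\le k|\sigma_{\overline{w}}|$, combined with $|\omega|\le\|\omega\|_\infty$, then produces a pointwise bound on $|\mu_{F_\lambda}|$ expressible in terms of the normalized quantities $\alpha=|A|/|\sigma_{\overline{w}}|$ and $\beta=|\sigma\omega'|/\bigl[(1-|\omega|^2)|\sigma_{\overline{w}}|\bigr]$, which satisfy $\alpha+\beta\le k$.

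Stage (d) is an optimization over such pairs $(\alpha,\beta)$. Under \eqref{eq-Hernandez-Martin-2013-3} the resulting bound on $|\mu_{F_\lambda}|$ is monotone increasing in $\alpha$ along $\alpha+\beta=k$, with maximum $(k+|\lambda|\|\omega\|_\infty)/(1-|\lambda|\|\omega\|_\infty k)$ attained at $(\alpha,\beta)=(k,0)$; this equals $(K-1)/(K+1)$ with $K$ as in \eqref{eq-Chen-Que-2017-2}. Since the dilatation of $F_\lambda=f_\lambda$ on $\triangle$ is $|\lambda||\omega|\le|\lambda|\|\omega\|_\infty$, which does not exceed this bound, and the unit circle is removable for quasiconformal mappings with continuous boundary values, $F_\lambda$ is a global $K$-quasiconformal homeomorphism of $\mathbb{C}$. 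The pure homeomorphism claim without \eqref{eq-Hernandez-Martin-2013-3} follows from (a), the same exterior computation, and a topological matching argument once $\widetilde{f}_\lambda(\partial\triangle)$ has been identified as a Jordan curve. The main obstacle is the bookkeeping in (c) --- reorganizing the derivatives of $H,G,\overline{G}$ so that $A$ and $|\sigma\omega'|/(1-|\omega|^2)$ appear additively as in the hypothesis --- together with verifying in (d) that the maximum occurs at the claimed corner precisely when \eqref{eq-Hernandez-Martin-2013-3} is in force.
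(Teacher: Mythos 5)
Your proposal is correct and follows essentially the same route as the paper's proof: transferring \eqref{eq-1.05} from $f$ to $f_\lambda$ via $(1-|\lambda|)(1-|\omega|)\ge 0$, deferring the continuous/homeomorphic extension to the Hern\'andez--Mart\'in construction, computing $\mu_{F_\lambda}$ on $|z|>1$ through the reciprocal transformation and reorganizing around $\sigma P_f+\sigma^2-\sigma_z$ and $|\sigma\omega'|/(1-|\omega|^2)$, and then optimizing the resulting bound (the paper's function $\rho$ on $[0,k]$) to reach \eqref{eq-Chen-Que-2017-2}, with the circle removed by Lehto's lemma. The only soft spot is stage (b), where the boundary matching needs more than condition (ii) alone (one must control the growth of $h'$ via the hypothesis), but the paper likewise delegates this to the cited reference.
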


\begin{remark}
{\rm By setting $\lambda=0$ in Theorem \ref{thm-1.01}, we get Theorem \ref{thm-Ahlfors-1974-1} exactly given  by Ahlfors \cite{Ahlfors-1974};
  By putting  $\sigma(z)=\frac{\overline{z}}{1-|z|^2}$ in Theorem \ref{thm-1.01}, we obtain the
 result in Theorem \ref{thm-Chen-Que-2017-1} due to Chen and Que \cite{Chen-Que-2017}  and Theorem \ref{thm-Hu-Fan-2021} due to Hu and Fan\cite{Hu-Fan-2021}.
 By setting  $\sigma(z)=\frac{\overline{z}}{1-|z|^2}$ and $\lambda=1$ in Theorem \ref{thm-1.01}, we get  the result just as  Theorem \ref{thm-Hernandez-Martin-2013-1} due to Hern\'{a}ndez and Mart\'{\i}n \cite{Hernandez-Martin-2013}.}
\end{remark}

\begin{theorem}\label{thm-1.02}
 Let $f=h+\overline{g}$ be a sense-preserving harmonic mapping in $\triangle$  with $\|\omega\|_{\infty}<1$. If $f$ satisfies the condition \eqref{eq-1.05},
then   $\widetilde{f}(\partial{\triangle})$ is a quasicircle and  $f$ can be extended to a quasiconformal mapping in $\mathbb{C}$.
\end{theorem}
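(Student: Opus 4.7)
The plan is to deduce Ahlfors's classical criterion for the analytic part $h$, to use it to extend $h$ quasiconformally to $\mathbb{C}$, and then to build a quasiconformal extension of $f$ from the resulting quasidisk structure.

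First, \eqref{eq-1.05} together with $|\omega|\le\|\omega\|_{\infty}<1$ already forces the hypothesis of Theorem~\ref{thm-Ahlfors-1974-1} on $h$ alone. From the identity $P_h=P_f+\overline{\omega}\omega'/(1-|\omega|^{2})$ and the triangle inequality one obtains
\[
\left|\sigma P_h+\sigma^{2}-\sigma_z\right|\le\left|\sigma P_f+\sigma^{2}-\sigma_z\right|+\frac{|\sigma\omega'|}{1-|\omega|^{2}}\le k|\sigma_{\bar z}|.
\]
Theorem~\ref{thm-Ahlfors-1974-1} then supplies a $\tfrac{1+k}{1-k}$-quasiconformal self-map $H$ of $\mathbb{C}$ with $H|_{\triangle}=h$; in particular $h$ is univalent on $\triangle$ and $h(\partial\triangle)=H(\partial\triangle)$ is a quasicircle bounding the quasidisk $h(\triangle)$.

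Next I would factor $f$ through $h$. The map $\Phi:=f\circ h^{-1}\colon h(\triangle)\to f(\triangle)$ takes the harmonic form $\Phi(w)=w+\overline{G(w)}$ with $G:=g\circ h^{-1}$ analytic on $h(\triangle)$ and $|G'(w)|=|\omega(h^{-1}(w))|\le\|\omega\|_{\infty}<1$, so $\Phi$ is a $\tfrac{1+\|\omega\|_{\infty}}{1-\|\omega\|_{\infty}}$-quasiconformal homeomorphism of the quasidisk $h(\triangle)$ onto the Jordan domain $f(\triangle)$. The goal is now to extend $\Phi$ to a quasiconformal self-map $\widetilde\Phi$ of $\mathbb{C}$; once this is achieved, $F:=\widetilde\Phi\circ H$ is a quasiconformal self-map of $\mathbb{C}$ restricting to $f$ on $\triangle$, and $\widetilde f(\partial\triangle)=F(\partial\triangle)$, being the image of $\partial\triangle$ under a quasiconformal self-map of $\mathbb{C}$, is a quasicircle. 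To produce $\widetilde\Phi$, I would use the explicit quasiconformal reflection $R:=H\circ J\circ H^{-1}$ (with $J(z)=1/\bar z$) about the quasicircle $\partial h(\triangle)$ to transfer the analytic second factor $G$ across the boundary and then symmetrise the Beltrami coefficient of $\Phi$; the measurable Riemann mapping theorem with a normalisation compatible with the $R$-symmetry then yields a quasiconformal $\widetilde\Phi$ on $\mathbb{C}$ which agrees with $\Phi$ on $h(\triangle)$.

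The main obstacle is this last extension step. One cannot appeal to a generic ``quasiconformal image of a quasidisk is a quasidisk'' principle, since that already fails for conformal maps of $\triangle$ onto ill-behaved simply connected domains. The argument must crucially exploit the rigid harmonic form $\Phi=\mathrm{id}+\overline G$ with $G$ analytic and $\|G'\|_\infty<1$, together with the explicit quasiconformal reflection supplied by the Ahlfors extension $H$, in order both to control the Beltrami coefficient of $\widetilde\Phi$ on the exterior of $h(\triangle)$ and to pin down the normalisation so that $\widetilde\Phi$ genuinely restricts to $\Phi$ rather than merely to a conformal modification of it.
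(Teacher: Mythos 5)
Your reduction of \eqref{eq-1.05} to Ahlfors's condition for $h$ is correct and is also the paper's starting point: $h$ extends to a $\frac{1+k}{1-k}$-quasiconformal self-map $H$ of $\mathbb{C}$, and $h(\partial\triangle)$ is a quasicircle. The factorisation $f=\Phi\circ h$ with $\Phi=\mathrm{id}+\overline{G}$, $G=g\circ h^{-1}$, $\|G'\|_{\infty}\le\|\omega\|_{\infty}<1$ is also fine as far as it goes. But your argument stops exactly where the theorem begins: you never actually produce the extension $\widetilde\Phi$, and the mechanism you sketch cannot produce it. Solving the Beltrami equation for a coefficient obtained by reflecting $\mu_{\Phi}$ across $\partial h(\triangle)$ gives a quasiconformal map with the prescribed dilatation only up to post-composition with a conformal map of $\Phi(h(\triangle))=f(\triangle)$; to absorb that conformal factor and force the solution to restrict to $\Phi$ you would already need to know that $f(\triangle)$ is a quasidisk (equivalently, that the boundary correspondence is quasisymmetric), which is the statement being proved. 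Nor does the interior bound $|G'|<1$ rescue you: to convert a derivative bound into a difference-quotient bound up to the boundary one must integrate along paths inside $h(\triangle)$, and the quasiconvexity constant of a quasidisk is in general strictly larger than $1$, so you do not obtain the lower Lipschitz estimate $|\Phi(w_1)-\Phi(w_2)|\ge c\,|w_1-w_2|$ with $c>0$ that the construction requires.

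The paper closes precisely this gap with two quantitative lemmas that have no counterpart in your proposal. Lemma \ref{lem2.01} shows that \eqref{eq-1.05} forces $h_a=h+ag$ to be univalent not merely for $|a|\le 1$ but for all $|a|<\delta$ with $\delta=\frac{1+k\|\omega\|_{\infty}}{k+\|\omega\|_{\infty}}>1$, and Lemma \ref{lem2.02} converts this extra margin of univalence into the global difference-quotient bound $\sup_{\alpha\ne\beta}\left|\frac{g(\alpha)-g(\beta)}{h(\alpha)-h(\beta)}\right|\le \frac{1}{\delta}<1$ on $\overline{\triangle}$. This is exactly the bi-Lipschitz control on $\zeta\mapsto\zeta+\overline{G(\zeta)}$ that your factorisation lacks; it is what makes $\widetilde f(\partial\triangle)$ a quasicircle and yields the quasiconformal reflection. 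The paper then applies this to the dilations $(f_{\lambda})_r(z)=f_{\lambda}(rz)$, which satisfy \eqref{eq-1.05} with the same constant $k$, and lets $r\to1$ via \cite[Theorem 5.3]{Lehto-Virtanen-1973} to obtain a $K$-quasiconformal extension with $K$ independent of $r$. To complete your approach you would need to import these lemmas (or the corresponding estimates from \cite{Hernandez-Martin-2013} and \cite{Hu-Fan-2021}), or prove an equivalent quantitative boundary estimate, before the extension of $\Phi$ across $\partial h(\triangle)$ can be justified.
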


By means of the pre-Schwazian derivative  and Schwazian derivative    of harmonic
mappings, by setting $$\sigma=v-\frac{1}{2}P_f,$$ and $$\sigma=\frac{c\overline{z}}{1-|z|^2}-\frac{1}{2}P_f$$ in Theorem \ref{thm-1.01}, we get following two corollaries, respectively.

\begin{corollary}\label{coro-1}
For a sense-preserving harmonic mapping   $f$ in $\triangle$, the inequality
$$
\left|\frac{1}{2} S_{f}+v^{2}-v_{z}\right| + \left|(v-P_f)\frac{\omega^{\prime}}{1-|\omega|^2} \right|  \leq k\left|v_{\overline{z}}-\overline{P_f}\right| \ \ (0\leq k<1)
$$
together with $v \rightarrow \infty$ for $|z| \rightarrow 1$ and $\frac{v_{\overline{z}}}{v^{2}} \neq 0$ is sufficient to imply the existence of a $\frac{1+k}{1-k}$-quasiconformal extension of $f$.
\end{corollary}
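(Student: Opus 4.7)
The plan is to deduce Corollary \ref{coro-1} from Theorem \ref{thm-1.01} by the specialization $\sigma = v - \tfrac{1}{2}P_f$, which mirrors the way Ahlfors passes from Theorem \ref{thm-Ahlfors-1974-1} to Theorem \ref{thm-Ahlfors-1974-3} in the analytic setting. Once this substitution is inserted into \eqref{eq-1.05} and the resulting quantities are re-expressed in terms of $v$, $S_f$, and the harmonic correction $\omega'/(1-|\omega|^2)$, the conclusion of Theorem \ref{thm-1.01} will deliver the desired quasiconformal extension.

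The central computation is the identity
\[
(P_f)_z - \tfrac{1}{2}P_f^{\,2} \;=\; S_f,
\]
a harmonic analogue of the classical relation linking pre-Schwarzian and Schwarzian. I would prove it by writing $P_f = P_h - \overline{\omega}\omega'/(1-|\omega|^2)$ with $P_h$ holomorphic, using $(P_h)_z = S_h + \tfrac{1}{2}P_h^{\,2}$, carefully differentiating the quotient $\overline{\omega}\omega'/(1-|\omega|^2)$ in $z$ (noting $\partial_z(1-|\omega|^2) = -\overline{\omega}\omega'$), and expanding $P_f^{\,2}$; the cross terms recombine to produce precisely the harmonic correction terms in the definition \eqref{eq-Pf1} of $S_f$. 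Combining this identity with a direct expansion of $(v-\tfrac12 P_f)P_f + (v-\tfrac12 P_f)^2$ yields the clean formula
\[
\sigma P_f + \sigma^2 - \sigma_z \;=\; v^2 - v_z + \tfrac{1}{2}S_f,
\]
which is the leading left-hand term of the corollary's inequality.

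To complete the translation of \eqref{eq-1.05} into the corollary's form, the second left-hand term $|\sigma\omega'|/(1-|\omega|^2)$ is immediate from the substitution, and for $\sigma_{\overline z}$ one computes $(P_f)_{\overline z} = -|\omega'|^2/(1-|\omega|^2)^2$ by way of $\partial_{\overline z}\overline{\omega} = \overline{\omega'}$ together with $(P_h)_{\overline z}=0$. The admissibility conditions (i)--(iii) imposed on $\sigma$ by Theorem \ref{thm-1.00} then reduce to the corresponding nondegeneracy hypotheses on $v$ together with the local boundedness of $P_f$ near $\partial\triangle$ (where the implicit assumption $\|\omega\|_\infty<1$ from Theorem \ref{thm-1.01} is invoked).

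I expect the principal obstacle to be the term-by-term bookkeeping in verifying $(P_f)_z - \tfrac{1}{2}P_f^{\,2} = S_f$: because $P_f$ is non-holomorphic, its $z$-derivative spawns mixed $\omega,\overline{\omega}$ monomials that must cancel against the square $P_f^{\,2}$ in a precise way to reproduce the harmonic Schwarzian \eqref{eq-Pf1}. Once this delicate identity is in hand, the remainder of the argument is pure substitution into Theorem \ref{thm-1.01}, after which the stated $\frac{1+k}{1-k}$-quasiconformal extension follows from the conclusion of that theorem.
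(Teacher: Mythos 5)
Your proposal is correct and coincides with the paper's own derivation, which consists solely of the remark that Corollary \ref{coro-1} follows by putting $\sigma = v - \tfrac{1}{2}P_f$ into Theorem \ref{thm-1.01}; your verification of the identity $(P_f)_z - \tfrac{1}{2}P_f^{\,2} = S_f$ (and hence $\sigma P_f+\sigma^2-\sigma_z = \tfrac{1}{2}S_f+v^2-v_z$) supplies exactly the detail the paper omits. One caveat: the substitution literally produces $\bigl|(v-\tfrac{1}{2}P_f)\,\omega'/(1-|\omega|^2)\bigr|$ as the second left-hand term and $k\bigl|v_{\overline{z}}-\tfrac{1}{2}(P_f)_{\overline{z}}\bigr| = k\bigl|v_{\overline{z}}+\tfrac{1}{2}|\omega'|^2/(1-|\omega|^2)^2\bigr|$ on the right, not the printed $(v-P_f)$ and $k\bigl|v_{\overline{z}}-\overline{P_f}\bigr|$; this mismatch is a defect of the corollary's statement rather than of your argument, but you should record it explicitly instead of asserting that the substitution reproduces the corollary's form verbatim.
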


\begin{corollary}\label{coro-2}
For a sense-preserving harmonic mapping   $f$ in $\triangle$, the inequality
\begin{equation}
\begin{aligned}
\left|\frac{1}{2} S_{f}(1-|z|^2)^2- c(1-c) \overline{z} \right|& + \left|\left(c\overline{z}(1-|z|^2)-\frac{1}{2}P_f(1-|z|^2)^2\right)\frac{\omega^{\prime}}{1-|\omega|^2} \right|\\
  &\qquad  \leq k\left|c-\overline{P_f}(1-|z|^2)^2\right| \ \ (0\leq k<1;\,|c-1|\leq k)
\end{aligned}
\end{equation}
is sufficient to imply the existence of a $\frac{1+k}{1-k}$-quasiconformal extension of $f$.
\end{corollary}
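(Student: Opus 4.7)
The plan is to specialize Theorem~\ref{thm-1.01} to the explicit auxiliary function
\[
\sigma(z)=\frac{c\overline{z}}{1-|z|^2}-\tfrac12 P_f(z),
\]
as indicated in the paragraph preceding the corollary, and then multiply the inequality \eqref{eq-1.05} through by $(1-|z|^2)^2$ so that each quantity appears in the normalized form used in the statement.

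First I would verify that this $\sigma$ is admissible for Theorem~\ref{thm-1.01}. Continuity of $\sigma$ on $\triangle$ and a.e.\ existence of $\sigma_z,\sigma_{\overline{z}}$ follow from the smoothness of $P_f$ together with that of $\overline{z}/(1-|z|^2)$. The hypothesis $|c-1|\leq k<1$ forces $c\neq 0$, so $|\sigma|\to\infty$ as $|z|\to 1^-$ and hence $1/\sigma=0$ on $\partial\triangle$. The non-vanishing of $\sigma_{\overline{z}}/\sigma^2$ on $\overline{\triangle}$ will be read off from the explicit formula for $\sigma_{\overline{z}}$ computed below.

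The core of the argument is the algebraic identification of the three quantities in \eqref{eq-1.05}. The key tool is the harmonic analogue of the classical pre-Schwarzian--Schwarzian relation,
\[
\partial_z P_f \;=\; S_f+\tfrac12 P_f^{\,2},
\]
which is derived directly from \eqref{eq-Pf} and \eqref{eq-Pf1} using $\partial_z\overline{\omega}=0$ and $\partial_z(1-|\omega|^2)=-\overline{\omega}\omega'$. Expanding $\sigma P_f+\sigma^2-\sigma_z$ then produces, after the $\tfrac14 P_f^{\,2}$ terms cancel,
\[
\sigma P_f+\sigma^2-\sigma_z \;=\; \tfrac12 S_f-\frac{c(1-c)\,\overline{z}^{\,2}}{(1-|z|^2)^2},
\]
so that multiplication by $(1-|z|^2)^2$ reproduces the first term on the left-hand side of the stated inequality; the second term on the left is obtained by substituting $\sigma$ into $|\sigma\omega'|/(1-|\omega|^2)$ and multiplying by $(1-|z|^2)^2$. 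For the right-hand side one evaluates $\sigma_{\overline{z}}$ using $\partial_{\overline{z}}\overline{\omega}=\overline{\omega'}$ and $\partial_{\overline{z}}(1-|\omega|^2)=-\omega\overline{\omega'}$, and multiplication by $(1-|z|^2)^2$ identifies the result with the expression $k\bigl|c-\overline{P_f}(1-|z|^2)^2\bigr|$ appearing in the statement.

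The main obstacle is the careful bookkeeping of the non-analytic contributions to $P_f$. Both the $z$-derivative needed for the pre-Schwarzian--Schwarzian identity and the $\overline{z}$-derivative needed for $\sigma_{\overline{z}}$ pass through the composite term $\overline{\omega}\omega'/(1-|\omega|^2)$, where the interplay between the analytic factors $\omega,\omega'$ and the anti-analytic factors $\overline{\omega},\overline{\omega'}$ must be tracked without sign errors. Once these two differentiations are in place, the remaining matching of terms is routine algebra, and the existence of a $\frac{1+k}{1-k}$-quasiconformal extension of $f$ follows at once from Theorem~\ref{thm-1.01}.
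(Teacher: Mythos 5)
Your overall strategy is exactly the paper's: the authors prove Corollary \ref{coro-2} only by the one-line remark that one should substitute $\sigma=\frac{c\overline{z}}{1-|z|^2}-\frac12 P_f$ into Theorem \ref{thm-1.01}, and your key identity $\partial_z P_f=S_f+\frac12 P_f^{\,2}$ is correct and does yield, after the cancellations you describe,
\begin{equation*}
\sigma P_f+\sigma^{2}-\sigma_{z}=\tfrac12 S_f-\frac{c(1-c)\,\overline{z}^{\,2}}{(1-|z|^2)^{2}},\qquad
\sigma_{\overline{z}}=\frac{c}{(1-|z|^2)^{2}}+\frac{|\omega'|^{2}}{2\,(1-|\omega|^{2})^{2}}.
\end{equation*}
So the plan is sound and matches the paper's intent.

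However, your final step --- the claim that multiplying by $(1-|z|^2)^2$ ``reproduces'' and ``identifies'' the three terms of the stated inequality --- does not actually go through against the corollary as printed, and you should not assert it without checking. First, your own displayed formula produces $c(1-c)\overline{z}^{\,2}$, while the statement has $c(1-c)\overline{z}$; these differ, and you cannot simultaneously claim your formula is right and that it matches the statement (the same discrepancy already occurs in Theorem \ref{thm-Ahlfors-1974-4}, so this is presumably a typo in the paper, but it must be flagged, not papered over). Second, and more seriously, $(1-|z|^2)^2\sigma_{\overline{z}}=c+\frac{(1-|z|^2)^2|\omega'|^2}{2(1-|\omega|^2)^2}$, which is not the quantity $c-\overline{P_f}\,(1-|z|^2)^2$ appearing on the right-hand side of the corollary under the standard reading of $\overline{P_f}$ as the complex conjugate of $P_f$ (note $\partial_{\overline z}P_f=-|\omega'|^2/(1-|\omega|^2)^2$, which is real, negative, and has nothing to do with $\overline{P_h}$). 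Your proof therefore establishes a corrected version of the corollary rather than the printed one; you need either to carry out and display the computation of $\sigma_{\overline z}$ explicitly and state the inequality you actually obtain, or to explain the notational convention that would reconcile the two. Finally, your verification of condition (ii) is too quick: $c\neq 0$ alone does not give $|\sigma|\to\infty$ as $|z|\to 1^-$, since $\sigma$ is the difference of $\frac{c\overline z}{1-|z|^2}$ and $\frac12 P_f$, and the hypothesis of the corollary does not by itself prevent $\frac12 P_f$ from growing like $\frac{c\overline z}{1-|z|^2}$ near the boundary; some additional argument (e.g.\ a growth bound on $(1-|z|^2)P_f$ extracted from the hypothesis) is needed there.
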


\begin{remark}
{\rm   Corollary \ref{coro-1} and Corollary \ref{coro-2} extended Theorem \ref{thm-Ahlfors-1974-3} and Theorem \ref{thm-Ahlfors-1974-4} to harmonic cases, respectively.}
\end{remark}

\begin{theorem}\label{thm-1.03}
Let $f$ be a sense-preserving harmonic mapping in $\triangle$ with a representation $f=h+\alpha \overline{h}$, where $h$ is a locally univalent analytic function in $\triangle$. Let $\sigma$ satisfy the conditions in Theorem \ref{thm-1.00}  and $\alpha$ be a constant with $|\alpha|<1$. Assume that $h$ satisfies the condition
\begin{equation}\label{eq-1.10}
\left|\sigma P_h+\sigma^{2}-\sigma_{z}\right| \leq k\left|\sigma_{\overline{z}}\right|  \ \ (0\leq k<1).
\end{equation}
Then $f$ is a harmonic Teichm\"{u}ller mappings of $\triangle$ and has a continuous and homeomorphic extension $\tilde{f}$ to $\overline{\triangle}$. Moreover, the mapping
\begin{equation}\label{eq-1.11}
F_{\alpha}(z)=
\begin{cases}\widetilde{f}(z) &  (|z| \leq 1),
\\ f\left(\frac{1}{\overline{z}}\right)+U_{\alpha}\left(\frac{1}{\overline{z}}\right)  & (|z|>1),
\end{cases}
\end{equation}
is a $K$-quasiconformal mapping of the complex plane $\mathbb{C}$ with $K$ given by \eqref{eq-Chen-Que-2017-5},
where
$$
U_{\alpha}(z)=\frac{h^{\prime}(z)}{\sigma(z)}+\alpha\frac{\overline{h^{\prime}(z)}}{\overline{\sigma(z)}}  \quad (z \in \triangle \backslash\{0\}).
$$
The maximal dilatation estimate of the quasiconformal extension $F_{\alpha}$ is asymptotically sharp in $k$, and extremal mappings are of the form $f(z)=m z+n \overline{z}$, where $m$ and $n$ are two nonvanishing constants.
\end{theorem}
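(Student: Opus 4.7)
The plan is to recognize that the claimed extension factors as $F_\alpha=T\circ H$, where $H$ is Ahlfors's quasiconformal extension of $h$ from Theorem \ref{thm-Ahlfors-1974-1} and $T(w)=w+\alpha\overline w$ is the global $\mathbb{R}$-affine homeomorphism of $\mathbb{C}$ with constant Beltrami coefficient $\alpha$. Since $h$ satisfies \eqref{eq-1.10}, Theorem \ref{thm-Ahlfors-1974-1} produces a $\frac{1+k}{1-k}$-quasiconformal $H:\mathbb{C}\to\mathbb{C}$ that extends $h$ continuously to $\overline\triangle$ and equals $h(1/\overline z)+u(1/\overline z)$ outside, with $u=h'/\sigma$; in particular $h$ is univalent by Theorem \ref{thm-Ahlfors-1974-0}, so $f=h+\alpha\overline h$ is a harmonic Teichm\"uller mapping and $\widetilde f=\widetilde h+\alpha\overline{\widetilde h}$ is the desired continuous extension of $f$. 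Using the identity $U_\alpha(z)=u(z)+\alpha\overline{u(z)}$, a short computation yields the key factorization
$$
F_\alpha(z) \,=\, H(z)+\alpha\overline{H(z)} \,=\, T(H(z))\qquad (z\in\mathbb{C}),
$$
so $F_\alpha$ is automatically a homeomorphism of $\mathbb{C}$ onto itself.

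For the dilatation estimate I apply the composition formula for Beltrami coefficients to $F_\alpha=T\circ H$, noting $\mu_T\equiv\alpha$, to obtain
$$
\mu_{F_\alpha} \,=\, \frac{\mu_H+\alpha\,\overline{H_z}/H_z}{1+\overline{\mu_H}\,\alpha\,\overline{H_z}/H_z}.
$$
Since $|\overline{H_z}/H_z|=1$, the right-hand side has the form of a pseudo-hyperbolic distance in $\triangle$, and the pseudo-hyperbolic triangle inequality through the origin combined with $|\mu_H|\leq k$ gives
$$
|\mu_{F_\alpha}| \,\leq\, \frac{|\mu_H|+|\alpha|}{1+|\mu_H||\alpha|} \,\leq\, \frac{k+|\alpha|}{1+k|\alpha|},
$$
where the second step uses the monotonicity of $t\mapsto(t+|\alpha|)/(1+t|\alpha|)$ on $[0,k]$ and is trivial on $\triangle$ since $\mu_H=0$ there. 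Converting this $L^\infty$ bound to a maximal dilatation yields precisely $K=\frac{(1+k)(1+|\alpha|)}{(1-k)(1-|\alpha|)}$ from \eqref{eq-Chen-Que-2017-5}. Asymptotic sharpness is then checked on the family $f(z)=mz+n\overline z$: here $h(z)=mz$ has $P_h\equiv 0$, so \eqref{eq-1.10} holds in the limit $k\to 0^+$, while $f$ itself is $\mathbb{R}$-affine and globally $\frac{1+|\alpha|}{1-|\alpha|}$-quasiconformal, matching $\lim_{k\to 0^+}K$.

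The substantive step, and the only real obstacle, is recognizing the factorization $F_\alpha=T\circ H$; once this is in place, the continuity of the extension across $\partial\triangle$ is immediate from continuity of $H$ and $T$, and the quasiconformality reduces to a one-line application of the composition rule together with the classical pseudo-hyperbolic Schwarz--Pick inequality. The remaining verifications---the identity $U_\alpha=u+\alpha\overline u$, the representation $\widetilde f=\widetilde h+\alpha\overline{\widetilde h}$, and the bookkeeping that turns $\|\mu_{F_\alpha}\|_\infty\leq (k+|\alpha|)/(1+k|\alpha|)$ into the stated $K$---are purely algebraic.
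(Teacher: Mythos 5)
Your proof is correct, and it reaches the paper's bound by a structurally cleaner route. The paper argues by first noting that $\omega\equiv\alpha$ forces $P_f=P_h$, invoking the Hern\'andez--Mart\'in extension machinery for the homeomorphism claim, and then computing $\mu_{F_\alpha}$ on $|z|>1$ by brute force: after the substitution $\zeta=1/\overline z$ it grinds the partial derivatives of $F_\alpha$ down to $\mu_{F_\alpha}(\zeta)=\bigl(\alpha+\eta(z)\bigr)/\bigl(1+\overline{\alpha}\,\eta(z)\bigr)$, where $|\eta(z)|=\left|\sigma P_h+\sigma^{2}-\sigma_{z}\right|/\left|\sigma_{\overline z}\right|\leq k$, and concludes with the M\"obius bound of Lemma \ref{lem2.03}. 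Your factorization $F_\alpha=T\circ H$ with $T(w)=w+\alpha\overline w$ packages all of this at once: the homeomorphism property and continuity across $\partial\triangle$ come for free from Ahlfors's Theorem \ref{thm-Ahlfors-1974-1} applied to $h$, and the composition formula produces exactly the paper's expression $(\mu_H+\alpha\tau)/(1+\overline{\mu_H}\alpha\tau)$ with $|\tau|=1$, so the two dilatation estimates are literally the same inequality (your ``pseudo-hyperbolic triangle inequality through the origin'' is the paper's Lemma \ref{lem2.03}); the crude submultiplicativity $K_{T\circ H}\leq K_T K_H$ would even give the stated $K$ without the pointwise computation. Your sharpness check on $f=mz+n\overline z$ matches the paper's, and it inherits the paper's own small imprecision rather than introducing a new one: the condition \eqref{eq-1.10} with $k=0$ requires $\sigma_z=\sigma^2$, which holds for the Becker weight $\sigma=\overline z/(1-|z|^2)$ but not for an arbitrary admissible $\sigma$.
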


\begin{remark}
{\rm By setting  $\sigma(z)=\frac{\overline{z}}{1-|z|^2}$ in Theorem \ref{thm-1.03},  we get the corresponding result obtained by Chen and Que \cite{Chen-Que-2017} (see Theorem \ref{thm-Chen-Que-2017-2}).}
\end{remark}

\section{Proof of  Theorem \ref{thm-1.00}}\label{sec-2}
\begin{proof}[\bf Proof of  Theorem \ref{thm-1.00}]
If $f=h+\overline{g}$ satisfied the condition \eqref{eq-1.04}, we know that
 \begin{align}
\left| {\sigma}  \frac{h^{\prime \prime}}{h^{\prime}}+ {\sigma^{2}} - {\sigma_{z}} \right|
&=\left| {\sigma}\left(\frac{h^{\prime \prime}}{h^{\prime}}-\frac{\overline{\omega} \omega^{\prime}}{1-|\omega|^{2}}\right) + {\sigma^{2}} - {\sigma_{z}} +   \frac{{\sigma}\overline{\omega}{\omega}^{\prime}}{1-|\omega|^{2}}\right|\notag \\
&\leq\left| {\sigma}P_f + {\sigma^{2}} - {\sigma_{z}}\right| + \left| \frac{{\sigma} \overline{\omega}{\omega}^{\prime}}{1-|\omega|^{2}}\right| \\
&\leq \left| {\sigma}P_f + {\sigma^{2}} - {\sigma_{z}}\right| + \left| \frac{{\sigma}{\omega}^{\prime}}{1-|\omega|^{2}}\right|\notag\\
&\leq |\sigma_{\overline{z}}|.\notag
\end{align}
Hence,  from Theorem \ref{thm-Ahlfors-1974-0}, we see that $h$ is univalent.

For $a \in \triangle$, we let  the function $f_{a}$ has the canonical decomposition
\begin{align}
f_{a}= f+a\overline{ f}
=h+{a} g+\overline{g}+a \overline{h}
= h_a+\overline{g_a}\quad(|a|<1).
\end{align}
By noting that $f_{a}$ is also a sense-preserving harmonic mapping and its the second complex dilatation is $\omega_{a}$   with
 $$\omega_{a}=\varphi_{a} \circ \omega,
$$
where $\varphi_{a}$ is the automorphism of $\triangle$ defined by
$$
\varphi_{a}(z)=\frac{\overline{a}+z}{1+a z}  \quad (z \in \triangle).
$$
It follows that
$$
\frac{\left|z \omega_{a}^{\prime}\right|}{1-\left|\omega_{a}\right|^{2}}=\frac{\left|z \varphi_{a}^{\prime}(\omega) \omega^{\prime}\right|}{1-\left|\varphi_{a}(\omega)\right|^{2}}=\frac{\left|z \omega^{\prime}\right|}{1-|\omega|^{2}} \cdot \frac{\left|\varphi_{a}^{\prime}(\omega)\right|\left(1-|\omega|^{2}\right)}{1-\left|\varphi_{a}(\omega)\right|^{2}}=\frac{\left|z \omega^{\prime}\right|}{1-|\omega|^{2}}.
$$
Since the Jacobian   $J_{f_{a}}$ of   $f_{a}$ is $(1-|a|^{2})J_{f}$, we see that $P_{f_{a}}=P_{f}$.
Therefore, $f_a$ satisfies the hypothesis of Theorem \ref{thm-Ahlfors-1974-0}. Then for any $a \in \triangle$, the function $h_a=h+a g$ is univalent. By Hurwitz's theorem, the functions $h+\varepsilon g$ are univalent for all $|\varepsilon|=1$, which implies that $h+\varepsilon\overline{g}$ is univalent (see \cite{Hernandez-Martin-2013-2}). We thus conclude that $f$ is univalent in $\triangle$.
\end{proof}

\vskip.20in
\section{Proof of  Theorem \ref{thm-1.01}}\label{sec-3}

\begin{proof}[\bf Proof of  Theorem \ref{thm-1.01}]
For every $\lambda \in \overline{\triangle}$, let $f_{\lambda}=h+\lambda \overline{g}$.  Then the second complex dilatation of $f_{\lambda}$ is $\omega_{\lambda}=\overline{\lambda} \omega$. Since $(1-|\lambda|)(1-|\omega|)\geq 0$, we obtain

 \begin{align}\label{eq-2.01}
&\left|\sigma P_{f_\lambda}+\sigma^{2}-\sigma_{z}\right| +  \frac{|\sigma \omega_{\lambda}^{\prime}|}{1-|\omega_{\lambda}|^2} \notag\\
&=\left|\sigma\left(\frac{h^{\prime \prime}}{h^{\prime}}-\frac{\overline{\omega_{\lambda}} \omega_{\lambda}^{\prime}}{1-\left|\omega_{\lambda}\right|^{2}}\right)+\sigma^{2}-\sigma_{z}\right|
  +\frac{\left|\sigma\omega^{\prime}_{\lambda}\right|}{1-|\omega_{\lambda}|^2}\notag \\
  &\leq\left|\sigma\left(\frac{h^{\prime \prime}}{h^{\prime}}-\frac{\overline{\omega} \omega^{\prime}}{1-\left|\omega\right|^{2}}\right)+\sigma^{2}-\sigma_{z}\right|+\left|\sigma\left(\frac{\overline{\omega} \omega^{\prime}}{1-\left|\omega\right|^{2}}- \frac{\overline{\omega_{\lambda}} \omega_{\lambda}^{\prime}}{1-\left|\omega_{\lambda}\right|^{2}} \right)\right|
  +\frac{\left|\sigma\omega^{\prime}_{\lambda}\right|}{1-|\omega_{\lambda}|^2}\\
  &=\left|\sigma P_{f}+\sigma^{2}-\sigma_{z}\right|+
  \frac{|\sigma||\omega||\omega^{\prime}|\left(1-|\lambda|^{2}\right)}{\left(1-|\omega|^{2}\right)\left(1-|\lambda|^{2}|\omega|^{2}\right)}
  +\frac{|\sigma||\lambda|\left|\omega^{\prime}\right|}{1-|\lambda|^{2}|\omega|^{2}}\notag \\
&=\left|\sigma P_{f}+\sigma^{2}-\sigma_{z}\right|+
\frac{|\sigma| |\omega^{\prime}| }{1-|\omega|^{2}}\frac{|\lambda|+|\omega|}{1+|\lambda||\omega|}\notag \\
&\leq \left|\sigma P_{f}+\sigma^{2}-\sigma_{z}\right| +  \frac{|\sigma \omega^{\prime}|}{1-|\omega|^2}. \notag
\end{align}
Therefore, for all $\lambda \in \overline{\triangle}$, if  $f$ satisfies the condition \eqref{eq-Ahlfors-1974-1}, we have

$$\left|\sigma P_{f_\lambda}+\sigma^{2}-\sigma_{z}\right| +  \frac{|\sigma \omega_{\lambda}^{\prime}|}{1-|\omega_{\lambda}|^2}\leq k |\sigma_{\overline{z}}|\quad(0\leq k<1).$$

If the second complex dilatation $\omega$   of $f$ satisfies  $\|\omega\|_{\infty}<1$, then for each $\lambda\in \overline{\triangle}$, by \cite[Theorem 1]{Hernandez-Martin-2013}, the mapping $\widetilde{f}_{\lambda}$  can be continuously extended to a homeomorphism
of $\overline{\triangle}$.  Moreover, $F_{\lambda}(z)$ given by \eqref{eq-1.06} is a   homeomorphism  extension  of ${f}_{\lambda}$.

Next, we shall estimate the maximal complex dilatation of the mapping $F_{\lambda}$. We divide it
into the following two cases.

${\bf C{ase}\ 1.}$ \  If $|z|<1$, then $ |\mu_{{F}_{\lambda}} |=|\lambda||\omega(z)| \leq |\lambda|\|\omega\|_{\infty}<1$ for $z\in \triangle$.

${\bf C{ase}\ 2.}$ \   If $|z|>1$, we make a reciprocal transformation  $w=1 / \overline{z}$ for some $\abs{z}< 1$, then
 \begin{align}\label{eq-2.05}
  \left|\mu_{F_\lambda}(w)\right|&=\left|\frac{h^{\prime}(z)+(U_{\lambda})_{z}(z)}{\lambda\overline{g^{\prime}(z)}+(U_{\lambda})_{\overline{z}}(z)}\right|\notag\\
  &=\left|\frac{h^{\prime}+\frac{h^{\prime \prime} \sigma-h^{\prime} \sigma_{z}}{\sigma^{2}}-\lambda\frac{\overline{g^{\prime}} \overline{\sigma_{\overline{z}}}}{\overline{\sigma}^{2}}}{\lambda\overline{g^{\prime}}+\lambda\frac{\overline{g^{\prime \prime}} \overline{\sigma}-\overline{g^{\prime}} \overline{\sigma_{z}}}{\overline{\sigma}^{2}}-\frac{h^{\prime} \sigma_{\overline{z}}}{\sigma^{2}}}\right| \notag\\
  &=\left|\frac{\sigma^{2}+\sigma \frac{h^{\prime \prime}}{h^{\prime}}-\sigma_{z}-\lambda{\overline{\omega}}\overline{\sigma_{\overline{z}}}\frac{\sigma^{2}}{\overline{\sigma}^{2}} \frac{\overline{h^{\prime}}}{h^{\prime}}}{\overline{\lambda}\sigma^{2} \omega+\overline{\lambda}\sigma\frac{g^{\prime \prime}}{h^{\prime}}- \overline{\lambda}\sigma_{z}\omega-\overline{{\sigma_{\overline{z}}}}\frac{\sigma^{2}}{\overline{\sigma}^{2}} \frac{\overline{h^{\prime}}}{h^{\prime}}}\right|\\
  &=\left|\frac{\frac{\sigma^{2}}{\sigma_{\overline{z}}}+\frac{\sigma}{\sigma_{\overline{z}}} \frac{h^{\prime \prime}}{h^{\prime}} -\frac{\sigma_{z}}{\sigma_{\overline{z}}}-\lambda\overline{\omega}\frac{\sigma^{2}}{\overline{\sigma}^{2}} \frac{\overline{\sigma_{\overline{z}}}}{\sigma_{\overline{z}}}\frac{\overline{h^{\prime}}}{h^{\prime}}}{\overline{\lambda}\frac{\sigma^{2}}{\sigma_{\overline{z}}} \omega+\overline{\lambda}\frac{\sigma}{\sigma_{\overline{z}}} \frac{g^{\prime \prime}}{h^{\prime}}-\overline{\lambda}\frac{\sigma_{z}}{\sigma_{\overline{z}}} \omega-\frac{\sigma^{2}}{\overline{\sigma}^{2}} \frac{\overline{\sigma_{\overline{z}}}}{\sigma_{\overline{z}}}\frac{\overline{h^{\prime}}}{h^{\prime}}}\right| \notag \\
&\leq \frac{| \frac{\sigma^{2}}{\sigma_{\overline{z}}}+\frac{\sigma}{\sigma_{\overline{z}}} \frac{h^{\prime \prime}}{h^{\prime
}}-\frac{\sigma_{z}}{\sigma_{\overline{z}}}|+|\lambda|\|\omega\|_{\infty}}{1-|\lambda|\left|\frac{\sigma^{2}}{\sigma_{\overline{z}}} \omega-\frac{\sigma_{z}}{\sigma_{\overline{z}}} \omega+\frac{\sigma}{\sigma_{\overline{z}}} \frac{g^{\prime \prime}}{h^{\prime }}\right|}.\notag
  \end{align}
On the one hand, since $f$ satisfies the condition \eqref{eq-1.05}, by the triangle inequality, we get

\begin{align}\label{eq-2.06}
\left|\frac{\sigma^{2}}{\sigma_{\overline{z}}}+\frac{\sigma}{\sigma_{\overline{z}}} \frac{h^{\prime \prime}}{h^{\prime}}-\frac{\sigma_{z}}{\sigma_{{\overline{z}}}}\right|
&=\left|\frac{\sigma}{\sigma_{\overline{z}}}\left(\frac{h^{\prime \prime}}{h^{\prime}}-\frac{\overline{\omega} \omega^{\prime}}{1-|\omega|^{2}}\right)+\frac{\sigma^{2}}{\sigma_{\overline{z}}}-\frac{\sigma_{z}}{\sigma_{\overline{z}}}+\frac{\sigma}{\sigma_{\overline{z}}} \frac{\overline{\omega}{\omega}^{\prime}}{(1-|\omega|^{2})}\right|\notag \\
&\leq\left|\frac{\sigma}{\sigma_{\overline{z}}}\left(\frac{h^{\prime \prime}}{h^{\prime}}-\frac{\overline{\omega} \omega^{\prime}}{1-|\omega|^{2}}\right)+\frac{\sigma^{2}}{\sigma_{\overline{z}}}-\frac{\sigma_{z}}{\sigma_{\overline{z}}}\right|+\left|\frac{\sigma}{\sigma_{\overline{z}}} \frac{\overline{\omega}{\omega}^{\prime}}{(1-|\omega|^{2})}\right|\notag\\
&\leq \left|\frac{\sigma}{\sigma_{\overline{z}}}P_f+\frac{\sigma^{2}}{\sigma_{\overline{z}}}-\frac{\sigma_{z}}{\sigma_{\overline{z}}}\right|+\left|\frac{\sigma}{\sigma_{\overline{z}}} \frac{\overline{\omega}{\omega}^{\prime}}{(1-|\omega|^{2})}\right|\\
&\leq  k-\left|\frac{\sigma}{\sigma_{\overline{z}}} \frac{{\omega}^{\prime}}{1-|\omega|^{2}}\right|+\|\omega\|_{\infty}\left|\frac{\sigma}{\sigma_{\overline{z}}} \frac{{\omega}^{\prime}}{(1-|\omega|^{2})}\right|\notag\\
&=k-\left(1-\|\omega\|_{\infty}\right)\left|\frac{\sigma}{\sigma_{\overline{z}}} \frac{{\omega}^{\prime}}{(1-|\omega|^{2})}\right|.\notag
\end{align}
On the other hand, we have $$g^{\prime \prime}=\left(\omega h^{\prime}\right)^{\prime}=\omega^{\prime} h^{\prime}+\omega h^{\prime \prime}.$$ Therefore, for all $z \in \triangle$, we obtain

\begin{align}\label{eq-2.07}
\left|\frac{\sigma^{2}}{\sigma_{\overline{z}}} \omega-\frac{\sigma_{z}}{\sigma_{\overline{z}}} \omega+\frac{\sigma}{\sigma_{\overline{z}}} \frac{g^{\prime \prime}}{h^{\prime}}\right|
&=\left|\frac{\sigma^{2}}{\sigma_{\overline{z}}} \omega-\frac{\sigma_{z}}{\sigma_{\overline{z}}} \omega+\frac{\sigma}{\sigma_{\overline{z}}} \frac{h^{\prime \prime}}{h^{\prime}}+\frac{\sigma}{\sigma_{\overline{z}}}\omega^{\prime}\right|\notag\\
&=\left|\frac{\sigma}{\sigma_{\overline{z}}}\left(\frac{h^{\prime \prime}}{h^{\prime}}-\frac{\overline{\omega} \omega^{\prime}}{1-|\omega|^{2}}\right)+\frac{\sigma^{2}}{\sigma_{\overline{z}}}\omega-
\frac{\sigma_{z}}{\sigma_{\overline{z}}}\omega+\frac{\sigma}{\sigma_{\overline{z}}} \frac{{\omega}^{\prime}}{(1-|\omega|^{2})}\right|\notag\\
&\leq \|\omega\|_{\infty}\left|\frac{\sigma}{\sigma_{\overline{z}}} P_{f}+ \frac{\sigma^{2}}{\sigma_{\overline{z}}}-\frac{\sigma_{z}}{\sigma_{\overline{z}}} \right|+\left|\frac{\sigma}{\sigma_{\overline{z}}} \frac{\omega^{\prime}}{(1-|\omega|^{2})}\right|\\
&\leq  k\|\omega\|_{\infty}-\|\omega\|_{\infty}\left|\frac{\sigma}{\sigma_{\overline{z}}} \frac{\omega^{\prime}}{(1-|\omega|^{2})}\right|+\left|\frac{\sigma}{\sigma_{\overline{z}}} \frac{\omega^{\prime}}{(1-|\omega|^{2})}\right|\notag\\
&=k\|\omega\|_{\infty}+\left(1-\|\omega\|_{\infty}\right)\left|\frac{\sigma}{\sigma_{\overline{z}}} \frac{\omega^{\prime}}{(1-|\omega|^{2})}\right|.\notag
\end{align}
By substituting \eqref{eq-2.06} and \eqref{eq-2.07} into \eqref{eq-2.05}, we know that
\begin{equation}\label{eq-2.08}
\begin{aligned}
\left|\mu_{F_\lambda}(w)\right| & \leq \frac{k-\left(1-\|\omega\|_{\infty}\right)\left|\omega^{*}(z)\right|+|\lambda|\|\omega\|_{\infty}}{1-\left(k|\lambda|\|\omega\|_{\infty}+
\left(1-\|\omega\|_{\infty}\right)\left|\omega^{*}(z)\right|\right)} \\
&=\frac{k+|\lambda|\|\omega\|_{\infty}-\left(1-|\lambda|\|\omega\|_{\infty}\right)\left|\omega^{*}(z)\right|}{1-k|\lambda|\|\omega\|_{\infty}-
\left(1-\|\omega\|_{\infty}\right)\left|\omega^{*}(z)\right|},
\end{aligned}
\end{equation}
where $$\omega^{*}(z)=\frac{\sigma}{\sigma_{\overline{z}}} \frac{\omega^{\prime}}{(1-|\omega|^{2})}.$$ Since $f$ satisfies the condition \eqref{eq-1.05}, we deduce that $|\omega^{*}(z)|\leq k$.

Define the function $\rho(x):[0, k] \rightarrow \mathbb{R}$ by
$$
\rho(x)=\frac{k+|\lambda|\|\omega\|_{\infty}-\left(1-\|\omega\|_{\infty}\right) x}{1-k|\lambda|\|\omega\|_{\infty}-\left(1-\|\omega\|_{\infty}\right) x}.
$$
Bearing in mind that \eqref{eq-Hernandez-Martin-2013-3} holds, we see that $\rho^{\prime}(x)<0$ for all $x \in[0, k].$ Thus,
$$
\rho(x) \leq \rho(0)=\frac{k+|\lambda|\|\omega\|_{\infty}}{1-k|\lambda|\|\omega\|_{\infty}}=\frac{K-1}{K+1}.
$$
We then find from \eqref{eq-2.08} that the inequality
$$\left|\mu_{F_\lambda}(w)\right| \leq \frac{K-1}{K+1}:=k_1$$
holds for all $|w|>1$. Since the assumption \eqref{eq-Hernandez-Martin-2013-3} shows that $$\left\|\omega_{\lambda}\right\|_{\infty}=|\lambda|\|\omega\|_{\infty}\leq \|\omega\|_{\infty}<\frac{1-k}{1+k}$$
 holds for all $\lambda \in \overline{\triangle}$,
 which implies that $k_{1}<1$. Therefore, $F_{\lambda}(w)$ is a quasiconformal mapping in $\mathbb{C} \backslash \overline{\triangle}$.

By  \cite[Lemma 6.1]{Lehto-1987}, we see that $$\left|\mu_{F_\lambda}(z)\right| \leq\frac{K-1}{K+1}$$ for all $|z| \neq 1$ in $\mathbb{C}$. Thus, the mapping $F_\lambda$ defined by \eqref{eq-1.06} is $K$-quasiconformal whenever \eqref{eq-2.08} holds. Moreover, by noting that $|\lambda|\|\omega\|_{\infty}\leq k_1<1$, we deduce that $F_{\lambda}$ is a $K$-quasiconformal mapping of $f_{\lambda}$ in $\mathbb{C}$, where $K$ is given by \eqref{eq-Chen-Que-2017-2}.
\end{proof}

\begin{remark}\label{rem-thm-1.01}
{\rm The method  to prove   $F_{\lambda}(z)$ given by \eqref{eq-1.06} is a    homeomorphism  extension  of $\mathbb{C}$ due to \cite{Hernandez-Martin-2013}. Here, we give the main key points.

 Firstly, we shall prove that for all $a \in \overline{\triangle}$, the functions $h_{a}=h+a g$ have a continuous and injective extension to $\mathbb{C}$. It follows from \eqref{eq-1.05} that for all $z \in \triangle$,
\begin{equation}\label{eq-2.04}
 \begin{aligned}
k|\sigma_{\overline{z}}| & \geq \left|\sigma P_{f_a}+\sigma^{2}-\sigma_{z}\right| +  \frac{|\sigma \omega_a^{\prime}(z)|}{1-|\omega_a(z)|^2} \\
  &=\left|\sigma\left(\frac{h^{\prime \prime}(z)+a g^{\prime \prime}(z)}{h^{\prime}(z)+a g^{\prime}(z)}-\frac{\overline{\omega_{a}(z)} \omega_{a}^{\prime}(z)}{1-\left|\omega_{a}(z)\right|^{2}}\right)+\sigma^{2}-\sigma_{z}\right|
  +\frac{\left|\sigma\omega^{\prime}_{a}(z)\right|}{1-|\omega_a(z)|^2}.
\end{aligned}
\end{equation}
In view of \eqref{eq-2.04} and the triangle inequality,  we see that
\begin{equation}\label{eq-2.02}
\begin{aligned}
\left|\sigma \frac{h_a^{\prime \prime}(z)}{h_a^{\prime}(z)}+\sigma^{2}-\sigma_{z}\right|
\leq
  &\left|\sigma\left(\frac{h^{\prime \prime}(z)+a g^{\prime \prime}(z)}{h^{\prime}(z)+a g^{\prime}(z)}-\frac{\overline{\omega_{a}(z)} \omega_{a}^{\prime}(z)}{1-\left|\omega_{a}(z)\right|^{2}}\right)+\sigma^{2}-\sigma_{z}\right|
  +\left|\frac{\sigma\overline{\omega_{a}(z)}\omega^{\prime}_{a}(z)}{1-|\omega_{a}(z)|^2}\right| \\
\leq &\left|\sigma P_{f_{a}}(z)+\sigma^{2}-\sigma_{z}  \right|+\left|\frac{\sigma\omega^{\prime}_{a}(z)}{1-|\omega_a(z)|^2}\right|  \\ \leq & k |\sigma_{\overline{z}}|.
\end{aligned}
\end{equation}
By virtue of \eqref{eq-Ahlfors-1974-1}, it implies that for each $a \in \triangle$,
the function $h_{a}$ is univalent and can be extended to a continuous and injective mapping $\widetilde{h}_{a}$ in $\overline{\triangle}$. Moreover, in \cite{Ahlfors-1974}, it was shown that the function
\begin{equation}\label{eq-2.03}
H_{a}(z)=
 \begin{cases}
 \widetilde{h}_{a}(z) & (|z| \leq 1), \\
  h_{a}\left(\frac{1}{\overline{z}}\right)+u_{a}\left(\frac{1}{\overline{z}}\right) & (|z|>1), \ \
 \end{cases}
\end{equation}
is a $K$-quasiconformal extension of $\mathbb{C}$ onto itself with $K=(1+k) /(1-k)$, where $$u_{a}(z)=\frac{h_{a}^{\prime}(z) }{{\sigma(z)}} \quad (z\in \triangle \backslash\{0\}).$$ Hence,  $H_{a}$ is continuous and univalent in $\mathbb{C}$.

Define
$$
H_{\varepsilon}(z)=\lim _{r \rightarrow 1^{-}} H_{r e^{i \theta}}(z)  \quad (z \in \mathbb{C};\,\varepsilon=e^{i \theta} \in \partial \triangle).$$
 According to \cite[Theorem 1]{Hernandez-Martin-2013}, we show that $H_{\varepsilon}$ is continuous and one-to-one in $\mathbb{C}$ for all $|\varepsilon|\leq 1$.

Next,  we assume that $H=H_{0}$ and $H_{1}$ are the corresponding extension of $h_{0}=h$ and $h_{1}=h+g$, respectively. By virtue of \eqref{eq-2.03},
we define  the function
$$
G(z)=H_{1}(z)-H(z)
$$
 to obtain a continuous extension $G$ of $g$ to $\mathbb{C}$, which was given by
\begin{equation*}
G(z)= \begin{cases}
       g(z)  & (|z|<1), \\
        \widetilde{g}(z)=\widetilde{h}_{1}(z)-\widetilde{h}(z)  & (|z|=1), \\ g\left(\frac{1}{\overline{z}}\right)+V\left(\frac{1}{\overline{z}}\right)  & (|z|>1),
       \end{cases}
\end{equation*}
where
$$
V(z)=\frac{g^{\prime}(z)}{\sigma(z)}  \quad (z \in \triangle \backslash\{0\}).
$$
Since $H_{a}$ are univalent in $\mathbb{C}$ for all $a \in \overline{\triangle}$, for each $z \in \mathbb{C}$,  we see that $H(z)+a G(z)$ is univalent  in $\mathbb{C}$.
Now,  we construct an explicit candidate for a continuous and injective extension of $f$ to $\mathbb{C}$  defined by
\begin{equation*}
F_{\lambda}(z)=H(z)+\lambda\overline{G(z)}=
    \begin{cases}
           f_{\lambda}(z)  & (|z|<1), \\
           \widetilde{h}(z)+\lambda\overline{\widetilde{g}}(z)  & (|z|=1), \\ f_{\lambda}\left(\frac{1}{\overline{z}}\right)+U\left(\frac{1}{\overline{z}}\right)  & (|z|>1),
    \end{cases}
\end{equation*}
where $U$ is given by
$$
U(z)=\frac{h^{\prime}(z)}{\sigma(z)}+\lambda\frac{\overline{g^{\prime}(z)}}{\overline{\sigma(z)}}\quad(z\in\triangle \backslash\{0\}).
$$

Finally,
by using the similar method
as in \cite{Hernandez-Martin-2013}, we can prove that $F_{\lambda}$ is univalent in $\mathbb{C}$, and $F_{\lambda}$ is a homeomorphism of $\mathbb{C}$ onto itself.}

\end{remark}

\vskip.20in
\section{Proof of Theorem \ref{thm-1.02}}\label{sec-4}

The following three lemmas are crucial to prove Theorem \ref{thm-1.02}.
\begin{lemma}\label{lem2.03}
{\rm (}\cite{Chen-Que-2017}{\rm )} Let $\varepsilon \in \triangle$ and $T(z)=(z+|\varepsilon|) /(1+|\varepsilon| z)$. Then $T(z)$ is a M\"{o}bius transformation of the unit disk $\triangle$ onto itself and
$$
\frac{\|\varepsilon|-| z||}{1-|\varepsilon||z|} \leq|T(z)| \leq \frac{|\varepsilon|+|z|}{1+|\varepsilon||z|}.
$$
\end{lemma}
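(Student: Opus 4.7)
The plan is a direct computational proof. First, the claim that $T$ is a Möbius automorphism of $\triangle$ is immediate: since $|\varepsilon| \in [0,1)$, the map $T(z) = (z + |\varepsilon|)/(1 + |\varepsilon| z)$ has the standard form $(z - a)/(1 - \overline{a} z)$ with $a = -|\varepsilon| \in \triangle$, so it is a conformal bijection of $\triangle$ onto itself that extends homeomorphically to $\overline{\triangle}$.

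For the bounds on $|T(z)|$, I would expand the modulus squared as
$$
|T(z)|^{2}=\frac{|z+|\varepsilon||^{2}}{|1+|\varepsilon|z|^{2}}=\frac{|z|^{2}+2|\varepsilon|\Re(z)+|\varepsilon|^{2}}{1+2|\varepsilon|\Re(z)+|\varepsilon|^{2}|z|^{2}},
$$
using that $|\varepsilon|$ is real. Fix $|z|=r\in[0,1]$ and regard the right-hand side as a function of the single variable $s:=2|\varepsilon|\Re(z)$, which ranges over $[-2|\varepsilon|r,\,2|\varepsilon|r]$ as $z$ varies on the circle $|z|=r$. A one-line derivative calculation yields
$$
\frac{d}{ds}\!\left(\frac{r^{2}+s+|\varepsilon|^{2}}{1+s+|\varepsilon|^{2}r^{2}}\right)=\frac{(1-r^{2})(1-|\varepsilon|^{2})}{(1+s+|\varepsilon|^{2}r^{2})^{2}}\ge 0,
$$
so $|T(z)|^{2}$ is monotonically nondecreasing in $\Re(z)$ along each circle $|z|=r$.

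Consequently, the maximum of $|T(z)|^{2}$ is attained at $\Re(z)=r$ and the minimum at $\Re(z)=-r$. Substituting these extremal values and factoring both numerator and denominator as perfect squares gives
$$
|T(z)|^{2}_{\max}=\frac{(r+|\varepsilon|)^{2}}{(1+|\varepsilon|r)^{2}},\qquad |T(z)|^{2}_{\min}=\frac{(r-|\varepsilon|)^{2}}{(1-|\varepsilon|r)^{2}}.
$$
Taking square roots (using $|r-|\varepsilon||$ in the lower bound so the root is unambiguous) and substituting $r=|z|$ produces exactly the two inequalities in the statement. There is no substantive obstacle; the only detail to watch is the absolute value around $|\varepsilon|-|z|$ in the lower bound, which is automatically provided by the square-root step.
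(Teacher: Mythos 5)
Your proof is correct. Note that the paper itself gives no argument for this lemma at all --- it is quoted verbatim from Chen and Que \cite{Chen-Que-2017} and used as a black box --- so there is no in-paper proof to compare against. Your direct computation is a complete and valid verification: the identification $T(z)=(z-a)/(1-\overline{a}z)$ with $a=-|\varepsilon|\in\triangle$ settles the automorphism claim, and the monotonicity of
$$
\frac{r^{2}+s+|\varepsilon|^{2}}{1+s+|\varepsilon|^{2}r^{2}}
$$
in $s=2|\varepsilon|\Re(z)$ (the derivative $(1-r^{2})(1-|\varepsilon|^{2})/(1+s+|\varepsilon|^{2}r^{2})^{2}\ge 0$ is right, and the denominator $|1+|\varepsilon|z|^{2}$ is strictly positive) correctly locates the extrema at $z=\pm r$, where the perfect-square factorizations give exactly the stated bounds after taking square roots. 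The only point worth making explicit is the one you already flagged: the lower bound requires the absolute value $\bigl||\varepsilon|-|z|\bigr|$, which the square root supplies automatically. This is essentially the standard estimate for the modulus of a disk automorphism along circles $|z|=r$, and your write-up would serve as a self-contained proof of the cited lemma.
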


\begin{lemma}\label{lem2.01}
Let $f=h+\overline{g}$ be a sense-preserving harmonic mapping in $\triangle$ with complex dilatation $\omega \not \equiv 0.$ Assume that $\|\omega\|_{\infty}<1$ and that $f$ satisfies \eqref{eq-1.05}. Then, the analytic functions $h_{a}=h+a g$ are univalent in $\triangle$ for all $0 \leq|a|<\delta$, where
\begin{equation*}\label{eq-2.09}
1<\delta=\frac{1+k\|\omega\|_{\infty}}{k+\|\omega\|_{\infty}} \leq \frac{1}{\|\omega\|_{\infty}}.
\end{equation*}
Moreover, $h_{a}$ has a continuous and injective extension $\widetilde{h}_{a}$ to $\overline{\triangle}$.
\end{lemma}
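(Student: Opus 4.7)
The plan is to reduce Lemma \ref{lem2.01} to the analytic quasiconformal-extension criterion of Ahlfors (Theorem \ref{thm-Ahlfors-1974-1}) applied to the function $h_a$ with the very same auxiliary function $\sigma$. The hypothesis $|a|<\delta\leq 1/\|\omega\|_\infty$ ensures that $|a\omega(z)|<1$ pointwise in $\triangle$, so
\[
h_a'(z)=h'(z)\bigl(1+a\omega(z)\bigr)
\]
is zero-free and $h_a$ is locally univalent. A direct differentiation using $g'=\omega h'$ yields
\[
P_{h_a}=\frac{h''}{h'}+\frac{a\omega'}{1+a\omega}=P_h+\frac{a\omega'}{1+a\omega},
\]
and substituting $P_h=P_f+\overline{\omega}\omega'/(1-|\omega|^2)$ from \eqref{eq-Pf}, together with the algebraic simplification $\overline{\omega}(1+a\omega)+a(1-|\omega|^2)=a+\overline{\omega}$, gives the key identity
\[
\sigma P_{h_a}+\sigma^{2}-\sigma_{z}=\bigl(\sigma P_f+\sigma^{2}-\sigma_{z}\bigr)+\frac{\sigma\,\omega'\,(a+\overline{\omega})}{(1-|\omega|^{2})(1+a\omega)}.
\]

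Applying the triangle inequality and rewriting the hypothesis \eqref{eq-1.05} as $|\sigma P_f+\sigma^2-\sigma_z|\leq k|\sigma_{\overline{z}}|-|\sigma\omega'|/(1-|\omega|^2)$, I obtain
\[
\bigl|\sigma P_{h_a}+\sigma^{2}-\sigma_{z}\bigr|\,\leq\, k|\sigma_{\overline{z}}|+\frac{|\sigma\omega'|}{1-|\omega|^{2}}\left(\frac{|a+\overline{\omega}|}{|1+a\omega|}-1\right).
\]
The target is to dominate the right-hand side by $k'|\sigma_{\overline{z}}|$ for some $k'<1$: once this is achieved, Theorem \ref{thm-Ahlfors-1974-1} applied to $h_a$ delivers simultaneously the univalence of $h_a$ in $\triangle$ and the continuous injective extension $\widetilde{h}_a$ to $\overline{\triangle}$ claimed in the lemma.

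The main technical step, and the chief obstacle, is a sharp uniform bound on the ratio $R(\omega):=|a+\overline{\omega}|/|1+a\omega|$ over $|\omega|\leq\rho:=\|\omega\|_\infty$. Starting from the identity $|1+a\omega|^{2}-|a+\overline{\omega}|^{2}=(1-|a|^{2})(1-|\omega|^{2})$ and writing $a,\omega$ in polar form, an elementary monotonicity computation in the relative angle and in the modulus $|\omega|$ shows that
\[
R(\omega)\;\leq\;
\begin{cases}
\dfrac{|a|+\rho}{1+|a|\rho}\leq 1, & |a|\leq 1,\\[2mm]
\dfrac{|a|-\rho}{1-|a|\rho}, & 1<|a|<1/\rho,
\end{cases}
\]
with the second bound attained when $|\omega|=\rho$ and $\omega$ is anti-aligned with $a$. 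The condition $|a|<\delta=(1+k\rho)/(k+\rho)$ is algebraically equivalent to $k\cdot(|a|-\rho)/(1-|a|\rho)<1$, and combined with $|\sigma\omega'|/(1-|\omega|^{2})\leq k|\sigma_{\overline{z}}|$ from \eqref{eq-1.05} gives
\[
\bigl|\sigma P_{h_a}+\sigma^{2}-\sigma_{z}\bigr|\;\leq\;k'\,|\sigma_{\overline{z}}|\qquad\text{with}\qquad k':=k\cdot\max\!\left(1,\frac{|a|-\rho}{1-|a|\rho}\right)<1,
\]
which is exactly the hypothesis of Theorem \ref{thm-Ahlfors-1974-1}; applying it closes the argument. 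The inequalities $1<\delta\leq 1/\rho$ reduce to $(1-k)(1-\rho)\geq 0$ and $k(1-\rho^{2})\geq 0$. I note that the cruder triangle estimate $R\leq (|a|+|\omega|)/(1-|a||\omega|)$ would only yield the inferior bound $|a|<(1-k\rho)/(k+\rho)$, so the sharp angular optimization is essential to recover the claimed value of $\delta$.
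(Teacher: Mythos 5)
Your proposal is correct and follows essentially the same route as the paper: the identity $P_{h_a}=P_f+\omega'(a+\overline{\omega})/\bigl((1-|\omega|^2)(1+a\omega)\bigr)$, the triangle inequality against \eqref{eq-1.05}, the sharp M\"obius-type bound $|(\overline{\omega}+a)/(1+a\omega)|\leq(|a|-\|\omega\|_\infty)/(1-|a|\|\omega\|_\infty)$ for $1<|a|<\delta$ (the paper's Lemma \ref{lem2.03}), and the reduction to Theorem \ref{thm-Ahlfors-1974-1}. The only cosmetic difference is that you treat $|a|\leq 1$ and $1<|a|<\delta$ uniformly, whereas the paper defers the case $|a|\leq 1$ to its earlier remark.
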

\begin{proof}
In  Remark \ref{rem-thm-1.01}, we have found that $h_{a}$ has a continuous and injective extension $\widetilde{h}_{a}$ to $\overline{\triangle}$ for all $|a| \leq 1$. Next, we prove that $h_{a}$ has a continuous and injective extension $\widetilde{f_{a}}$ to $\overline{\triangle}$ for $1<|a|<\delta$.
By noting that
\begin{equation}\label{eq-2.10}
\frac{h_{a}^{\prime \prime}}{h_{a}^{\prime}}=\frac{h^{\prime \prime}}{h^{\prime}}+\frac{a \omega^{\prime}}{1+a \omega},
\end{equation}
in view of \eqref{eq-2.10} and the formula \eqref{eq-Pf} for pre-Schwarzian derivative of $f$,  we get
\begin{equation}\label{eq-2.11}
\begin{aligned}
\left|\frac{\sigma}{\sigma_{\overline{z}}}   \frac{h_{a}^{\prime \prime}}{h_{a}^{\prime}}+\frac{\sigma^{2}}{\sigma_{\overline{z}}}-\frac{\sigma_{z}}{\sigma_{\overline{z}}}\right| &\leq\left|\frac{\sigma}{\sigma_{\overline{z}}} P_{f}+\frac{\sigma^{2}}{\sigma_{\overline{z}}}-\frac{\sigma_{z}}{\sigma_{\overline{z}}}\right|+\left|\frac{\sigma}{\sigma_{\overline{z}}} \frac{\omega^{\prime}}{1-|\omega|^2}  \frac{\overline{\omega}+a}{1+a \omega}\right|\\
&\leq  k-\left|\frac{\sigma}{\sigma_{\overline{z}}}  \frac{\omega^{\prime}}{1-|\omega|^{2}}\right|+\left|\frac{\sigma}{\sigma_{\overline{z}}}  \frac{\omega^{\prime}}{1-|\omega|^{2}}\right|  \left|\frac{\overline{\omega}+a}{1+a \omega}\right|\\
&=k+  \left|\frac{\sigma}{\sigma_{\overline{z}}}   \frac{\omega^{\prime}}{1-|\omega|^{2}}\right|\left( \left|\frac{\overline{\omega}+a}{1+a \omega}\right|-1\right)\\
&\leq  k\left|\frac{\overline{\omega}+a}{1+a \omega}\right|.
\end{aligned}
\end{equation}
From Lemma \ref{lem2.03}, we have
\begin{equation}\label{eq-2.12}
\sup _{z \in \mathbf{D}}\left|\frac{\omega+\bar{a}}{1+a \omega}\right| \leq \frac{|a|-\|\omega\|_{\infty}}{1-\|\omega\|_{\infty}|a|}<\frac{1}{k}
\end{equation}
for $$1<|a|<\delta=\frac{1+k\|\omega\|_{\infty}}{k+\|\omega\|_{\infty}} \leq \frac{1}{\|\omega\|_{\infty}}.$$
Combining \eqref{eq-2.11}, \eqref{eq-2.12} with Theorem \ref{thm-Ahlfors-1974-1}, we conclude that $h_{a}$ is univalent in $\triangle$ and $h_{a}$ has a continuous and injective extension $\widetilde{f}_{a}$ to $\overline{\triangle}$ for all $0 \leq|a|<\delta$, where $$1<\delta=\frac{1+k\|\omega\|_{\infty}}{k+\|\omega\|_{\infty}} \leq\frac{1}{\|\omega\|_{\infty}}.$$
We thus complete the proof of Lemma \ref{lem2.01}.
\end{proof}

\begin{lemma}\label{lem2.02}
 Let $f=h+\overline{g}$ satisfy the hypothesis of Lemma \ref{lem2.01}. Assume in addition that both $h$ and $g$ are analytic functions in $\overline{\triangle}$. Then,
\begin{equation}\label{eq-2.13}
\sup _{\alpha, \beta \in \overline{\triangle}, \alpha \neq \beta}\left|\frac{g(\alpha)-g(\beta)}{h(\alpha)-h(\beta)}\right|
=\sup _{\alpha, \beta \in \overline{\triangle}, \alpha \neq \beta}\left|\frac{\lambda\overline{(g(\alpha)-g(\beta))}}{h(\alpha)-h(\beta)}\right|  \leq \frac{1}{\delta}=\frac{k+\|\omega\|_{\infty}}{1+k\|\omega\|_{\infty}}<1.
\end{equation}
\end{lemma}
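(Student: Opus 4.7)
The plan is to deduce the bound from the univalence of the one-parameter family $h_a = h+ag$ on the closed disk, which is exactly what Lemma~\ref{lem2.01} provides for every $|a|<\delta$. Because $h$ and $g$ are assumed analytic on $\overline{\triangle}$, the continuous and injective extension $\widetilde{h}_a$ furnished by Lemma~\ref{lem2.01} coincides with $h+ag$ on $\overline{\triangle}$, so $h+ag$ itself is one-to-one on $\overline{\triangle}$ for each $|a|<\delta$.

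Fix two distinct points $\alpha,\beta\in\overline{\triangle}$. The plan is to translate the injectivity of every $h_a$, $|a|<\delta$, into a statement about the quotient $(g(\alpha)-g(\beta))/(h(\alpha)-h(\beta))$. Concretely, $h_a(\alpha)\neq h_a(\beta)$ means
\[
(h(\alpha)-h(\beta))+a(g(\alpha)-g(\beta))\neq 0 \qquad (|a|<\delta).
\]
If $g(\alpha)=g(\beta)$, then the ratio vanishes and there is nothing to prove (note that $h(\alpha)\neq h(\beta)$ by the $a=0$ case). Otherwise, the above nonvanishing rewrites as
\[
-\frac{h(\alpha)-h(\beta)}{g(\alpha)-g(\beta)}\notin\{a:|a|<\delta\},
\]
so $|h(\alpha)-h(\beta)|/|g(\alpha)-g(\beta)|\ge \delta$. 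Taking reciprocals and then the supremum over distinct $\alpha,\beta\in\overline{\triangle}$ yields
\[
\sup_{\alpha\neq\beta}\left|\frac{g(\alpha)-g(\beta)}{h(\alpha)-h(\beta)}\right|\le \frac{1}{\delta}=\frac{k+\|\omega\|_{\infty}}{1+k\|\omega\|_{\infty}}<1,
\]
where the strict inequality comes from $\delta>1$ (already recorded in Lemma~\ref{lem2.01}).

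For the second supremum, I would simply observe that $|\overline{g(\alpha)-g(\beta)}|=|g(\alpha)-g(\beta)|$ and $|\lambda|\le 1$, so
\[
\left|\frac{\lambda\,\overline{g(\alpha)-g(\beta)}}{h(\alpha)-h(\beta)}\right|=|\lambda|\left|\frac{g(\alpha)-g(\beta)}{h(\alpha)-h(\beta)}\right|\le \frac{1}{\delta},
\]
which matches the first bound and gives the displayed (in)equality after taking the supremum.

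The only delicate point is conceptual rather than computational: one must justify that the conclusion of Lemma~\ref{lem2.01}, which only a priori gives injectivity of $\widetilde{h}_a$ on $\overline{\triangle}$, actually says $h+ag$ itself is injective on $\overline{\triangle}$. This is where the hypothesis that $h$ and $g$ are analytic on $\overline{\triangle}$ (rather than merely on $\triangle$) is used, since it forces $\widetilde{h}_a=h+ag$ on the boundary by continuity. Once this identification is in place the rest is an entirely formal manipulation of the family of univalent functions, so I do not expect any additional technical obstruction.
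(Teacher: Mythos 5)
Your proposal is correct and is essentially the argument the paper relies on: the paper omits the details and simply cites the corresponding lemma of Hu--Fan, whose method is exactly your deduction --- univalence of $h_a=h+ag$ on $\overline{\triangle}$ for all $|a|<\delta$ forces the root $-\,(h(\alpha)-h(\beta))/(g(\alpha)-g(\beta))$ to lie outside the disk of radius $\delta$, giving the bound $1/\delta$. Your handling of the two side points (identifying $\widetilde{h}_a$ with $h+ag$ on $\overline{\triangle}$ via analyticity on the closed disk, and noting that the displayed ``equality'' of the two suprema really only holds for $|\lambda|=1$ while the bound $\le 1/\delta$ holds for all $|\lambda|\le 1$) is sound.
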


\begin{proof}
By using the similar method as in \cite[Lemma 2]{Hu-Fan-2021}, we can prove Lemma \ref{lem2.02}. The details are omitted here.
\end{proof}

\begin{proof}[\bf Proof of  Theorem \ref{thm-1.02}]
Define   $$(f_{\lambda})_r(z)=f_{\lambda}(r z)=h(r z)+\lambda\overline{g(r z)}\quad (0<r<1),$$ where both $h$ and $g$ are analytic in $\overline{\triangle}$.
Each of these functions $f_{\lambda}$ with $|\lambda||\omega_{r}|=|\lambda||\omega(r z)|$, so that $$|\lambda|\|\omega_{r}\|_{\infty} =\|\omega_{r}\|_{\infty}\leq\|\omega\|_{\infty}<1.$$

If we   prove that the mapping $(f_{\lambda})_r$ can be extended to a $K$-quasiconformal mapping in $\mathbb{C}$, where $K$ does not depend on $r$. Then  from \cite[Theorem 5.3]{Lehto-Virtanen-1973},  we conclude that $f_{\lambda}$ can be   extended to a $K$-quasiconformal mappings in $\mathbb{C}$.

We note that if $f$ satisfies \eqref{eq-1.05}, then
\begin{equation*}
\begin{aligned}
\sup _{z \in \triangle}\left\{|\sigma P_{(f_{\lambda})_r}+\sigma^{2}-\sigma_{z} | +  \frac{|\sigma \omega^{\prime}|}{1-|\omega|^2}\right\}
& \leq r  \sup _{z \in \triangle}|\sigma P_{f_{\lambda}}+\sigma^{2}-\sigma_{z} |+r\sup _{z \in \triangle}\frac{|\sigma \omega^{\prime}|}{1-|\omega|^2}\\
&\leq r  \sup _{z \in \triangle}|\sigma P_{f}+\sigma^{2}-\sigma_{z} |+r\sup _{z \in \triangle}\frac{|\sigma \omega^{\prime}|}{1-|\omega|^2}\\
& \leq r k|\sigma_{\overline{z}}|\leq  k|\sigma_{\overline{z}}|.
\end{aligned}
\end{equation*}
Thus, $(f_{\lambda})_r$ satisfies the condition \eqref{eq-1.05}.

By Lemmas \ref{lem2.01} and \ref{lem2.02}, and use the similar method as in \cite[Theorem 3]{Hu-Fan-2021}, we get the assertion $(f_{\lambda})_r$  admits a $K$-quasiconformal reflection. By allowing $r \rightarrow 1$,   we deduce that  $f_{\lambda}$ has a
quasiconformal extension to $\mathbb{C}$. This completes the proof of Theorem \ref{thm-1.02}.
\end{proof}

\vskip.20in
\section{Proof of Theorem \ref{thm-1.03}}\label{sec-5}
\begin{proof}[\bf Proof of  Theorem \ref{thm-1.03}]
{\bf Step\ 1.}  We shall prove that for any $\alpha$ with $|\alpha|<1, f=h+\alpha \overline{h}$ is a harmonic Teichm\"{u}ller mapping in $\triangle$.

Assume that $f$ is a harmonic mapping in  $\triangle$ with a representation $f(z)=h(z)+\alpha \overline{h(z)}$, where $h$ is a locally univalent analytic function and $|\alpha|<1$. It follows that
$$
\frac{|\sigma \omega^{\prime}|}{1-|\omega|^2}=0.
$$
The Jacobian $J_{f}$ of $f$ is equal to $\left(1-|\alpha|^{2}\right) J_{f}$.
Hence,
\begin{equation}\label{eq-2.14}
P_{f}=\frac{\partial}{\partial z} \log \left(\left(1-|a|^{2}\right)\left|h^{\prime}(z)\right|^{2}\right)=\frac{h^{\prime\prime}}{h^{\prime}}=P_{h}.
\end{equation}
By \eqref{eq-Ahlfors-1974-0} and \eqref{eq-Ahlfors-1974-1}, the  Ahlfors's univalent and  extension criterion for a locally univalent analytic function shows that $h$ is univalent, and it can be extended to a continuous and injective mapping $\widetilde{h}(z)$ in $\overline{\triangle}$. Thus, for any $\alpha$ with $|\alpha|<1, f=h+\alpha \overline{h}$ is a harmonic Teichm\"{u}ller mapping in $\triangle$.

{\bf Step\ 2.} We will show that ${F}(z)$  is a homeomorphic extension of $f$ in $\mathbb{C}$ onto itself.
By virtue of \cite[Theorem 1]{Hernandez-Martin-2013} and \eqref{eq-2.14}, we have
\begin{equation}\label{eq-2.15}
\left|\sigma P_f+\sigma^{2}-\sigma_{z}\right| \leq k\left|\sigma_{\overline{z}}\right|.
\end{equation}
 Hence, $f=h+\alpha \overline{h}$ can be extended to a homeomorphism of $\mathbb{C}$ onto itself. Moreover, the homeomorphism can be constructed as $F_{\alpha}(z)$ given by \eqref{eq-1.11}.

{\bf Step\ 3.} We shall estimate the maximal dilatation of $F_{\alpha}$, and then show that $F_{\alpha}$ is a quasiconformal mapping in $\mathbb{C}$.
By the assumption $|\alpha|<1$, we now separate the argument into two cases:

{\bf Case\ 1.}  If $|z|<1$, then $\left|\mu_{F_{\alpha}}(z)\right|=|\alpha|<1.$

{\bf Case\ 2.} If $|z|>1$, we make a reciprocal transformation  $\zeta=1 / \overline{z}$ for $|z|<1$, then
\begin{align}\label{eq-uf}
\left|\mu_{F_{\alpha}}(\zeta)\right|&=\left|\frac{(F_{\alpha})_{\overline{\zeta}}}{(F_{\alpha})_{\zeta}}\right|
=\left|\frac{(F_{\alpha})_{z}  z_{\overline{\zeta}}+(F_{\alpha})_{\overline{z}} \overline{z_{\zeta}}}{(F_{\alpha})_{z}z_{\zeta}+(F_{\alpha})_{\overline{z}}  \overline{z_{\overline{\zeta}}}}\right|=\left|\frac{(F_{\alpha})_{z}}{(F_{\alpha})_{\overline{z}}}\right|\notag\\
  &=\left|\frac{h^{\prime}+\frac{h^{\prime \prime} \sigma-h^{\prime} \sigma_{z}}{\sigma^{2}}-\alpha\frac{\overline{h^{\prime}} \overline{\sigma_{\overline{z}}}}{\overline{\sigma}^{2}}}{\alpha\overline{h^{\prime}}+\alpha\frac{\overline{h^{\prime \prime}} \overline{\sigma}-\overline{h^{\prime}} \overline{\sigma_{z}}}{\overline{\sigma}^{2}}-\frac{h^{\prime} \sigma_{\overline{z}}}{\sigma^{2}}}\right| \notag\\
  &=\left|\frac{\sigma^{2}+\sigma \frac{h^{\prime \prime}}{h^{\prime}}-\sigma_{z}-\alpha\overline{\sigma_{\overline{z}}}\frac{\sigma^{2}}{\overline{\sigma}^{2}} \frac{\overline{h^{\prime}}}{h^{\prime}}}{\overline{\alpha}\sigma^{2} +\overline{\alpha}\sigma\frac{h^{\prime \prime}}{h^{\prime}}- \overline{\alpha}\sigma_{z}-\overline{{\sigma_{\overline{z}}}}\frac{\sigma^{2}}{\overline{\sigma}^{2}}\frac{\overline{h^{\prime}}}{h^{\prime}} }\right| \\
  &=\left|\frac{\frac{\sigma^{2}}{\sigma_{\overline{z}}}+\frac{\sigma}{\sigma_{\overline{z}}} \frac{h^{\prime \prime}}{h^{\prime}} -\frac{\sigma_{z}}{\sigma_{\overline{z}}}-\alpha\frac{\sigma^{2}}{\overline{\sigma}^{2}} \frac{\overline{\sigma_{\overline{z}}}}{\sigma_{\overline{z}}}\frac{\overline{h^{\prime}}}{h^{\prime}}}{\overline{\alpha}\frac{\sigma^{2}}{\sigma_{\overline{z}}} +\overline{\alpha}\frac{\sigma}{\sigma_{\overline{z}}} \frac{h^{\prime \prime}}{h^{\prime}}-\overline{\alpha}\frac{\sigma_{z}}{\sigma_{\overline{z}}} -\frac{\sigma^{2}}{\overline{\sigma}^{2}} \frac{\overline{\sigma_{\overline{z}}}}{\sigma_{\overline{z}}}\frac{\overline{h^{\prime}}}{h^{\prime}}}\right| \notag\\
  &=\left|\frac{\alpha+\frac{-\frac{\sigma^{2}}{\sigma_{\overline{z}}}
-\frac{\sigma}{\sigma_{\overline{z}}} \frac{h^{\prime \prime}}{h^{\prime}}
+\frac{\sigma_{z}}{\sigma_{\overline{z}}}}{\frac{\sigma^{2}}{\overline{\sigma}^{2}} \frac{\overline{\sigma_{\overline{z}}}}{\sigma_{\overline{z}}}\frac{\overline{h^{\prime}}}{h^{\prime}}}}{1+\overline{\alpha} \frac{-\frac{\sigma^{2}}{\sigma_{\overline{z}}}
-\frac{\sigma}{\sigma_{\overline{z}}} \frac{h^{\prime \prime}}{h^{\prime}}
+\frac{\sigma_{z}}{\sigma_{\overline{z}}}}{\frac{\sigma^{2}}{\overline{\sigma}^{2}} \frac{\overline{\sigma_{\overline{z}}}}{\sigma_{\overline{z}}}\frac{\overline{h^{\prime}}}{h^{\prime}}}}\right|.\notag
\end{align}

We now suppose that
$$\eta(z)= \frac{-\frac{\sigma^{2}}{\sigma_{\overline{z}}}
-\frac{\sigma}{\sigma_{\overline{z}}} \frac{h^{\prime \prime}}{h^{\prime}}
+\frac{\sigma_{z}}{\sigma_{\overline{z}}}}{\frac{\sigma^{2}}{\overline{\sigma}^{2}} \frac{\overline{\sigma_{\overline{z}}}}{\sigma_{\overline{z}}}\frac{\overline{h^{\prime}}}{h^{\prime}}}.
$$
From \eqref{eq-1.05}, we see that
\begin{equation}\label{eq-2.16}
|\eta(z)| \leq k \quad(z \in \triangle).
\end{equation}
By setting $\theta=\overline{\alpha} /|\alpha|$, it follows from \eqref{eq-uf}  that
$$
\left|\mu_{F_{\alpha}}(\zeta)\right|=\left|\frac{\alpha+\eta(z)}{1+\overline{\alpha} \eta(z)}\right|=\left|\frac{\theta \alpha+\theta \eta(z)}{1+\overline{\theta}\overline{\alpha} \theta \eta(z)}\right|=\left|\frac{|\alpha|+\theta \eta(z)}{1+|\alpha| \theta \eta(z)}\right|  \quad (\zeta \in \mathbb{C}\backslash \overline{\triangle}).
$$
By Lemma \ref{lem2.03} and \eqref{eq-2.16}, we know that
\begin{equation}\label{eq-2.17}
\left|\mu_{F_{\alpha}}(\zeta)\right| \leq \frac{|\alpha|+|\eta(z)|}{1+|\alpha \eta(z)|} \leq \frac{|\alpha|+k}{1+|\alpha| k}=k_{2} \quad (\zeta \in \mathbb{C} \backslash \overline{\triangle}).
\end{equation}
Therefore, $F_{\alpha}(\zeta)$ is a quasiconformal mapping in $\mathbb{C}\backslash \overline{\triangle}$.

By taking \begin{equation}\label{eq-2.18}
 f=m z+n \overline{z}\quad (|n|<|m|),
 \end{equation}
then $f$ satisfies the condition \eqref{eq-1.10} with $k=0$. Its quasiconformal extension given by \eqref{eq-1.11} is just of the form \eqref{eq-2.18}, and its maximal dilatation is $$k_{2}=\left|\frac{\overline{n}}{m}\right|=|\alpha|=\frac{K-1}{K+1},$$ which shows that the estimate \eqref{eq-2.17} is asymptotically sharp for $k$. The proof of Theorem \ref{thm-1.03} is thus completed.
\end{proof}




\vskip .20in
\begin{center}{\sc Acknowledgments}
\end{center}

\vskip.01in
The present investigation was supported by the \textit{Key Project of Education Department of Hunan Province} under Grant no.
19A097, and the \textit{Natural Science Foundation of Hunan Province} under Grant no. 2018JJ2074 of the P. R. China.

\vskip.12in

\end{document}